\newtheorem{theorem}{Theorem}[section]
\newtheorem{remark}[theorem]{Remark}
\newtheorem{proposition}[theorem]{Proposition}
\newtheorem{lemma}[theorem]{Lemma}
\newtheorem{corollary}[theorem]{Corollary}
\newtheorem{definition}[theorem]{Definition}
\def\H#1{{\bf #1}}
\newfont\bbf{msbm10 at 12pt}
\def\eps{\varepsilon}
\def\R{{\mathbb R}}
\def\C{{\mathbb C}}
\def\N{{\mathbb N}}
\def\F{{\mathcal F}}
\def\D{{\mathcal D}}
\def\H{{\mathcal H}}
\def\A{{\mathcal A}}
\def\T{{\mathcal T}}
\def\sm{\setminus}
\def\tag#1{\hfill \qquad  #1}
\def\bd{\partial }
\def\le{\leqslant}
\def\ge{\geqslant}
\newcommand{\essup}{{\rm ess\, sup}}
\newcommand{\essinf}{{\rm ess\, inf}}
\newcommand{\beq}{\begin{equation}}
\newcommand{\eeq}{\end{equation}}
\newcommand\restr[2]{{
  \left.\kern-\nulldelimiterspace 
  #1 
  \vphantom{\big|} 
  \right|_{#2} 
  }}
\def\rauz{\mathcal{R}}
\def\zoret{\mathcal{Z}}
\def\aalpha{a}
\def\bbeta{b}
\def\M{\mathcal{M}}
\begin{document}
\hspace{-4cm}
\title[Recurrence for IETs and the Teichm\"uller flow]{Recurrence statistics for the space of Interval Exchange maps and the Teichm\"uller flow on the space of translation surfaces}

\author{Romain Aimino}\address{
Dipartimento di Matematica, II Universit\`a di Roma (Tor Vergata), Via della Ricerca Scientifica, 00133 Roma, Italy
}
\email{\href{mailto:aimino@mat.uniroma2.it}{aimino@mat.uniroma2.it}}
\urladdr{\url{http://www.mat.uniroma2.it/~aimino/}}

\author{Matthew Nicol}\address{
Department of Mathematics,
University of Houston,
Houston Texas,
USA}
\email{\href{mailto:nicol@math.uh.edu}{nicol@math.uh.edu}}
\urladdr{\url{http://www.math.uh.edu/~nicol/}}

\author{Mike Todd}\address{
Mathematical Institute,
University of St Andrews,
North Haugh,
St Andrews,
Fife,
KY16 9SS,
Scotland}
\email{\href{mailto:m.todd@st-andrews.ac.uk}{m.todd@st-andrews.ac.uk}}
\urladdr{\url{http://www.mcs.st-and.ac.uk/~miket/}}

\thanks{RA was supported by Conseil R\'egional Provence-Alpes-C\^ote d'Azur, by the ANR-project {\em Perturbations}, by
the PICS (Projet International de Coop\'eration Scientifique), Propri\'et\'es statistiques des syst\`emes dynamiques d\'eterministes et al\'eatoires, with the University of Houston, n. PICS05968 and by the European Advanced Grant Macroscopic Laws and Dynamical Systems (MALADY) (ERC AdG 246953). Most of this work was done when RA was affiliated to Aix Marseille Universit\'e, CNRS, CPT, UMR 7332, 13288 Marseille, France and Universit\'e de Toulon, CNRS, CPT, UMR 7332, 83957 La Garde, France. MN was partially supported by NSF grant  DMS 1101315 and by the French CNRS with a {\em poste d'accueil} position  at the Center of Theoretical Physics in Luminy. MT was partially supported by NSF grant DMS 1109587. RA and MN would like to thank Huyi Hu for discussions on Quasi-H\"{o}lder space and aperiodicity. MN would like to thank Mark Pollicott for interesting and helpful discussions concerning Rauzy-Veech renormalization and Teichm\"uller flow. The authors wish to  thank Sandro Vaienti for  helpful remarks, encouragement and  many useful  discussions  concerning this work.}

\date{\today}
\maketitle

\begin{abstract}
In this note we show that  the transfer operator of a Rauzy-Veech-Zorich renormalization map acting on a  space of quasi-H\"older functions is quasicompact
and derive certain statistical recurrence properties for this map and its associated Teichm\"uller flow. We establish  Borel-Cantelli lemmas, Extreme Value statistics and return time statistics for the map and flow. Previous results have established quasicompactness
 in H\"older or analytic function spaces, for example the work of M.~Pollicott and T.~Morita.  The quasi-H\"older function space is particularly useful for 
 investigating return time statistics. In particular we establish the shrinking target property for nested balls in the setting of Teichm\"uller flow.
Our point of view, approach and terminology derive from the work of M.~Pollicott augmented by that of M.~Viana.  

\end{abstract}

\tableofcontents{}

\section{Background and notation}
\label{sec:background}

\subsection{Dynamical Borel-Cantelli Lemmas and other limit laws.}

Let $T \colon X \to X$ be  a measure-preserving transformation
of a probability space $(X, \mu)$. We assume $X$ is also a metric space equipped with a metric $d$.  Dynamical Borel-Cantelli lemmas concern the following
set of questions: suppose  $(A_n)$ is a sequence of sets
such that $\sum_{n} \mu (A_{n})=\infty$, does $T^{n} (x) \in A_{n}$ for infinitely many
values of $n$ for $\mu$ a.e.\ $x\in X$? One special example of this is the case 
where  $(A_{n})$ is a nested sequence of balls
about a point, a setting which is often called the shrinking target
problem.

We let $S_n =
\sum_{j=0}^{n-1} 1_{A_j} \circ T^j$ and $E_n = \int_X S_n \, d \mu = \sum_{j=0}^{n-1} \mu(A_j)$. The property $\lim_{n\to \infty}
\frac{S_n (x)}{E_n}=1$ for $\mu$ a.e.\ $x\in X$ is often called the
Strong Borel--Cantelli (SBC) property in contrast to the Borel--Cantelli (BC)
property that $S_n (x)$ is unbounded for $\mu$ a.e.\ $x\in X$.

In the setting of uniformly hyperbolic systems pioneering work has been
done by W. Philipp~\cite{Philipp}, Kleinbock and Margulis~\cite{Kleinbock_Margulis}, Chernov and
Kleinbock~\cite{Chernov_Kleinbock} and Dolgopyat~\cite{Dolgopyat} (for uniformly partially 
hyperbolic systems). 

More recently dynamical Borel-Cantelli results have been proved for certain non-uniformly hyperbolic systems
 by example by Kim~\cite{Kim}, Gou\"{e}zel~\cite{Gouezel}, Gupta et al~\cite{GNO} and  Haydn et al~\cite{HNPV}. These works have also 
 yielded some interesting counterexamples. 
In the context of flows, Maucourant~\cite{Maucourant} has proved the analogous
Borel Cantelli  property for nested balls in the setting of geodesic flows.  Athreya~\cite{Athreya} gives large deviation and quantitative recurrence results for the 
Teichm\"uller geodesic flow.

Related to Borel-Cantelli lemmas are logarithmic laws for the shrinking target problem.  These results concern the asymptotic scaling behavior given by the 
limit
\[
\lim_{r\to 0} \frac{\tau_r (x,y)}{\mu (B_r (y))},
\]
 where $\tau_r (x,y)=\min \{ n: d(T^n x,y)<r\}$ and $B_r (y)$ is a  ball of radius $r$ about $y\in X$.

Of  relevance  to our setting is work of Masur~\cite{Mas93}, who proved a logarithm type law for 
Teichm\"uller geodesic flow on the moduli space of quadratic differentials and work of Galatolo and Kim~\cite{Galatolo_Kim} who
obtain Borel-Cantelli like results for generic interval exchange transformations. Marchese~\cite{Marchese1, Marchese2} also obtained related results on the shrinking target problem for the Rauzy-Veech-Zorich algorithm, with applications to a generalization of the Khinchin theorem for interval exchange transformations. 
He also obtained logarithmic limit laws for  returns for Teichmuller flow on translation surfaces~\cite[Theorem 1,3]{Marchese2}. These quantitative results apply under a logarithmic scaling, unlike our results which apply to the unscaled flow.

Statistical properties of the Teichm\"{u}ller flow  and the Rauzy-Veech-Zorich map have been investigated thoroughly in recent years. Avila, Gou\"{e}zel and Yoccoz~\cite{Avila_Gouezel_Yoccoz} have shown that the decay of correlations for the flow is exponentially fast for H\"{o}lder observables. The corresponding problem for the Rauzy-Veech-Zorich map has been studied by Bufetov and Avila in~\cite{Bufetov} and~\cite{Avila_Bufetov}, where the decay was proven to be exponential as well. The main ingredient of the proof of the latter result was the construction of a Young Tower~\cite{Young} with an exponential tail of return times. Building upon this fact and  work of Melbourne and Nicol~\cite{Melbourne_Nicol05}, Pollicott~\cite{Pollicott} proved the almost sure invariance principle for H\"{o}lder observables, both for the flow and the map. The almost sure invariance principle is a strong reinforcement of the central limit theorem, which was previously established by Bufetov~\cite{Bufetov}, and has several consequences, such as the law of iterated logarithm and the arcsine law. The large deviations principle for H\"{o}lder observables follows also directly from the existence of an exponential Young tower and results of Melbourne and Nicol~\cite{Melbourne_Nicol08}.

We also establish  recurrence statistics such as Poisson limit laws and Extreme Value Laws (EVLs) for Teichm\"uller flow, but
 we leave the detailed description of these properties and results to Section~\ref{extreme}.

\subsection{Interval Exchange Transformations}
\label{ssec:IET}

In  this section we synthesize the basic model described by Viana in \cite{Via08} with  the framework developed by Pollicott~\cite{Pollicott} (see also~\cite{Morita}).  
Pollicott's short paper~\cite{Pollicott} is a  very clear account of the Rauzy-Veech-Zorich induction and renormalization  from the viewpoint of hyperbolic dynamics. 
We begin by defining our dynamical systems.  This starts with interval exchange transformations, in particular focussing on the formalism described by Viana.  We then move to the Rauzy-Veech induction and renormalisation; the Zorich induction and renormalisation; and finally the Morita-Pollicott renormalisation.   We will point out the minor differences with Pollicott's framework as we go along, but broadly speaking, the difference here is that our induced maps are \emph{first} returns.  We relate these dynamical systems to the Teichm\"uller flow on the space of translation surfaces later on.

Following \cite[Chaper 1]{Via08}, let $I\subset \R$ be an interval and $\{I_\aalpha:\aalpha\in \A\}$ a partition of $I$ into intervals indexed by a finite alphabet $\A$ with $d\ge 2$ symbols.  An \emph{interval exchange transformation} (IET) is a bijective map $f=f_{(\pi, \lambda)}:I\to I$ which is a translation of each subinterval $I_\aalpha$, preserves Lebesgue measure and is determined by the following combinatorial and metric data:

\begin{enumerate}[label=({\alph*}),  itemsep=0.0mm, topsep=0.0mm, leftmargin=7mm]
\item A pair $\pi=(\pi_0, \pi_1)$ of bijections $\pi_\eps:\A\to \{1, \ldots, d\}$ which describe the ordering of the subintervals $I_\aalpha$ before and after the action of $f$:
\begin{equation*}
\begin{pmatrix} \aalpha_1^0 &\aalpha_2^0 &\dots&\aalpha_d^0\\
\aalpha_1^1 &\aalpha_2^1 &\dots&\aalpha_d^1\\
\end{pmatrix}
\end{equation*}
where $\aalpha_j^\eps=\pi_\eps^{-1}(j)$ for $\eps\in \{0, 1\}$ and $j\in \{1, 2, \ldots, d\}$.

\item A vector $\lambda=(\lambda_\aalpha)_{\aalpha\in \A}$ of non-negative entries which represent the lengths of the subintervals $(I_\aalpha )_{\aalpha\in \A}$.
\end{enumerate}

We have a more detailed description of the intervals $I_\aalpha$ above which will be useful later: for $\eps\in \{0, 1\}$, let $I_\aalpha^{\pi_\eps}$ be the interval of length $\lambda_{\pi_\eps(\aalpha)}$ in position $\pi_\eps(\aalpha)$ in the interval $[0, \sum_{\aalpha}\lambda_a]$, where `position'  means starting at zero and counting to the right.

The transformation $p:=\pi_1\circ\pi_0^{-1}$ is called the \emph{monodromy invariant} of the pair $\pi=(\pi_0, \pi_1)$.  As Viana points out, we can always change our pair $\pi=(\pi_0, \pi_1)$ and rearrange the ordering of our lengths so that the resulting data $\pi'=(\pi_0', \pi_1')$ and  $\lambda'=(\lambda_\aalpha')_{\aalpha\in \A}$ represents the same IET as the one above, but with $\pi_0=id$.   Indeed, this is what is described in Pollicott's notes: moreover he always assumes that $\sum_\aalpha\lambda_\aalpha=1$.  However, the setup described here gives a slightly more complicated, but more flexible way for us to describe later dynamics.

The IET can now be described more explicitly as a translation.  For $\aalpha\in \A$, define 
$$w_\aalpha:=\sum_{\{\bbeta:\pi_1(\bbeta)<\pi_1(\aalpha)\}}\lambda_\bbeta-\sum_{\{\bbeta:\pi_0(\bbeta)<\pi_0(\aalpha)\}}\lambda_\bbeta.$$
Then
$$f_{(\pi, \lambda)}(x)=x+\sum_\aalpha w_\aalpha\cdot \mathbbm{1}_{I_\aalpha}(x).$$
Later it will be useful to think of the translation vector $w_a$ as $\sum_{\bbeta\in \A}\M_{ \aalpha\bbeta}\lambda_\bbeta$ where the $(a,b)$ entry of the matrix $\M$ is defined by
\begin{equation*}
\M_{\aalpha \bbeta} = \begin{cases}
 +1 & \text{ if } \pi_1(\bbeta)<\pi_1(\aalpha) \text{ and } \pi_0(\bbeta)>\pi_0(\aalpha),\\
-1 & \text{ if } \pi_1(\bbeta)<\pi_1(\aalpha) \text{ and } \pi_0(\bbeta)<\pi_0(\aalpha),\\
 \hspace{2mm} 0 & \text{ otherwise}.
 \end{cases}
 \end{equation*}

\subsection{Rauzy-Veech induction and renormalisation}

As is common for families of dynamical systems with parabolic-type behaviour, one way to proceed is to define a good renormalization scheme on the space of parameters.  In this setting this was pioneered by Masur and Veech.  Given a representative $(\pi, \lambda)$ of an IET, for $\eps\in \{0, 1\}$, let $\aalpha(\eps)$ denote the last symbol in the expression for $\pi_\eps$, i.e., $\aalpha(\eps)=\pi_\eps^{-1}(d)=\aalpha_d^\eps$.  Assuming the generic situation where $I_{\aalpha(0)}$ and $I_{\aalpha(1)}$ have different lengths, we say that
\begin{equation*}
(\pi, \lambda) \text{ has } \begin{cases}
\text{ type 0 if }& \lambda_{\aalpha(0)}>\lambda_{\aalpha(1)},\\
\text{ type 1 if }& \lambda_{\aalpha(0)}<\lambda_{\aalpha(1)}.
\end{cases}
\end{equation*}
Now set 
\begin{equation*}
J=  \begin{cases} I\sm f_{(\pi, \lambda)}(I_{\aalpha(1)})
&\text{if } (\pi, \lambda) \text{ has  type } 0,\\
I\sm I_{\aalpha(0)}
&\text{if } (\pi, \lambda) \text{ has  type } 1.
\end{cases}
\end{equation*}
Then the \emph{Rauzy-Veech induction} $\hat\T_0$ is defined as the first return by $f_{(\pi, \lambda)}$ to $J$.  Another way of viewing this, from which we see that we obtain a new IET of the form we started with (although with shorter total length of our intervals), is that $\hat \T_0(\pi, \lambda)=(\pi', \lambda')$ where, if $(\pi, \lambda)$ is type 0 then
\begin{equation*}
\begin{pmatrix} \pi_0'\\
\pi_1'\\
\end{pmatrix} =\begin{pmatrix} \aalpha_1^0&\dots \aalpha_{k-1}^0 &\aalpha_k^0 & \aalpha_{k+1}^0 &\dots&\dots& \aalpha(0)\\
\aalpha_1^1&\dots \aalpha_{k-1}^1 &\aalpha(0) & \aalpha(1) & \aalpha_{k+1}^1 &\dots&\aalpha_{d-1}^1\\
\end{pmatrix}
\end{equation*}
and  $\lambda'=(\lambda_\aalpha')_{\aalpha\in \A}$ for
$$\lambda_\aalpha'=\lambda_\aalpha \text{ for } \aalpha\neq\aalpha(0), \text{ and } \lambda_{\aalpha(0)}'= \lambda_{\aalpha(0)}- \lambda_{\aalpha(1)}.$$
Similarly, if 
 $(\pi, \lambda)$ is type 1 then
\begin{equation*}
\begin{pmatrix} \pi_0'\\
\pi_1'\\
\end{pmatrix} =\begin{pmatrix} \aalpha_1^0&\dots \aalpha_{k-1}^0 &\aalpha(1) & \aalpha(0) & \aalpha_{k+1}^0 &\dots& \aalpha_{d-1}^0\\
\aalpha_1^1&\dots \aalpha_{k-1}^1 &\aalpha_k^0 & \aalpha_{k+1}^0 &\dots &\dots&\aalpha(1)\\
\end{pmatrix}
\end{equation*}
and  $\lambda'=(\lambda_\aalpha')_{\aalpha\in \A}$ for
$$\lambda_\aalpha'=\lambda_\aalpha \text{ for } \aalpha\neq\aalpha(1), \text{ and } \lambda_{\aalpha(1)}'= \lambda_{\aalpha(1)}- \lambda_{\aalpha(0)}.$$

\begin{remark} \label{rmk:Theta}
This transformation on the set of lengths in $\R_+^{\A}$ can be expressed in terms of a matrix $\Theta$ given in (1.9) and (1.10) of \cite{Via08} and which consists only of 0s and 1s: in fact $\lambda'=\Theta^{-1 *}(\lambda)$ where $^*$ denotes the transpose.  $\Theta^{-1}$ is a non-negative matrix.
\end{remark}

We are interested in the set of $(\pi, \lambda)$ such that $\hat\T_0$ is defined for all time.  This occurs if and only if $(\pi, \lambda)$ satisfies the \emph{Keane condition}, which assumes  that
$$f_{(\pi, \lambda)}^n(\bd I_\aalpha)\neq \bd I_\bbeta \text{ for all } n\ge1 \text{ and } \aalpha, \bbeta\in \A \text{ with } \pi_0(\bbeta)\neq 1,$$ where $\bd I_\aalpha$ is the left endpoint of the subinterval $I_\aalpha$.
Moreover, if $(\pi, \lambda)$ satisfies the Keane condition then $f_{(\pi, \lambda)}$ is minimal (every $f_{(\pi, \lambda)}$-orbit is dense).
A pair $\pi=(\pi_0, \pi_1)$ is called \emph{reducible} if there exists $k\in \{1, \ldots, d-1\}$ such that $\pi_1\circ\pi_0^{-1}(\{1, \ldots, k\})=\{1, \ldots, k\}$.  In this case, $f_{(\pi, \lambda)}$ splits into two IETs with simpler combinatorics.  If $\pi$ is not reducible, we say it is \emph{irreducible}.  It can be shown that if $\lambda$ is rationally independent and $\pi$ is irreducible then $(\pi, \lambda)$ satisfies the Keane condition.  Keane conjectured that for fixed irreducible $\pi$, the map $f_{(\pi, \lambda)}$ was uniquely ergodic for almost-every $\lambda$.  
This conjecture was proved independently by Masur~\cite{Mas82} and Veech~\cite{Vee82}. The method
of proof of Veech was based on a renormalization scheme.

Given a fixed $d$, as above, we define the Rauzy class $\rauz = \rauz(\pi)$ of a pair $\pi$ as the set of all pairs $\pi'$ for which there exist $n\ge 0$, $\lambda$ and $\lambda'$ with $\hat\T_0^n(\pi, \lambda) = (\pi', \lambda')$. They form a partition of the set of all pairs $\pi$.  Thus we think of $\hat\T_0$ acting on sets $\rauz\times \R_+^{\A}$. For $d=2$ and $d=3$ there is a unique Rauzy class, but for $d\ge 4$ there is more than one.  Again we refer the reader to \cite[Chapter 1]{Via08} for a nice description of these.

The \emph{Rauzy-Veech renormalization map} $\T_0$ is simply the transformation $\hat\T_0$ renormalised so that the total length of the resulting interval is 1: thus the multiplying factor is 
$$\frac1{1-\lambda_{\aalpha(1-\eps)}} \text{ when } (\pi, \lambda) \text{ is type } \eps.$$ 
That is $\T_0(\pi, \lambda)=(\pi', \lambda'')$ where $\lambda''=\frac{\lambda'}{1-\lambda_{\aalpha(1-\eps)}}.$
Thus $\T_0$ acts on the  $(d-1)$ dimensional simplex 
\[
\Delta=\Delta_{\A}:=\{ \lambda=(\lambda_1,\ldots, \lambda_d): \lambda_i >0, \lambda_1+\ldots + \lambda_d=1\}.
\]

We define $|\lambda |=\sum_{j=1}^d \lambda_j$, then $\T_0$ has the form 
\[
\T_0 (\pi, \lambda)=\left(\pi', \frac{\Theta^{-1 *} \lambda}{|\Theta^{-1 *} \lambda |}\right)
\]
where $\Theta$ is the matrix defined in Remark \ref{rmk:Theta}.

Setting 
\begin{equation}
\Delta_{\pi, \eps}:=\left\{ \lambda\in \Delta_{\A}:\lambda_{\aalpha(\eps)}>\lambda_{\aalpha(1-\eps)}\right\} \text{ for } \eps\in \{0,1\},
\label{eq:Delta pi eps}
\end{equation}
$\T_0:\{\pi\}\times \Delta_{\pi, \eps}\mapsto \{\pi'\}\times \Delta$ is a bijection: a nice Markov property.  This also implies that $\Theta$ is constant on each $\{\pi\}\times \Delta_{\pi, \eps}$.

As in work of Veech \cite{Vee82} (see also Masur \cite{Mas82}), $\T_0$  has an absolutely continuous
and invariant ergodic measure (acim) $\mu_0$,  which is infinite. $\T_0$ is not uniformly hyperbolic.

\subsection{Zorich induction and renormalisation} 

Zorich  produced accelerated versions of the Rauzy-Veech maps discussed above in order to improve the expansion properties of the system and ultimately to find absolutely continuous invariant probability measures.  For this subsection we fix a Rauzy class $\rauz$.  Now take $\pi=(\pi_0, \pi_1)$ in this class and $\lambda\in \R_+^{\A}$ satisfying the Keane condition.  Then for each $k\ge 1$ write $(\pi^k, \lambda^k)=\hat\T_0^k(\pi, \lambda)$ and let $\eps^k$ denote the type of $(\pi^k, \lambda^k)$ and $\eps$ denote the type of $(\pi,\lambda)$.  Then $n_1=n_1(\pi, \lambda)$ is defined as the smallest $k$ such that $\eps^k\neq \eps$ and the \emph{Zorich  induction} is defined by
$$\hat \T_1(\pi,\lambda)=\hat\T_0^{n_1}(\pi, \lambda).$$
Similarly, the \emph{Zorich renormalisation}  $\T_1:\rauz\times \Delta \to \rauz\times \Delta$ is defined as $\T_1=\T_0^{n_1}$.
This map has a Markov partition into countably many domains.  Indeed, let
$$\Delta_{\pi, \eps, n}:=\{\lambda\in \Delta_{\pi, \eps}:\eps^1=\cdots =\eps^{n-1}=\eps\neq \eps^n\}.$$
Then for each $\pi\in \rauz$, $\T_1: \{\pi\}\times \Delta_{\pi, \eps, n}\mapsto \{\pi^n\}\times \Delta_{\pi^n, 1-\eps}$ is a bijection.  Moreover, 
$$\lambda^n=c_n\Theta^{-n *}(\lambda),$$
where $c_n>0$ and $\Theta^{-n *}$ depends only on $\pi, \eps, n$. Let also $\Delta_{\eps} = \cup_{\pi \in \rauz} \Delta_{\pi, \eps}$ and $\Delta_{1 - \eps} = \cup_{\pi \in \rauz} \Delta_{\pi, 1 -\eps}$.

\begin{theorem}[Zorich]
For a given Rauzy class $\rauz$, $\T_1$ has an absolutely continuous invariant probability measure $\mu_1$.  Moreover, for $\eps\in\{0,1\}$,  
$$\T_1^2: \Delta_{\eps}  \to \Delta_{\eps}$$
is mixing with respect to the restriction to $\Delta_{\eps}$ of the measure $2 \mu_1$. Similarly $$\T_1^2: \Delta_{1 - \eps}  \to \Delta_{1-\eps}$$
is mixing with respect to $2\mu_1$ restricted to $\Delta_{1 - \eps}$.
\end{theorem}

As already noted above, $\T_1( \Delta_{\eps})=  \Delta_{1 - \eps}$, so the absolutely continuous invariant probability measure (acip) $\mu_1$ is not mixing, but has two cyclic classes.  

\subsection{Morita-Pollicott renormalisation }
\label{ssec:MorPol}
A common approach (see \cite{Avila_Gouezel_Yoccoz, Morita, Pollicott}) is to consider a map $\T_2$ derived from $\T_1$ further by 
inducing by first return times on an element of a  dynamical partition with compact closure in the parameter space. $\T_2$ has the advantage that it is a  multidimensional piecewise expanding map.  The setup in Pollicott~\cite{Pollicott} is slightly different to that outlined here, but for most practical purposes, it is identical.

Recalling the definition of $ \Delta_{\pi, 0},  \Delta_{\pi, 1}$ from \eqref{eq:Delta pi eps}, let $$\mathcal{P}=\{ \{\pi\}\times \Delta_{\pi, 0}, \{\pi\}\times \Delta_{\pi, 1}: \pi \in \rauz \}$$ be the usual finite partition of $\rauz\times\Delta$ and define for $n\ge 1$ 
\[
\mathcal{P}_n:=\bigvee_{k=0}^{n-1} \T_1^{-k} \mathcal{P}.
\]
Pollicott's approach is to choose an $n_B>1$ and a partition element $B\in \mathcal{P}_{n_B}$ such that $B$ has compact closure $\bar{B}$ contained in the open simplex $\R \times \Delta$. In this case, $B$  is the image of an inverse branch of $\T_1^{n_B}$ which is a 
strict contraction for the Hilbert metric (see also \cite[Corollary 1.21]{Via08}). 
Define $n_2(\pi, \lambda)$ to be the first return time of $(\pi, \lambda)\in B$ to $B$ under $\T_1$, i.e.
\[
n_2(\pi, \lambda)= \inf \{ k>0: \T_1^k (\pi, \lambda) \in B \}.
\]
Then
$\T_2: B \to B$ is defined as the induced first return time map under $\T_1$,
\[
\T_2 (\pi, \lambda)= \T_1^{n_2 (\lambda,\pi)} (\pi, \lambda).
\]

\begin{remark}\label{rmk:B placement}
Note that for each element $(\pi, \lambda)\in \rauz\times \Delta$, with $\lambda$ satisfying the Keane condition, we can find such a $B$ containing $(\pi, \lambda)$.
\end{remark}

The set $B$ has a natural countable partition $\mathcal{Q}=\{B_i\}_{i \in \mathcal{I}}$ into sets on which $n_2(\pi, \lambda)$ is constant.  The map $\T_2: B_i\to B$ is a diffeomorphism for
each $i \in \mathcal{I}$~\cite[Lemma 3.1]{Morita}.  $B$ has a naturally defined $\T_2$-invariant measure, namely $\mu_2:=\frac{\mu_1|_{B}}{\mu_1(B)}$. The density $h_B$ of 
$\mu_2$ with respect to Lebesgue measure on $B$  is strictly positive \cite[Lemma 2.3]{Pollicott} and analytic~\cite[Corollary 5.1.1]{Pollicott}. Let $\mathcal{Q}_n:=\bigvee_{k=0}^{n-1} \T_2^{-k} \mathcal{Q}$.

We have the following expansion and distortion properties. 

\begin{proposition}\cite[Lemma 2.2]{Pollicott} \label{prop:pollicott}
There exist $C>1$ , $\theta>1$ and $D_1$, $D_2$ such that for any $n\ge 1$ and any $x,y$ in the same element of $Q \in \mathcal{Q}_n$:

(1) $d(\T_2^n x, \T_2^n y ) \ge C\theta^n d(x,y)$;

(2) $ \left| \log \left( \frac{Jac(\T_2^n) (x)}{Jac(\T_2^n) (y)}\right)\right| \le D_1 d(\T_2^n x, \T_2^n y)$;

(3) $\frac{1}{D_2} \le \mu_2 (A)|J ac(\T^n_2)(x)| \le D_2$ for all $x\in A\in \mathcal{Q}_n$;

(4) $\T_2^n : Q \to B$ is a diffeomorphism.

\end{proposition}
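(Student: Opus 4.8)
The plan is to derive Proposition~\ref{prop:pollicott} from the standard bounded-distortion theory for piecewise-expanding maps with countably many branches, applied to $\T_2 \colon B \to B$. The key structural input is that $\T_2$ is the first-return map of $\T_1$ to the set $B$, which was chosen inside a partition element of $\mathcal{P}_{n_B}$ on which the corresponding inverse branch of $\T_1^{n_B}$ is a strict contraction. Since $\T_1 = \T_0^{n_1}$ and each $\T_0$ acts on each $\{\pi\}\times\Delta_{\pi,\eps}$ by a projectivization $\lambda \mapsto A\lambda/|A\lambda|$ with $A$ having entries in $\{-1,0,1\}$, the branches of $\T_1$, and hence of every $\T_2^n$, are projective maps of the simplex with nonnegative integer inverse matrices $\Theta^{-n*}$. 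These are exactly the objects whose expansion and distortion are controlled by the classical Hilbert-metric / positive-matrix estimates.

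First I would establish (1). On an element $A \in \mathcal{Q}_n$, $\T_2^n$ equals some iterate $\T_1^{m}$ with $m \ge n$, restricted to a single branch, and it is a composition of $n$ maps each of which — because $B$ sits strictly inside a contracting branch domain of $\T_1^{n_B}$ — contracts the relevant region by a definite factor under the inverse. Equivalently, the return-time structure forces each return to $B$ to pass through the contracting block, so the inverse branch of $\T_2^n$ is a composition of $n$ maps each with operator norm bounded by a fixed $\rho < 1$ on the closure of $B$ (using that $\T_1$ is non-expanding in the Hilbert-type metric and the $n_B$-block is a strict contraction). Converting back to the Euclidean metric $d$ on the compact set $\overline{B}$, where the two metrics are bi-Lipschitz equivalent with constants depending only on $B$, yields $d(\T_2^n x, \T_2^n y) \ge C\theta^n d(x,y)$ with $\theta = \rho^{-1} > 1$ and $C$ absorbing the metric-comparison constants. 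I would cite \cite[Corollary 1.21]{Via08} and \cite[Lemma 2.2]{Pollicott} for the precise contraction statement rather than reprove it.

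Next, (2) is the bounded-distortion estimate: the Jacobian of $\T_2^n$ on a branch is $\lambda \mapsto |A_n\lambda|^{-d}$ up to constants (Jacobian of a projective map of the $(d-1)$-simplex), so $\log Jac(\T_2^n)(x) - \log Jac(\T_2^n)(y) = -d\big(\log|A_n x| - \log|A_n y|\big)$, and one controls this difference by the Hilbert-metric distance between $A_n x$ and $A_n y$, which is at most the Hilbert distance between $x$ and $y$ (projective maps are non-expanding), which in turn is $\lesssim d(\T_2^n x, \T_2^n y)$ via part (1) and the metric comparison on $\overline{B}$. This is the "$\log$-Lipschitz in the image point" form of distortion, giving the constant $D_1$. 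Then (3) follows by integrating: $\mu_2(A) = \int_A h_B \, d\Leb$, $h_B$ is bounded above and below by \cite[Lemma 2.3]{Pollicott}, so $\mu_2(A) \asymp \Leb(A) = \int_{\T_2^n A} |Jac(\T_2^n)|^{-1} \asymp |Jac(\T_2^n)(x)|^{-1} \mu_2(B)$ by (2), which bounds the ratio uniformly; since $\mu_2(B)=1$, this gives the two-sided bound with a uniform $D_2$.

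The main obstacle is verifying cleanly that the uniform contraction factor $\rho<1$ survives composition along \emph{return} blocks of arbitrary intermediate length $m \gg n$ — i.e. that excursions of $\T_1$ outside $B$ between consecutive returns do not destroy the per-return contraction. The point is that $\T_1$ (equivalently, each projective branch) is non-expanding in the appropriate projective metric on the whole simplex, so the excursion steps are "free" and only the forced passage through the strictly-contracting $n_B$-block near each return contributes; this is precisely the mechanism exploited by Morita and Pollicott, and I would lean on \cite[Lemma 2.2]{Pollicott} and \cite[Lemma 3.1]{Morita} for it, taking care that our first-return convention matches theirs up to the minor discrepancy already flagged in the text. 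The rest — the Jacobian formula for projective simplex maps, the bi-Lipschitz comparison of Hilbert and Euclidean metrics on $\overline{B}$, and the positivity/boundedness of $h_B$ — is routine and citable.
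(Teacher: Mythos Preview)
The paper does not give a proof of this proposition at all: it is stated as a direct citation of \cite[Lemma 2.2]{Pollicott} (with \cite[Lemma 3.1]{Morita} in the background), so there is no in-paper argument to compare your sketch against. Your outline is essentially the standard Pollicott--Morita argument and is in the right spirit: Hilbert-metric non-expansion of the projective branches together with a forced strict contraction coming from the choice of $B$ for (1), the explicit Jacobian formula for projective simplex maps for (2), and integration against the bounded positive density $h_B$ for (3).

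Two small comments. First, the placement of the contracting block: since $B\in\mathcal{P}_{n_B}$ is itself an $n_B$-cylinder, every point of $B$ \emph{begins} its $\T_1$-orbit with the fixed $n_B$-itinerary, so the strict contraction $I_B$ appears as the \emph{last} factor of each inverse branch of $\T_2$ (equivalently, the first $n_B$ forward steps), not ``near each return''. With that correction your factorisation $I_B\circ(\text{Hilbert non-expanding tail})$ goes through and gives the uniform rate $\rho^n$ for $\T_2^n$. Second, for (2) your Hilbert-metric detour via the forward matrix $A_n$ is workable but roundabout; it is cleaner to pass to the nonnegative inverse-branch matrix $M=A_n^{-1}$ and use directly that $|Mu|\asymp\|M\|$ for $u\in\overline B$ (precisely the Avila--Bufetov observation the paper itself invokes later in the proof of Lemma~\ref{Lip_decay}), which gives $|\log|Mu|-\log|Mv||\le C\,d(\T_2^n x,\T_2^n y)$ immediately.
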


\begin{remark}
Since there exists $c >0$ such that $c^{-1} \le h_B \le c$, we can also state the above point (3) using Lebesgue measure $m$ instead of $\mu_2$. (or more accurately, the product of the counting measure on $\rauz$ and Lebesgue measure on $\Delta$, even though we will always refer to this measure as Lebesgue)
\end{remark}

\subsection{Gibbs-Markov maps and their transfer operators} \label{subsection:gibbs}

The previous subsection motivates  a  more in depth study of  the following class of maps.

Let $(Y,d)$ be a compact metric space endowed with a probability measure $m$ with full support. Let $T : Y \to Y$ be a nonsingular measurable map.

We will say that $T$ is a Gibbs-Markov map if there exists a countable measurable partition $\mathcal{Q} = \{Y_i\}_{i \in \mathcal{I}}$ of $Y$ such that, if we denote by $\mathcal{Q}_n = \bigvee_{k=0}^{n-1} T^{-k} \mathcal{Q}$ the dynamical partition of $T^n$ and by $Jac(T^n)$ the jacobian of $T^n$ with respect to $m$ ( i.e. $m(T^nA) = \int_A Jac(T^n) \, dm$ for every subset $A \subset Y$ on which $T^n$ is injective), we have 

\begin{enumerate}

\item \label{assumption:bijection} $T^n  : Q \to Y$ is a bimeasurable bijection;

\item \label{assumption:expansion} $d(T^n x, T^n y) \ge C \theta^n d(x,y)$;

\item \label{assumption:distortion} $\left\vert \log \frac{Jac(T^n) (x)}{Jac(T^n)(y)} \right\vert \le D d(T^n x, T^n y)$;

\end{enumerate}

for all $n \ge 1$, all $Q \in \mathcal{Q}_n$ and all $x,y \in Q$, where $C,D >0$ and $\theta>1$ depend only on the map $T$.

It is well known such maps admit a spectral gap for their transfer operators on the space of H\"older functions. We will study spectral properties on a larger space which contains discontinuous functions, namely the Quasi-H\"older space, introduced by Keller \cite{Keller} and Saussol \cite{Saussol}. We recall the relevant definitions and properties, and refer to the aforementioned references for more details.

Let $\eps_0>0$, $0 < \alpha <1$ and $f : Y \to \mathbb{R}$ lie in $L^1_m(Y)$. We define the oscillation of $f$ on a Borel subset $S \subset Y$ by 
\[
\mbox{osc} (f,S)= {\essup}_S f -{\essinf}_S f.
\]
We define 
\[
|f|_{\alpha}:=\sup_{0<\eps\le \eps_0} \eps^{-\alpha}\int_Y \mbox{osc} (f, B_{\eps} (x) ) dm(x)
\]
and let $V_{\alpha}(Y) :=\{ f \in L^1_m (Y,\R): |f|_{\alpha} <\infty\}$.  This space is strictly larger than the space of H\"older functions of exponent $\alpha$ on $Y$ and in particular
contains characteristic functions of some measurable sets. If we define the norm $\|\cdot\|_{\alpha}:=|\cdot|_{\alpha}+\|\cdot\|_{L^1_m}$ then $V_{\alpha}(Y)$ is a Banach space.
Since $Y$ is compact, the space $V_{\alpha}(Y)$ is compactly embedded in $L^1_m (Y)$. Furthermore, $V_{\alpha}(Y)$ embeds continuously into $L^{\infty}_m(Y)$ and is a Banach algebra satisfying $|fg|_{\alpha} \le |f|_{\alpha} \|g\|_{\infty} + \|f \|_{\infty} |g|_{\alpha}$ for all $f,g \in V_{\alpha}(Y)$.

Note also that while $\| \cdot \|_{\alpha}$ depends on the choice of $\eps_0$, the space $V_{\alpha}(Y)$ does not, and two different $\eps_0$ give rise to two equivalent norms on $V_{\alpha}$.

Let $P$ denote the transfer operator of $T$ with respect to $m$. This is the $L^1$ adjoint of $T$ with respect to $L^\infty$, i.e. $\int_Y P \phi \, \psi \, dm = \int_Y \phi \, \psi \circ T \, dm$ for all $\phi \in L^1_m(Y)$ and $\psi \in L^{\infty}_m(Y)$.

The operator $P$ has the form $$P\phi(x) = \sum_{i \in \mathcal{I}} \frac{\phi(x_i)}{Jac(T)(x_i)},$$ where $x_i \in Y_i$ satisfies $T x_i = x$.

We will prove that the transfer operator $P$ of a Gibbs-Markov map $T$ is quasi-compact and admits a spectral gap on $V_\alpha(Y)$, from which it will follow exponential decay of correlations for $T$, for observables in $V_\alpha(Y)$. Our main tool will be a Lasota-Yorke type inequality (Lemma \ref{lem:LY}) and Hennion's theorem \cite{Hennion}. We refer to Baladi \cite{Baladi} for a systematic exposition of this approach.

The next technical lemma will also prove useful later. In order to state it, we need some more notations. For $Q \in \mathcal{Q}_n$, denote $I_{n, Q} : Y \to Q$ the inverse branch of the restriction of $T^n$ to $Q$. The transfer operator $P^n$ of $T^n$ has the form $$P^n \phi (x) = \sum_{Q \in \mathcal{Q}_n} g_n(I_{n,Q} x) \phi(I_{n,Q} x),$$ where $g_n = \frac{1}{Jac(T^n)}$.

Denote by $M_{n,Q}$ the operator defined on $L^1_m(Y)$ by $$M_{n,Q} \phi(x) = g_n(I_{n,Q} x) \phi(I_{n,Q} x).$$

\begin{lemma} \label{lem:mnq}
There exists $C > 0$ such that for any $n \ge 1$, $Q \in \mathcal{Q}_n$ and $\phi \in  V_{\alpha}(Y)$, we have $\| M_{n,Q} \phi \|_{L^1_m} = \int_Q | \phi | dm$ and $$\int_Y \mbox{osc}(M_{n,Q} \phi, B_{\eps}(x)) dm(x) \le C \int_Q \mbox{osc}(\phi, B_{c_{n,Q} \eps}(x)) dm(x) + C \eps \int_Q |\phi| dm, $$ where $c_{n,Q }$ is the Lipschitz constant of $I_{n,Q} : Y \to Q$.
\end{lemma}

\begin{proof}
The relation $\int_Y | M_{n,Q} \phi | dm = \int_Q  |\phi| dm$ follows from a change of variables. 

Observe that $\mbox{osc}(M_{n,Q} \phi, B_{\eps}(x)) = \mbox{osc}(g_n \phi,  I_{n,Q} B_{\eps}(x))$. Using \cite[Proposition 3.2 (iii)]{Saussol}, we have for all $x  \in Y$, $$ \mbox{osc}(M_{n,Q} \phi, B_{\eps}(x))  \le \mbox{osc}(\phi, I_{n,Q} B_{\eps}(x)) \underset{I_{n,Q} B_{\eps}(x)}\essup g_n + \mbox{osc}(g_n, I_{n,Q} B_{\eps}(x)) \underset{I_{n,Q} B_{\eps}(x)} \essinf |\phi|.$$

By the distortion control of assumption \ref{assumption:distortion}, we have $\underset{I_{n,Q} B_{\eps}(x)}\essup g_n \le C g_n(I_{n,Q} x)$ and, since $|e^t - e^s| \le |t -s| e^t$ for any $t, s \in \mathbb{R}$,
$$
\begin{aligned}
\mbox{osc}(g_n, I_{n,Q}B_{\eps}(x)) & \le \underset{{y,z \in I_{n,Q} B_\eps(x)}}\essup | \exp \log g_n(y) - \exp \log g_n(z) | \\ & \le \underset{{y,z \in I_{n,Q} B_\eps(x)}}\essup \left| \log \frac{g_n(y)}{g_n(z)} \right| g_n(y) \\ & \le D g_n(I_{n,Q}x) \underset{{y,z \in I_{n,Q} B_\eps(x)}}\essup  d(T^n y, T^nz) \\ & \le C g_n(I_{n,Q} x) \eps
\end{aligned}
$$
for some constant $C >0$. We also have $\mbox{osc}(\phi, I_{n,Q} B_{\eps}(x)) \le \mbox{osc}(\phi, B_{c_{n,Q} \eps}(I_{n,Q} x))$ and $\underset{I_{n,Q} B_{\eps}(x)}\essinf | \phi| \le | \phi( I_{n,Q} x) |$ for almost every $x \in Y$. Putting together all the above estimates, we get for almost every $x$, $$\mbox{osc}(M_{n,Q} \phi, B_{\eps}(x)) \le C \mbox{osc}(\phi, B_{c_{n,Q} \eps}(I_{n,Q}x)) g_n(I_{n,Q}x) + C \eps |\phi(I_{n,Q} x)| g_n(I_{n,Q} x).$$ After integration over $Y$, a change of variables finishes the proof.
\end{proof}

With this lemma, we can prove a Lasota-Yorke type inequality for $T$: 

\begin{lemma}\label{lem:LY}
If $\eps_0$ is sufficiently small then there exist $0<\eta<1$ and $C, D>0$ such that if $\phi \in V_{\alpha}(Y)$ then for all $n \ge 0$
\[
\|P^n \phi\|_{\alpha} \le C \eta^n \|\phi\|_{\alpha} +D \int_Y |\phi| dm.
\]
\end{lemma}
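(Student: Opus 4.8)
The plan is to establish the Lasota--Yorke (or Doeblin--Fortet) inequality of Lemma~\ref{lem:LY} by the standard Saussol--Keller-type estimates for piecewise expanding maps in the quasi-H\"older setting, adapted to the fact that $\T_2$ has a countable (rather than finite) Markov partition $\mathcal{Q}$ with uniformly good expansion and distortion as recorded in Proposition~\ref{prop:pollicott}. It suffices to prove the inequality for $n=1$ (with constant $\eta_1<1$ after possibly passing to an iterate, or directly with $\eta$ close to the inverse of the expansion constant), since iterating $P$ and summing the geometric series then yields the statement for all $n\ge0$; here one uses that $P$ is a bounded operator on $L^1_m$ and that $m$ and $\mu_2$ are comparable (via the remark that $c^{-1}\le h_B\le c$), so the $L^1$ terms can be written against $\mu_2$.

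First I would write $P\phi(x)=\sum_i \phi(x_i)/|Jac(\T_{2,i})(x_i)|$ and estimate $\mbox{osc}(P\phi, B_\epsilon(x)\cap B)$ by splitting, on each branch $\T_{2,i}:B_i\to B$, the oscillation of the product $\phi\cdot g_i$ (where $g_i=1/|Jac(\T_{2,i})|$) over the preimage $\T_{2,i}^{-1}(B_\epsilon(x)\cap B)$. Using the Banach-algebra-type bound $\mbox{osc}(\phi g_i, S)\le \mbox{osc}(\phi,S)\,\essup_S g_i + \mbox{osc}(g_i,S)\,\essinf_S|\phi|$ together with the distortion control (2) in Proposition~\ref{prop:pollicott}, which gives $\mbox{osc}(\log g_i, S)\le D_1\mbox{diam}(\T_{2,i}S)\le D_1\cdot 2\epsilon$, one bounds the second term by $D_1\cdot 2\epsilon$ times $\essinf |\phi|$ on the preimage. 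The key geometric point is that the preimage $\T_{2,i}^{-1}(B_\epsilon(x))$ is contained in a ball of radius $\le \epsilon/(C\theta)$ by the expansion estimate (1), so after integrating $dx$ over $B$, changing variables, and collecting the $\essup g_i$ and $\essinf g_i$ weights (whose sum over $i$ is controlled by $\|P1\|_\infty\le$ const, again via (3) and bounded distortion), the leading term contributes a factor $\le (C\theta)^{-\alpha}|\phi|_\alpha$ plus lower-order pieces. Choosing $\epsilon_0$ small makes the distortion-generated terms small, and one also has to absorb a boundary term coming from preimages $B_i$ that meet $B_\epsilon(x)$ only partially; this is handled exactly as in Saussol by the observation that for small $\epsilon$ such ``cut'' pieces are few and their total measure is $O(\epsilon)$, contributing a term bounded by (const)$\cdot\|\phi\|_{L^1_m}$ or, after a further small-$\epsilon_0$ argument, by $\eta|\phi|_\alpha + D\|\phi\|_{L^1_m}$. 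Combining everything yields $|P\phi|_\alpha\le \eta_0|\phi|_\alpha + D_0\|\phi\|_{L^1_m}$ with $\eta_0<1$ for $\epsilon_0$ small; replacing $\|\phi\|_{L^1_m}$ by a multiple of $\int_B|\phi|\,d\mu_2$ finishes the $n=1$ case.

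The main obstacle is the bookkeeping forced by the countable partition: one must verify that the constants coming from expansion, distortion, and the measure-comparison in Proposition~\ref{prop:pollicott} are genuinely \emph{uniform} over the infinitely many branches $B_i$, so that the sums $\sum_i \essup_{B_i} g_i$ and the multiplicity counts of balls $B_\epsilon(x)$ meeting several $\T_{2,i}(B_i)$ stay finite and $\epsilon$-controlled. Concretely, I expect to need: (i) that $\sum_i \essup g_i \le (1+\mbox{const}\,\epsilon_0)\,\|P1\|_\infty$ with $\|P1\|_\infty\le c\,D_2$ finite, which follows from (3); and (ii) a bound on the number of branch images that can intersect a single small ball near the (countably many) boundary pieces, which is where Pollicott's/Morita's geometry of the Rauzy--Veech cells $\Delta_{\pi,\eps,n}$ enters — one uses that these accumulate only in a controlled way. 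A secondary technical point is ensuring $B$ is, for the purposes of the oscillation integral, effectively a nice domain (the $\cap B$ in the definition of $|\cdot|_\alpha$); since $B$ is a finite union of simplices this is standard but should be acknowledged. Once uniformity is in hand, the rest is the routine Saussol computation and a geometric-series iteration.
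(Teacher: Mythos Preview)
Your approach is essentially the same as the paper's: both use the Saussol-type splitting $\mbox{osc}(\phi g,S)\le \mbox{osc}(\phi,S)\,\essup_S g+\mbox{osc}(g,S)\,\essinf_S|\phi|$ on each inverse branch, feed in the expansion and distortion from Proposition~\ref{prop:pollicott}, and recombine via a change of variables. The paper packages this as a separate lemma (Lemma~\ref{lem:mnq}) about the branch operators $M_{n,Q}\phi(x)=g_n(I_{n,Q}x)\,\phi(I_{n,Q}x)$ and works directly with $P^n$ for $n$ large rather than iterating the $n=1$ inequality; this is only a cosmetic difference, and the $L^1$ term is written against $m$ (the comparison with $\mu_2$ via $c^{-1}\le h_B\le c$ that you invoke is exactly right).

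Where you diverge from the paper is in the bookkeeping you flag as the ``main obstacle'', and here you are over-worrying. The boundary and multiplicity issues you raise do not occur: every branch $\T_{2,i}:B_i\to B$ is a diffeomorphism \emph{onto} $B$ (full-branch Markov, see \cite[Lemma~3.1]{Morita}), so for each $x\in B$ the inverse image $I_i(B_\epsilon(x)\cap B)$ lies entirely inside $B_i$ and there are no ``cut'' pieces in the sense of Saussol. Likewise one never needs a separate finiteness bound on $\sum_i\essup_{B_i} g_i$: after the change of variables $y=I_i x$ the weight $g_i(I_i x)\,dx$ becomes $dy$ on $B_i$, and summing over the disjoint $B_i$ simply reconstitutes $\int_B(\cdots)\,dy$. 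So the proof is shorter than you anticipate --- the full-branch structure kills precisely the two technical points you were preparing to fight.
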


\begin{proof}
Since $P^n$ is a contraction on $L^1_m(Y)$ (see for instance Baladi \cite{Baladi}), it is sufficient to estimate $|P^n \phi|_{\alpha}$.  We will next apply Lemma~\ref{lem:mnq} to this operator, first noting that by assumption \ref{assumption:expansion}, $c_{n,Q} \le C \theta^{-n} \le C$, where $\theta > 1$. 
Writing $P^n = \sum_{Q \in \mathcal{Q}_n} M_{n,Q}$ and summing all the relations from Lemma \ref{lem:mnq}, \cite[Proposition 3.2 (i)]{Saussol} then implies that 
\begin{eqnarray*}
\int_Y \mbox{osc}(P^n \phi, B_{\eps}(x)) dm(x) \le C \int_Y \mbox{osc}(\phi, B_{C \theta^{-n} \eps}(x))dm(x) + C \eps \| \phi \|_{L^1_m}  \\
\le C \eps^{\alpha} \left( \theta^{-\alpha n} |\phi|_{\alpha} + \eps_0^{1-\alpha} \| \phi \|_{L^1_m} \right),
\end{eqnarray*}
for all $0 < \eps \le \frac{\eps_0}{C} = \eps_1$, so that $C \theta^{-n} \eps \le \eps_0$ and the bound $$ \int_Y \mbox{osc}(\phi, B_{C \theta^{-n} \eps}(x))dm(x) \le C \eps^{\alpha} \theta^{- \alpha n} | \phi |_{\alpha}$$ holds.

This shows $\|P^n \phi\|_{\alpha, \eps_1} \le C \theta^{- \alpha n} \|\phi\|_{\alpha, \eps_0} + C \|\phi\|_{L^1_m}$, where we put the subscript $\eps_0$ or $\eps_1$ in the notation for the Quasi-H\"older norm to emphasize the fact it was defined using either $\eps_0$ or $\eps_1$, and concludes the proof since the two norms $\| . \|_{\alpha, \eps_0}$ and  $\| . \|_{\alpha, \eps_1}$ are equivalent.
 \end{proof}

Classical arguments then allow us to prove exponential decay of correlations in the Quasi-H\"older norm:

\begin{proposition} \label{prop:decay_gibbs} There exists an unique absolutely continuous probability measure $\mu$ which is $T$-invariant, and its density $h$ belongs to $V_{\alpha}(Y)$. Furthermore, we have\begin{itemize}
\item[(a)] $\left\|P^n \phi - \left( \int_Y \phi \, dm \right) h \right\|_{\alpha} \le C \theta^n \|\phi \|_{\alpha}$;
\item[(b)] $\left|\int_Y \phi \, \psi \circ T^n \, d\mu- \int_Y \phi \, d \mu \int_Y \psi \, d \mu \right| \le C \theta^n \|\phi\|_{\alpha} \|\psi\|_{L^1_{\mu}}$, 
\end{itemize}
for all $n \ge 1$, for all $\phi \in V_{\alpha}$ and $\psi \in L^1(\mu)$, for some constants $C >0$ and $\theta <1$ which depend only on the map $T$.

\end{proposition}

\begin{proof}

Lemma~\ref{lem:LY} implies by Hennion's theorem \cite{Hennion} that $P$ is quasi-compact and has an essential spectral radius strictly less than 1 when acting on the space $V_{\alpha}(Y)$. To prove (a), it is then sufficient to prove that $1$ is a simple eigenvalue of $P$, and that there is no other eigenvalue on the unit circle. Let then $\phi \in V_{\alpha}$ be an eigenvector of $P$ for the eigenvalue $\lambda \in \C$ with $| \lambda | = 1$. From standard results, see for instance Aaronson \cite{Aaronson}, we know that $P$ has an essential spectral radius strictly less than $1$ when acting on the space of Lipschitz functions. This shows that $\phi$ is itself Lipschitz continuous, and then $\phi$ is a multiple of $h$ and $\lambda = 1$.

We now prove point (b): $$\begin{aligned} \int_Y \phi \, \psi \circ T^n d\mu- \int_Y \phi \, d \mu \int_Y \psi \, d\mu &= \int_Y \phi h \, \psi \circ T^n \,  dm - \int_Y \phi \, d \mu \int_Y \psi \, d \mu \\ &= \int_Y \left( P^n(\phi h) - \int_Y \phi h \,dm) \, h \right) \psi \, dm. \end{aligned}$$ Then, $ \left|\int_Y \phi \, \psi \circ T^n \, d\mu - \int_Y \phi \, d \mu \int_Y \psi \, d \mu \right| \le \left\|P^n(\phi h) - (\int_Y \phi h \, dm) h \right\|_{L^{\infty}_m} \| \psi \|_{L^1_m}.$ By (a), we have that $ \left\|P^n(\phi h) - (\int_Y \phi h \, dm) h \right\|_{L^{\infty}_m} \le C \theta^n \| \phi \|_{\alpha}$ since $V_{\alpha}(Y)$ embeds into $L^{\infty}_m$ and is a Banach algebra. On the other hand, $\| \psi \|_{L^1_m} \le c^{-1} \| \psi \|_{L^1_{\mu}}$ where $c = \inf h$ is strictly positive by Lemma 4.4.1 in \cite{Aaronson}. This proves (b).
\end{proof}

\section{Borel-Cantelli Lemmas}

\subsection{Borel-Cantelli lemmas for Gibbs-Markov maps}

We first investigate Borel-Cantelli lemmas for the map $\T_2$. From Proposition \ref{prop:pollicott}, we know $\T_2$ is a Gibbs-Markov map, so we will present general results for this class of maps.

Our result for Gibbs-Markov maps is a a fairly straightforward consequence of earlier work (see for example~\cite[Theorem 2.1]{Kim},~\cite[Proposition 2.6]{GNO}) and the description of their transfer operators we give in the previous subsection.

\begin{proposition}\label{prop:SBC_T2} Let $T$ be a Gibbs-Markov map on the compact metric space $(Y,d)$, as in the previous subsection, with absolutely continuous invariant measure $\mu$.
Let $\{\phi_n\}$ be a sequence of positive functions on $Y$ such that there exists  a constant $K>0$ with $\|\phi_n\|_{\alpha} \le K$ for  all $n$. Let $E_n=\sum_{j=1}^n \mu (\phi_j)$ and suppose $E_n$ is unbounded.
Then
\[
\lim_{n\to \infty}\frac{1}{E_n}  \sum_{j=1}^n \phi_j \circ T^j (x) \to 1
\]
for $\mu$~a.e.\ $x\in Y$.
\end{proposition}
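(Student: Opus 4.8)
The plan is to follow the standard Borel--Cantelli machinery for systems with a spectral gap on a suitable function space, using the decay of correlations established in part (b) of the transfer operator estimates. Set $S_n(x) = \sum_{j=1}^n \phi_j\circ \T_2^j(x)$, so that $E_n = \mathbb{E}_{\mu_2}[S_n]$. The first step is to estimate the variance $\mathrm{Var}_{\mu_2}(S_n)$. Expanding, one gets
\[
\mathrm{Var}_{\mu_2}(S_n) = \sum_{i,j=1}^n \left( \mu_2(\phi_i\circ\T_2^i \cdot \phi_j\circ\T_2^j) - \mu_2(\phi_i)\mu_2(\phi_j)\right),
\]
and for $i \le j$ one uses $\mu_2(\phi_i\circ\T_2^i \cdot \phi_j\circ\T_2^j) = \mu_2\big( (\phi_i\cdot (\phi_j\circ \T_2^{j-i}))\circ \T_2^i\big) = \mu_2(\phi_i \cdot \phi_j\circ\T_2^{j-i})$ by invariance. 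Since $V_\alpha$ is a Banach algebra, $\phi_i \cdot \phi_j \circ \T_2^{j-i}$ is not directly amenable to (b) (the composition need not be in $V_\alpha$), so instead one applies (b) directly with test function $\psi = \phi_j$ and observable $\phi = \phi_i$: $|\mu_2(\phi_i \cdot \phi_j\circ\T_2^{j-i}) - \mu_2(\phi_i)\mu_2(\phi_j)| \le C_2\theta^{j-i}\|\phi_i\|_\alpha \|\phi_j\|_{L^1_{\mu_2}} \le C_2 K^2 \theta^{j-i}$, using the uniform bound $\|\phi_j\|_\alpha \le K$ (which controls $\|\phi_j\|_{L^1_{\mu_2}}$ via the continuous embedding $V_\alpha\hookrightarrow L^\infty_m$ and $h_B$ bounded). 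Summing over $i \le j$ and doubling gives $\mathrm{Var}_{\mu_2}(S_n) \le C' \sum_{j=1}^n \sum_{k\ge 0}\theta^k \le C'' n$. Hence $\mathrm{Var}(S_n) = O(n)$; in particular $\mathrm{Var}(S_n) \le C'' E_n$ only if $E_n \gtrsim n$, which need not hold, so one should keep the bound in the form $\mathrm{Var}(S_n) \le C'' n$.

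The second step is the Gál--Koksma / Sprindžuk-type strong law for sums of weakly correlated random variables (as in \cite[Theorem 2.1]{Kim} or \cite[Proposition 2.6]{GNO}): if $\mathrm{Var}(S_n - S_m) \le C(E_n - E_m)$ (or more generally $\le C\,\psi(E_n)$ with $\psi$ increasing and suitably controlled) for all $m<n$, then $S_n/E_n \to 1$ almost surely along a subsequence $n_\ell$ chosen so that $E_{n_\ell} \asymp \ell^2$ (or $\asymp 2^\ell$), and monotonicity of $S_n$ in $n$ (which holds here since $\phi_j \ge 0$) together with $E_{n_{\ell+1}}/E_{n_\ell}\to 1$ upgrades this to convergence along the full sequence. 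The slight subtlety is that our variance bound is in terms of $n$ rather than $E_n$; but since $E_n = \sum_{j\le n}\mu_2(\phi_j) \le n\sup_j \mu_2(\phi_j) \le Cn$ is also bounded above linearly, and $E_n \to \infty$, one can still run the argument: writing $\mathrm{Var}(S_n - S_m) \le C''(n-m)$ and choosing the subsequence along which $E_{n_\ell} \ge \ell^2$ one controls $\sum_\ell \mathrm{Var}(S_{n_\ell})/E_{n_\ell}^2 < \infty$ after a Cauchy--Schwarz / Chebyshev step, provided $n_\ell - n_{\ell-1}$ does not grow too fast; handling the general case cleanly is the main bookkeeping point.

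The third step is to invoke the cited abstract Borel--Cantelli statement directly rather than reprove it: \cite[Theorem 2.1]{Kim} and \cite[Proposition 2.6]{GNO} assert precisely that a uniform bound $\|\phi_n\|_\alpha \le K$ together with an exponential decay of correlations of the form (b) implies the SBC property $S_n/E_n \to 1$ whenever $E_n$ is unbounded. So the proof reduces to checking that the hypotheses of that abstract result are met in our setting: namely that $(\T_2, B, \mu_2)$ has exponential decay of correlations for observables in $V_\alpha$ against $L^1_{\mu_2}$ test functions — this is exactly part (b) proved above — and that $V_\alpha$ contains the $\phi_n$ with uniformly bounded norm, which is the hypothesis. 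I expect the only real work, beyond quoting, to be the variance estimate and the verification that the decay rate and norm used in (b) match the form required by the cited theorems (in particular, that correlation decay against $L^1_{\mu_2}$, not merely $L^\infty$ or $L^1_m$, is what is available and what is needed); the main obstacle is therefore the mild mismatch between ``variance $O(n)$'' versus ``variance $O(E_n)$'' and ensuring the Gál--Koksma argument still produces the full-sequence limit. I would then simply write: \emph{This follows from the decay of correlations (b) together with the uniform bound $\|\phi_n\|_\alpha \le K$ by the argument of \cite[Theorem 2.1]{Kim} (see also \cite[Proposition 2.6]{GNO})}, after spelling out the variance bound above.
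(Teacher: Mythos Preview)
Your approach is the same as the paper's: use the decay of correlations (b) to bound the off-diagonal covariances and feed this into the Gal--Koksma/Sprindzuk lemma (stated in the paper as Proposition~\ref{prop:sprindzuk} and applied with $f_k=\phi_k\circ\T_2^k$, $g_k=h_k=\mu_2(\phi_k)$). The ``mild mismatch'' you worry about is self-inflicted, and the fix is one line. In your covariance estimate you bounded $\|\phi_j\|_{L^1_{\mu_2}}$ by a constant via the embedding $V_\alpha\hookrightarrow L^\infty$, obtaining $\mathrm{Var}(S_n-S_m)=O(n-m)$. Don't do that: since $\phi_j\ge 0$ one has $\|\phi_j\|_{L^1_{\mu_2}}=\mu_2(\phi_j)$, so the off-diagonal sum is
\[
\sum_{m<i<j\le n} C_2\,\theta^{j-i}\,\|\phi_i\|_\alpha\,\mu_2(\phi_j)\ \le\ C_2K\sum_{m<j\le n}\mu_2(\phi_j)\sum_{k\ge 1}\theta^{k}\ \le\ C(E_n-E_m),
\]
and the diagonal terms contribute at most $\sum_{m<i\le n}\mu_2(\phi_i^2)\le CK(E_n-E_m)$ by the same embedding. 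This is exactly condition $(*)$ of Sprindzuk with $g_k=h_k=\mu_2(\phi_k)$, so $S_n/E_n\to 1$ a.e.\ follows directly, with no subsequence bookkeeping needed. Your weaker bound $\mathrm{Var}(S_n-S_m)=O(n-m)$ is genuinely insufficient when $E_n$ grows slowly (say $E_n\sim\log n$): taking $h_k$ constant in Sprindzuk yields an error term $O(n^{1/2}\log^{3/2+\eps}n)$ which swamps $E_n$.
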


The proof of this proposition, given below, is an easy consequence of a Gal-Koksma type law. 
We formulate this law as  a proposition of W. Schmidt~\cite{W1,W2} as stated
by Sprindzuk~\cite{Sprindzuk}:

\begin{proposition}\label{prop:sprindzuk}
  Let $(\Omega,\mathcal{B},\mu)$ be a probability space and let $f_k
  (\omega) $, $(k=1,2,\ldots )$ be a sequence of non-negative $\mu$
  measurable functions and $g_k$, $h_k$ be sequences of real numbers
  such that $0\le g_k \le h_k \le 1$, $(k=1,2, \ldots,)$.  Suppose
  there exists $C>0$ such that
  \begin{equation} \label{eq:sprindzuk}
    \tag{$*$} \int \left(\sum_{m<k\le n}( f_k (\omega) - g_k)
    \right)^2\,d\mu \le C \sum_{m<k \le n} h_k
  \end{equation}
  for arbitrary integers $m <n$. Then for any $\eps>0$
  \[
  \sum_{1\le k \le n} f_k (\omega) =\sum_{1\le k\le n} g_k   +
  O (\theta^{1/2} (n) \log^{3/2+\eps} \theta (n)
  )
  \]
  for $\mu$ a.e.\ $\omega \in \Omega$, where $\theta (n)=\sum_{1\le k
    \le n} h_k$.
\end{proposition}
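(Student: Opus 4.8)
The plan is to deduce this Gal--Koksma type law from a Rademacher--Menshov type maximal inequality together with a Borel--Cantelli argument along a geometric subsequence. Write $D(n) = D(n,\omega) = \sum_{k \le n}\bigl(f_k(\omega) - g_k\bigr)$, so that hypothesis $(*)$ says exactly that $\| D(n) - D(m) \|_{L^2(\mu)}^2 \le C\,\bigl(\theta(n) - \theta(m)\bigr)$ for all $m < n$. If $\sum_k h_k < \infty$ there is nothing to prove: $(*)$ with $m = 0$ bounds $\|D(n)\|_{L^2(\mu)}$ uniformly, and since $f_k \ge 0$ the partial sums $\sum_{k \le n} f_k$ are non-decreasing, so $D(n)$ converges $\mu$-a.e.\ to a finite limit and the left-hand side is $\mu$-a.e.\ bounded. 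So from now on assume $\theta(n) \to \infty$.

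The first reduction removes very small weights. Since $0 \le h_k \le 1$ and (by $(*)$ applied to a single index) $h_k = 0$ forces $f_k = g_k = 0$ $\mu$-a.e., we may discard all indices with $h_k = 0$ and then group the rest into consecutive blocks $\Gamma_1, \Gamma_2, \dots$ with $H_i := \sum_{k \in \Gamma_i} h_k \in [1,2)$; because $\sum_k h_k = \infty$ this construction does not terminate. Put $F_i = \sum_{k \in \Gamma_i} f_k$ and $G_i = \sum_{k \in \Gamma_i} g_k \le H_i$. Summing $(*)$ over the indices in a run of blocks shows that $(F_i, G_i, H_i)$ again satisfies a bound $\| \sum_{I < i \le J}(F_i - G_i) \|_{L^2(\mu)}^2 \le C'(J - I)$, and $\Theta(J) := \sum_{i \le J} H_i$ satisfies $J \le \Theta(J) < 2J$. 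The error inside a block is handled by non-negativity of $f_k$: if $a_i = \min \Gamma_i$ then $-G_i \le D(n) - D(a_i - 1) \le F_i$ for every $n \in \Gamma_i$, so it suffices to know $F_i(\omega) \le i^{1/2}(\log i)^{3/2}$ eventually, and since $\| F_i \|_{L^2(\mu)} \le G_i + \sqrt{C'} = O(1)$, Chebyshev gives $\mu\bigl(F_i > i^{1/2}(\log i)^{3/2}\bigr) = O\bigl(i^{-1}(\log i)^{-3}\bigr)$, which is summable; Borel--Cantelli bounds the intra-block error by $O\bigl(i^{1/2}(\log i)^{3/2}\bigr)$ $\mu$-a.e.

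It remains to control the block-level sums $D^*(J) := \sum_{i \le J}(F_i - G_i) = D(a_{J+1} - 1)$. Along the geometric subsequence $J = 2^t$, the bound $\|D^*(2^t)\|_{L^2(\mu)}^2 \le C' 2^t$ and Chebyshev give $\mu\bigl(|D^*(2^t)| > 2^{t/2} t^{1/2 + \epsilon}\bigr) = O(t^{-1-2\epsilon})$, which is summable, so $\mu$-a.e.\ $|D^*(2^t)| \ll 2^{t/2} t^{1/2+\epsilon}$ for all large $t$. To fill the gap $2^{t-1} < J \le 2^t$ I invoke the Rademacher--Menshov maximal inequality in its quasi-orthogonal form, which from the increment bound $\|D^*(J) - D^*(I)\|_{L^2(\mu)}^2 \le C'(J - I)$ yields $\int \max_{2^{t-1} < J \le 2^t}\bigl(D^*(J) - D^*(2^{t-1})\bigr)^2 d\mu = O(t^2 2^t)$; Chebyshev at level $2^{t/2} t^{3/2 + \epsilon}$ then gives the summable probability $O(t^{-1-2\epsilon})$, and Borel--Cantelli gives $\mu$-a.e.\ $\max_{2^{t-1} < J \le 2^t}|D^*(J) - D^*(2^{t-1})| \ll 2^{t/2} t^{3/2+\epsilon}$. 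Combining these yields $D^*(J) = O\bigl(J^{1/2}(\log J)^{3/2+\epsilon}\bigr)$ $\mu$-a.e.; then, for $n \in \Gamma_i$, writing $D(n) = D^*(i-1) + \bigl(D(n) - D(a_i - 1)\bigr)$ and using the intra-block bound together with $\theta(n) \asymp i$ and $\Theta(i) \asymp i$, we recover the asserted $\mu$-a.e.\ estimate $\sum_{k \le n} f_k - \sum_{k \le n} g_k = O\bigl(\theta(n)^{1/2}(\log \theta(n))^{3/2+\epsilon}\bigr)$.

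The main obstacle is the maximal inequality: $(*)$ controls only the $L^2$ norm of an \emph{individual} increment $D(n) - D(m)$, whereas filling the dyadic gaps requires control of the supremum over a whole range of $n$. The Rademacher--Menshov dyadic-decomposition argument supplies this at the price of a $(\log)^2$ factor, and it goes through using only $(*)$ because, summed over the disjoint dyadic subintervals at a fixed scale, the quantities $\sum_{k \in I} h_k$ are dominated by $\sum_k h_k$ over the whole interval; one logarithm comes from a Cauchy--Schwarz estimate on the $O(\log)$ dyadic pieces of a single partial sum, the other from summing over the $O(\log)$ scales. This $(\log)^2$, combined with the $\tfrac12$-power produced by Chebyshev on $\|D\|^2 \asymp \theta$, is exactly what forces the exponent $\tfrac32 + \epsilon$; the remaining points (measurability of the a.e.\ statements, the discarding of null indices, and $\theta(n) \asymp i$) are routine bookkeeping.
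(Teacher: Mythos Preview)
The paper does not actually prove Proposition~\ref{prop:sprindzuk}; it is quoted as a known result of Schmidt (as formulated by Sprind\v{z}uk) and then \emph{applied} to prove Proposition~\ref{prop:SBC_T2}. So there is no in-paper proof to compare against.

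That said, your argument is essentially the standard proof and is correct. The outline---reduce to $\theta(n)\to\infty$, block the indices so that each block carries weight $H_i\in[1,2)$, control the within-block error using non-negativity of $f_k$ and the single-block $L^2$ bound, then handle the block sums $D^*(J)$ along the dyadic sequence $J=2^t$ via Chebyshev and fill the gaps with a Rademacher--Menshov maximal inequality---is exactly how Schmidt and Sprind\v{z}uk do it. Two small points worth tightening:
\begin{itemize}
\item In the bounded case $\sum_k h_k<\infty$, your sentence ``$D(n)$ converges $\mu$-a.e.\ to a finite limit'' needs one more line: from the uniform bound $\|D(n)\|_{L^2}^2\le C\sum_k h_k$ and monotone convergence applied to the non-decreasing sequence $\sum_{k\le n}f_k$, one gets $\mathbb E\bigl[(\sum_k f_k)^2\bigr]<\infty$, hence $\sum_k f_k<\infty$ a.e.; the convergence of $D(n)$ then follows since $\sum_k g_k$ also converges.
\item Your use of Rademacher--Menshov in the ``quasi-orthogonal'' form is the key step, and it does go through precisely because for any partition of $\{2^{t-1}+1,\dots,2^t\}$ into disjoint intervals one has $\sum_{\text{intervals }I}\bigl\|\sum_{i\in I}(F_i-G_i)\bigr\|_{L^2}^2\le C'\sum_I|I|=C'2^{t-1}$ directly from $(*)$. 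It would be worth stating this explicitly, since without it the reader might worry that genuine orthogonality is needed.
\end{itemize}
With those clarifications the proof is complete and matches the classical argument.
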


\begin{proof}[Proof of Proposition~\ref{prop:SBC_T2}]
In Proposition~\ref{prop:sprindzuk} take  $f_k=\phi_k \circ T^k$, $g_k=h_k=\mu(\phi_k)$ and, using part (b) of Proposition \ref{prop:decay_gibbs}, calculate 
\begin{align*}
\Bigg|\sum_{i=m}^{n} \sum_{j=i+1}^n  \int \phi_j\circ  T^j \phi_i \circ & T^i d \mu -\mu(\phi_j) \mu (\phi_i)\Bigg| \\ 
&=\left|\sum_{i=m}^{n} \sum_{j=i+1}^n \int \phi_j\circ T^{j-i} \phi_i  -\mu(\phi_j) \mu (\phi_i)\right|\\
&\le \sum_{i=m}^n  \sum_{j=i+1}^n C_1\theta^{j-i} \| \phi_j\|_{\alpha} \|\phi_i\|_{L^1_{\mu}}\\
&\le C_2 \sum_{i=m}^n \|\phi_i\|_{L^1_{\mu}}.
\end{align*}
The result follows immediately from Proposition~\ref{prop:sprindzuk}.
\end{proof}

\begin{remark} \label{rmk:balls} For any measurable set $A \subset Y$, we have $\| \mathbbm{1}_A \|_\alpha \le m(A) + \sup_{0 < \eps \le \eps_0} \frac{m(B_\eps (\partial A))}{ \eps^\alpha}$. Hence, any sequence of sets $(A_n)$ such that for some $0 < \alpha \le 1$, $$\sup_n \sup_{0 < \eps \le \eps_0} \frac{m(B_\eps (\partial A_n))}{ \eps^\alpha} < \infty$$ and $\sum_n \mu(A_n) = \infty$ will satisfy the strong Borel-Cantelli property. In particular, the sequence does not need to be decreasing.

\end{remark}

\begin{remark} As a direct consequence, we get for the Morita-Pollicott renormalization map $\T_2 : B \to B$ the strong Borel-Cantelli for any sequence of positive functions $(f_n)$ on $B$ bounded in the space $V_\alpha(B)$ for some $0 < \alpha \le 1$, with $\sum_n \int f_n \, d \mu_2 = \infty$. Indeed, by Proposition \ref{prop:pollicott}, this map is Gibbs-Markov with respect to the partition $\mathcal{Q} = \{B_i\}_{i \in \mathcal{I}}$.
This applies in particular to any sequences of balls $(B_{r_n}(p_n))$ with $\sum_n \mu_2 (B_{r_n}(p_n)) = \infty$, since such sequences satisfy the condition of Remark \ref{rmk:balls} for $\alpha = 1$.
\end{remark}

\subsection{Borel-Cantelli lemmas for a class of non-uniformly expanding maps} We now turn to investigate Borel-Cantelli lemmas for the Rauzy-Veech-Zorich renormalization map $\T_1$. 

\begin{remark} 
Note that by Haydn et al~\cite[Theorem 6.1]{HNPV} or by Galatolo \cite[Lemma 6, Lemma 7]{Galatolo} if $\{U_n\}$ is a sequence of balls in $\Delta_{\pi,
\eps}$, $\eps\in \{0,1\}$, satisfying $\mu_1 (U_n) \ge \frac{C}{n}$ then 
$\T_1^{2n} (p) \in U_n$ i.o. for $\mu_1$ a.e.\ $p\in \Delta_{\pi,
\eps}$ since $(\T_1^2,\rauz\times\Delta, \mu_1)$ has exponential decay of correlations for
Lipschitz functions~\cite{Avila_Bufetov}.  We are interested in obtaining quantitative rates for this almost sure result.
\end{remark}

We first proceed to identify a class of maps containing $\T_1$ for which such results hold. 

Let $(X,d)$ be a bounded, locally compact and separable metric space, with a Borel finite positive measure $m$. Let $T : X \to X$ be a non-singular transformation for which $m$ is ergodic. 

Suppose there exists a compact subset $Y \subset X$ with $m(Y)>0$ (without loss of generality, we can assume $m(Y) = 1$) and a countable measurable partition $\mathcal{Q} = \{Y_i\}_{i \in \mathcal{I}}$ of $Y$ such that the first return time $$r(y) = \inf \{n \ge 1 \, : \, T^n y \in Y\}$$ of $T$ to $Y$ is constant on each $Y_i$, and the first return map $\widehat{T} = T^r : Y \to Y$ is Gibbs-Markov with respect to the partition $\mathcal{Q}$. We also assume the first return time is integrable with respect to $m$: $\int_Y r \, dm < \infty$.

We will refer to such systems as non-uniformly expanding maps, even though more general definitions exist in the literature.

Under these assumptions, there exists an unique absolutely continuous with respect to $m$ probability measure $\mu$ which is $T$-invariant, and the system $(X,T, \mu)$ is ergodic. The existence follows directly from the existence of such a measure for the first return map $\widehat{T}$ and the integrability of $r$, while the uniqueness is ensured by \cite[Theorem 1.5.6]{Aaronson}.

We will deduce a strong Borel-Cantelli property for decreasing sequences of functions supported in $Y$ from our result for Gibbs-Markov maps and the following result of Kim \cite[Theorem 3.1]{Kim}:

\begin{theorem}\label{BC_Kim}

Let $(X,T, \mu)$ be an ergodic measure-preserving transformation, and let $T_E : E \to E$ be the first return map to a set $E$ of positive $\mu$-measure. Let $(f_n)$ be a decreasing sequence of nonnegative functions supported in $E$ such that $\sum_n \int f_n d\mu = \infty$. If every subsequence $(f_{n_k})$ with $\sum_k \int f_{n_k} d \mu = \infty$ is strong Borel-Cantelli with respect to $T_E$, then $(f_n)$ is strong Borel-Cantelli with respect to $T$.
 \end{theorem}

As an immediate corollary of Proposition \ref{prop:SBC_T2} and Theorem \ref{BC_Kim}, we get:

\begin{theorem} \label{thm:sbc_nue}

Let $(X,T, \mu)$ be a non-uniformly expanding system as described above, with induced set $Y$. Then any sequence $(f_n)$ of positive functions, supported in $Y$, bounded in $V_\alpha(Y)$ for some $0 < \alpha \le 1$, with $\sum_n \int_Y f_n \, d \mu = \infty$, satisfies the strong Borel-Cantelli property.
\end{theorem}

As seen in subsection \ref{ssec:MorPol}, the Rauzy-Veech-Zorich renormalization map is a non-uniformly expanding map, with induced set $B$. Since by Remark \ref{rmk:B placement}, for any $p^* = (\pi, \lambda)$ satisfying the Keane condition, we can find a good induced set $B$ that contains $p$, we obtain: 

\begin{theorem}\label{SBC_T1}
Let  $U_n\subset \rauz\times\Delta$ be a decreasing sequence of balls, shrinking to a point $p^*$ which satisfies the Keane condition, such that $E_n:=\sum_{j=1}^n \mu_1 (U_j)$ diverges. Then, for $\mu_1$ almost every $p \in \rauz \times \Delta$ 
\[
\frac{1}{E_n} \sum_{j=1}^n \mathbbm{1}_{U_j} \circ \T_1^j (p) \to 1.
\]
\end{theorem}

\begin{proof} Set $f_n = \mathbbm{1}_{U_n}$. By the discussion above, for $n$ large enough, $f_n$ will be supported in some fixed good induced set $B$. Since, as in Remark~\ref{rmk:balls}, $(f_n)$ is bounded in $V_{\alpha}(B)$, it follows from Theorem \ref{thm:sbc_nue} that $(f_n)$ is strong Borel-Cantelli with respect to $\T_1$. \end{proof}

\begin{remark}
This result remains true for any decreasing sequence of sets $U_n$ shrinking to a point $p^*$ as soon as the boundaries of these sets are sufficiently regular to ensure the condition of Remark \ref{rmk:balls} is satisfied.
\end{remark}

We now consider more general, non necessarily decreasing, sequences of functions supported in the induced set $Y$. We will require additional properties for the non-uniformly expanding system, and we will see later they are satisfied by the Rauzy-Veech-Zorich map.

We set $C_n =  \{r=n\} \subset Y$. This set is a disjoint union of elements of $\mathcal{Q}$: we have $C_n = \cup_{i \in \mathcal{I}_n} Y_i$, where $\mathcal{I}_n = \{i \in \mathcal{I} \, : \, \restr{r}{Y_i} \equiv n\}$. 

\begin{definition} \label{def:good_nue} Let $T$ be a non-uniformly expanding map. We say $T$ is \emph{good} if 
\begin{enumerate}
\item $(X,T, \mu)$ is mixing;
\item $m(r > n) \le C \gamma^n$;
\item $c_i \le C \gamma^n$, for all $n \ge 1$ and $i \in \mathcal{I}_n$;
\end{enumerate}
for some $C >0$ and $\gamma < 1$, where  $c_i = c_{1, Y_i}$ is the Lipschitz constant of $I_i = I_{1, Y_i} : Y \to Y_i$, the inverse branch of $\widehat{T}$ restricted to $Y_i$.
\end{definition}

Note that  $(X,T, \mu)$ is mixing if and only if $ \mbox{gcd}\{\restr{r}{Y_i} \, : \, i \in \mathcal{I} \} = 1$, see e.g. \cite{Young99}.

Under these assumptions, we have the following result for the decay of correlations of $(X,T,\mu)$ for observables supported in $Y$:

\begin{theorem} \label{thm:decay_nue}

If $T$ is a good non-uniformly expanding map, there exist $0 < \kappa < 1$ and $C>0$ such that for all $\phi \in V_{\alpha}(Y)$ and all $\psi \in L^1(\mu)$ supported in $Y$,

\[
\left| \int_X \phi \, \psi \circ T^n \, d\mu - \int_X \phi \, d \mu \int_X \psi \, d \mu \right| \le C \kappa^n \| \phi\|_{\alpha} \| \psi \|_{L^1_\mu}.
\]

\end{theorem}

This theorem has the following corollary:

\begin{corollary} \label{cor:bc_nue} Let $T$ be a good non-uniformly expanding map. Suppose $\{\phi_n\}$ is  a sequence of positive functions with support  in $Y$ bounded in $V_\alpha(Y)$
with $E_n:=\sum_{j=1}^{n}  \mu (\phi_j)$  divergent. Then
\[
\frac{1}{E_n} \sum_{j=1}^n \phi_j \circ T^j (x) \to 1
\]
for $\mu$ a.e.\ $x\in X$.

\end{corollary}

\begin{proof}
We will use  Proposition~\ref{prop:sprindzuk}. Take $f_k=\phi_k\circ T^k$ and $h_k=g_k= \mu (\phi_k)$. A rearrangement of terms shows that it 
suffices to show 
\[
\sum_{i=m}^n \sum_{j=i+1}^{n} \mu (\phi_j \circ T^{j-i} \phi_i)  - \mu_1 (\phi_j) \mu (\phi_i) \le C \sum_{i=m}^n \mu (\phi_i).
\]
But $ | \mu (\phi_j \circ T^{j-i}\phi_i ) -\mu (\phi_j)\mu (\phi_i)| \le C \kappa^{j-i}  \|\phi_i  \|_{L^1_\mu}$ which yields
the result as $\sum_{j>i} \kappa^{j-i } $ is summable.
\end{proof}

To prove Theorem \ref{thm:decay_nue}, we will use operator renewal theory, in the spirit of Sarig \cite{Sarig} and Gou\"ezel \cite{Gouezel3}, even though in our situation of exponential tails for the return time, the situation is easier. We will make use of the following Proposition:

\begin{proposition} \cite[Proposition 3.4]{Gouezel2}\label{exp_renewal}

Let $Q$ be a Banach space and suppose $(R_n)_{n \ge 1}$ is a sequence of bounded operators on $Q$. Assume that $\|R_n\| = O(\theta^n)$ for some $0 < \theta < 1$. Hence $R(z)=\sum R_n z^n$ and $R'(z) =\sum n R_n z^{n-1}$ are well-defined operators on $Q$ for $z$ in the unit complex disc $\bar{\D}$.  Assume $1$ is a simple isolated eigenvalue of $R(1)$ and the  eigenprojector $\Pi$ satisfies $\Pi R'(1) \Pi =\gamma \Pi$ for some  $\gamma \not =1$ and that $I-R(z)$ is invertible for all $z\in \bar{\D} \setminus \{1\}$.
Let $V_n=\sum_{l=1}^{\infty} \sum_{k_1 +\ldots + k_l = n} R_{k_l}\circ \ldots \circ R_{k_1}$. Then $V_n$ is a bounded linear operator on $Q$
and $\|V_n -  \frac{1}{\gamma} \Pi \| = O(\kappa^n)$ for some $0 < \kappa < 1$.

\end{proposition}

Let $L$ be the transfer operator associated to the non-uniformly expanding map $T : X \to X$, defined for $\phi \in L^1(m)$ by $$L\phi(x) = \sum_{Ty = x} \frac{\phi(y)}{Jac(T)(y)}.$$

Let $P$ be the transfer operator associated to the first return map $\widehat{T} : Y \to Y$. By the results of subsection \ref{subsection:gibbs}, this operator admits a spectral gap on the space $V_\alpha(Y)$.

Let $R_n \phi : =\mathbbm{1}_{Y} L^n (\mathbbm{1}_{C_n} \phi)$ and $V_n \phi :=\mathbbm{1}_{Y} P^n (\mathbbm{1}_{Y} \phi)$. The linear operator $R_n$ corresponds to first returns to $Y$ at 
time $n$ while  $V_n$ considers all  points starting in $Y$ which have returned to $Y$ at time $n$, whether first return or not. The following renewal
equation holds:
\[
V_n=\sum_{l=1}^{\infty} \sum_{k_1 +\ldots + k_l = n} R_{k_l}\circ \ldots \circ R_{k_1}.
\]

We will show these operators satisfy the three required conditions to apply Proposition \ref{exp_renewal}. Recall the definition of a good non-uniformly expanding map from Definition \ref{def:good_nue}.

\begin{lemma} \label{lemma:ren1}

There exists $0<\theta < 1$ and $C >0$ such that $\|R_n \| \le C \theta^n$.

\end{lemma}

\begin{proof}

We have $R_n \phi = \sum_{i \in \mathcal{I}_n} \frac{\phi(I_i x)}{{\rm Jac}(\widehat{T})(I_i x)}$, whence $R_n = \sum_{i \in \mathcal{I}_n} M_{1, Y_i}$. Thus, by lemma \ref{lem:mnq}, we have $$\begin{aligned} \|R_n \phi \|_{L^1_m} \le \sum_{i \in \mathcal{I}_n} \|M_{1, Y_i} \phi \|_{L^1_m} = \sum_{i \in \mathcal{I}_n} \int_{Y_i} | \phi | \, dm& = \int_{C_n} | \phi | \, dm  \\ & \le m(C_n) \| \phi \|_{L^{\infty}_m} \\ & \le C m(C_n) \| \phi \|_{\alpha}, \end{aligned}$$ and $$\begin{aligned} \int \mbox{osc} (R_n \phi, B_{\eps} (x)) dm(x)  & \le \sum_{i \in \mathcal{I}_n} \int \mbox{osc}(M_{1, Y_i} \phi, B_{\eps} (x)) dm(x)  \\ & \le C \left( \sum_{i \in \mathcal{I}_n} \int_{Y_i}  \mbox{osc}( \phi, B_{c_i \eps} (x)) dm(x) + \eps \sum_{i \in \mathcal{I}_n} \int_{Y_i} | \phi | \, dm \right) \\ & \le C \int_{C_n} \mbox{osc} (\phi, B_{c^{(n)} \eps}(x)) dm(x) + C \eps \int_{C_n} | \phi | \, dm, \end{aligned} $$ where $c^{(n)} = \sup_{i \in \mathcal{I}_n} c_i$.

We have $$\begin{aligned} \int_{C_n} \mbox{osc}(\phi, B_{c^{(n)} \eps} (x))dm(x) \le \int_B \mbox{osc}(\phi, B_{c^{(n)} \eps} (x))dm(x) & \le (c^{(n)})^{\alpha} \eps^{\alpha} | \phi |_{\alpha} \\ & \le (c^{(n)})^{\alpha} \eps^{\alpha} \| \phi \|_{\alpha} \end{aligned}$$ and $\int_{C_n} | \phi | \, dm \le m(C_n) \| \phi \|_{L^{\infty}_m} \le C m(C_n) \| \phi\|_{\alpha}$, whence $$|R_n \phi |_{\alpha} \le C ((c^{(n)})^{\alpha} + m(C_n)) \| \phi \|_{\alpha}$$  and similarly for $\|R_n \phi \|_{\alpha}$. Since $c^{(n)}$ et $m(C_n)$ decay exponentially fast by assumption, one obtains that $\|R_n\| = \mathcal{O} (\theta^n)$ for some $0 < \theta < 1$.
\end{proof}

\begin{lemma} \label{lemma:ren2}

$R(1)$ admits $1$ as a simple isolated eigenvalue, and the corresponding eigenprojector is given by $$\Pi \phi = \left( \int_Y \phi \, dm \right) \frac{h_Y}{\mu(Y)},$$ where $h_Y$ is the restriction to $Y$ of the density $h$ of the measure $\mu$ (and then $\frac{h_Y}{\mu(Y)}$ is the density of the absolutely continuous invariant probability for $\widehat{T}$).

Furthermore, we have $\Pi R'(1) \Pi = \frac{\Pi}{\mu(Y)}$, so that $\gamma$ in Proposition \ref{exp_renewal} is equal to $\frac{1}{\mu(Y)}$.

\end{lemma}

\begin{proof}

We note that $R(1) = P$ is the transfer operator of the Gibbs-Markov map $\widehat{T}$. Consequently, $1$ is a simple isolated eigenvalue, and the corresponding eigenprojector is given by the desired formula. 

We have $$\Pi R'(1) \Pi \phi = \left( \frac{\int_Y R'(1) h_Y \, dm}{ \mu(Y)} \right) \left( \frac{\int_Y \phi \, dm }{\mu(Y)} \right) h_Y,$$ whence $\gamma = \frac{\int_Y R'(1) h_Y \, dm }{\mu(Y)}$.

Since $R_n \phi = \mathbbm{1}_Y L^n(\mathbbm{1}_{C_n} \phi) = \mathbbm{1}_Y P(\mathbbm{1}_{C_n} \phi)$ for any function $\phi$, we have\begin{align*} 
\int_Y R'(1) h_Y \, dm & = \sum_n n \int_Y P(\mathbbm{1}_{C_n} h_Y) \, dm = \sum_n n \int_{C_n} h_Y \, dm\\
&= \sum_n n \mu(C_n) = \int_Y r \, d\mu = 1\end{align*}
by Kac's lemma, and we get $\gamma = \frac{1}{\mu(Y)}$.
\end{proof}

It remains to prove the aperiodicity condition: 

\begin{lemma} \label{lemma:ren3}

For all $z \in \bar{\D} \setminus \{1\}$, $I-R(z)$ is invertible on $V_\alpha(Y)$.

\end{lemma}

\begin{proof}

We first establish a Lasota-Yorke inequality for the operator $R(z)$. Remark that $$R(z)^k =\sum_{n_1, \ldots, n_k \ge 1} z^{n_1 + \ldots + n_k} R_{n_k} \circ \ldots \circ R_{n_1},$$ and that $$R_{n_k} \circ \ldots \circ R_{n_1} = \sum_{i_1 \in \mathcal{I}_{n_1}, \ldots, i_k \in \mathcal{I}_{n_k}}  M_{k, Q_{I_1, \ldots, I_k}},$$ where $Q_{I_1, \ldots, I_k} \in \mathcal{Q}_k$ is defined by $Q_{I_1, \ldots, I_k} = Y_{i_1} \cap \widehat{T}^{-1} Y_{i_2} \cap \ldots \cap \widehat{T}^{-(k-1)} Y_{i_k}$. 
Then, summing all the relations from Lemma \ref{lem:mnq} and noticing that $|z| \le 1$ and $n_1 + \ldots + n_k \ge k$, we have $\|R(z)^k \phi\|_{L^1_m} \le C |z|^k \| \phi \|_{L^1_m}$ and $|R(z)^k \phi|_{\alpha} \le C|z|^k \left( \theta^{-\alpha k} |\phi|_{\alpha } + \| \phi\|_{L^1_m}\right)$, arguing as in the proof of Lemma \ref{lem:LY}.

This shows that the spectral radius of $R(z)$ is less than $|z|$, while the essential spectral radius of $R(z)$ is strictly less than $1$ if $|z| = 1$, by Hennion's theorem \cite{Hennion}. Thus, the problem reduces to prove that the relation $R(z) \phi = \phi$, with $|z| = 1$ and $\phi \in V_{\alpha}(Y)$ implies that $z = 1$ or $\phi = 0$. 

Let $|z| = 1$ and $\phi \in V_{\alpha}(Y)$ non-zero satisfying $R(z) \phi = \phi$, that is $P(z^r \phi) = \phi$. By \cite[Proposition 1.1]{Mor}, we deduce that $\left( \frac{\phi}{h_Y} \right) \circ \widehat{T} = z^r  \frac{\phi}{h_Y}$. Since $(X,T,\mu)$ is mixing, and hence weakly mixing, by Proposition \ref{prop:aperiodicity} (see Appendix), we get that $z= 1$, concluding the proof.
\end{proof}

\begin{proof}[Proof of Theorem \ref{thm:decay_nue}] By lemmas \ref{lemma:ren1}, \ref{lemma:ren2} and \ref{lemma:ren3}, we can apply Proposition \ref{exp_renewal} and get $\|V_n  - \mu(Y) \Pi\| \le C \kappa^n$, i.e. $$\left\| V_n \phi - \left( \int_Y \phi \, dm \right) h_Y \right\|_{\alpha} \le C \kappa^n \|\phi \|_{\alpha},$$ for all $\phi \in V_\alpha(Y)$.

Let $\phi \in V_\alpha(Y)$ and $\psi \in L^1(\mu)$ supported in $Y$. We have 

$$ \int_X \phi  \, \psi \circ T^n \, dm = \int_X \mathbbm{1}_Y L^n(\mathbbm{1}_Y \phi) \psi \, dm = \int_Y (V_n \phi)  \, \psi \, dm,$$

Since \begin{align*}
\left|\int_Y V_n (\phi) \, \psi dm  - \int_Y\phi~dm\int_Y \psi~d\mu \right| &=  \left|\int_Y \left[V_n \phi - \left(\int_Y\phi \,dm\right)h_Y\right]\psi~dm \right| \\
&\le \left\|V_n \phi- \left(\int_Y \phi \, dm\right)h_Y\right\|_{\alpha}  \int_Y |\psi |~dm\\
&\le  C \kappa^n \|\phi\|_{\alpha} \|\psi\|_{L^1_m},
\end{align*}

we get \begin{align*}
\left|\int_X\phi \, \psi\circ T^n \,  dm -\int_Y\phi~dm\int_Y \psi~d\mu \right| & \le C\kappa^n \| \phi\|_{\alpha} \|\psi\|_{L^1_m} \\ 
&\le C \kappa^n \|\phi\|_{\alpha} \|\psi\|_{L^1_{\mu}},
\end{align*}

as $\|\psi\|_{L^1_m} \le \|h_Y^{-1}\|_{L^{\infty}_m} | \psi \|_{L^1_\mu} \le C \| \psi \|_{L^1_\mu}$, the density of $\mu$  being bounded from below on $Y$.

The theorem follows by taking $\phi h_Y$ for $\phi$, using the fact that $\| \phi h_Y \|_\alpha \le \|h_Y\|_\alpha \| \phi \|_\alpha \le C \| \phi \|_\alpha$.
\end{proof}

In order to apply Corollary \ref{cor:bc_nue} to the Rauzy-Veech-Zorich map, we need mixing, so we will rather consider the map $G = \T_1^2$ restricted to $\Delta_{\eps}$, $\eps = 0,1$, which admits $\tilde{\mu}_1 = 2 \mu_1 ( . \cap \Delta_\eps)$ as an invariant measure. If the good induced set $B$ is included in $\Delta_\eps$, then $\T_2 : B \to B$ is the first return map of $G$ to $B$, with associated return time $\tilde{n}_2 = \frac{n_2}{2}$.
It has been shown by Avila and Bufetov \cite{Avila_Bufetov} that the measure of the set $\{n_1 = n\}$ decays exponentially fast with $n$. To apply Corollary \ref{cor:bc_nue}, it remains to prove the condition on the Lipschitz constants:

\begin{lemma} The Lipschitz constant $c_i = c_{i, B_i}$ of $I_i : B \to B_i$ decays exponentially fast with $n$: there exist $0 < \gamma < 1$ and $C > 0$ such that $c_i \le C \gamma^n$ for all $n \ge 1$ and all $i \in \mathcal{I}_n$.

\end{lemma}

\begin{proof}
By Avila-Bufetov \cite{Avila_Bufetov}, $m(C_n)$ decays exponentially fast. The map $I_n: B \to Y_i$ is a composition of a linear
map $\lambda\to A\lambda$ followed by $A \lambda \to \frac{A\lambda}{|A\lambda |_1}$.  $A$ is a non-negative matrix
and  $\frac{\lambda_i}{\lambda_j}$ is bounded for all $\lambda=(\lambda_1,\ldots,\lambda_d)$ in $B$. Hence $1\ge \frac{|A\lambda |}{\|A\|}>C>0$ for
all $\lambda \in B$ (this is an observation of Avila and Bufetov~\cite[Page 9]{Avila_Bufetov}). Furthermore $\frac{|A\lambda^{'}|_1}{|A\lambda|_1}<C$
for all $\lambda,\lambda^{'}$ in $B$ by Proposition 1.3. Thus the exponential decay of volume implies that at least one direction contracts 
exponentially under $I_n$ by a factor $\gamma^{1/d}$ and hence all directions do, this implies $L_n \le C (\gamma^{\frac{1}{d}})^n$.
\end{proof}

We can then conclude:
\begin{theorem} 

Suppose $\{\phi_n\}$ is  a sequence of positive functions with support  in $B$, bounded in $V_\alpha(B)$
with $E_n:=\sum_{j=1}^{n}  \tilde{\mu}_1 (\phi_j)$  divergent. Then
\[
\frac{1}{E_n} \sum_{j=1}^n \phi_j \circ G^j (x) \to 1
\]
for $\mu_1$ a.e.\ $x\in \Delta_\eps$.

\end{theorem}

This theorem applies in particular to sequences of characteristic functions of balls included in $B$.

\section{Extreme Value Laws for $\T_1$ and $\T_2$.}\label{extreme}


By expressing $\T_2$ as a multidimensional piecewise expanding map with exponential decay of correlations with respect to a quasi-H\"older norm 
versus $L^1$ we are able to apply results on Extreme Value statistics for such systems. Let $\phi: B \to \R \cup \{+\infty\}$ be a function, 
strictly maximized at a point $p_0\in B$, 
which is sufficiently regular that for large $u$ the set $\{ x \in B: \phi (x) >u\}$ corresponds to a topological ball centered at $p_0$. 
Let $$M_n (x):= \max \{ \phi (x), \phi \circ \T_2 (x), \ldots, \phi \circ \T_2^n (x)\}.$$
The aim is to show that we have a non-degenerate limit law for $M_n$, which we think of as a random variable.  Since almost surely $M_n$ converges to  $\phi(p_0)$, since $\mu_2$ is ergodic, for such a law, we need to rescale our variable.  To this end, for each $t$ we define scaling constants $u_n(t)$
by $n\mu_2 (\phi > u_n(t) )\to t$.  For example, if $\phi(x)=-\log d(x,p_0)$ then $u_n(t)= d^{-1}[\log C(d)+ \log n -\log t]$ where $C(d)$ is the constant giving the volume of the  unit ball in $d$ dimensional Euclidean space (if $d$ is the dimension of $B$). In fact we may always write $u_n(t)$ in the form $$u_n (t)=u_n^{\T_2} (t)=\frac{g(t)}{a_n}+b_n$$ for some function $g(t)$ and sequence of constants $a_n$, $b_n$.  In our example $a_n=d$, $g(t)=\log C(d) -\log t$ and $b_n=\frac{1}{d} \log n$.
where $d$ is the dimension of $B$.  We say that we have an \emph{Extreme Value Law} if the variable
$M_n$ under scaling by $u_n$ converges to some non-degenerate distribution.  For the classical application of these ideas to i.i.d. processes, see \cite{LLR83}.  For more recent applications to dynamical systems, as we have here, see for example \cite{Collet, FreFreTod10, HNT}.

There is a close connection between rare events point processes (REPP), extreme value laws and hitting times.  First we describe what we mean by a compound Poisson process. Let $\mathcal{R}$ be the ring of subsets of $\R^+$ generated by the semi-ring of subsets of form $[a,b)$ so that an 
element of $J\in \mathcal{R}$ has the form $J=\cup_{i=1}^n [a_i,b_i)$.

\begin{definition} 
Let $X_1$, $X_2$, $\ldots$, be an iid sequence of random variables with common exponential distribution of mean $\frac{1}{\theta}$. Let $D_1$, $D_2$, $\ldots$
be another iid sequence of random variables, independent of $X_i$ and with distribution function $\eta$. We say that $N$ is a \emph{compound Poisson process of intensity $\theta$ and multiplicity distribution function $\eta$} if for every $J\in \mathcal{R}$
\[
N(J)=\int1_{J} d\left(\sum_{i=1}^{\infty} D_i \delta_{X_1+\ldots +X_i}\right),
\]
where $\delta_t$ is  the Dirac measure at $t$.  If $P(D_1=1)=1$ then $N$ is  the standard Poisson distribution and for every $t>0$ the random variable
$N([0,t))$ has a Poisson distribution of mean $\theta t$.

\end{definition}

\begin{remark} 
In our applications $\eta$ will follow a geometric distribution of parameter $\theta \in (0,1]$ and $\pi (k):=P(D_1=k) =\theta (1-\theta)^k$ for every 
integer $k\ge 0$. In this case the random variable follows a P\'olya-Aeppli distribution, 
\[
P(N([0,t))=k)=e^{-\theta t} \sum_{i=1}^k \theta^i (1-\theta)^{k-i} \frac{(\theta t)^i}{i!}  \left( \begin{array}{c}
k-1 \\
i-1 \end{array} \right).
		\] 
		\end{remark}

		Define $v_n^{\T_2} (t) :=\mu_2 (\phi > u_n^{\T_2})^{-1}$ so that $v_n^{\T_2} (t)\sim \frac{n}{t}$. If $J=\cup_{i=1}^n [a_i,b_i) \in \mathcal{R}$ and $\gamma>0$, define
		$\gamma J=\cup_{i=1}^n [\gamma a_i,\gamma b_i) \in \mathcal{R}$.
		
		We define the rescaled REPP $N_n^{\T_2}$ as
		\begin{equation}
		N_n^{\T_2} (J):= \sum_{j\in v_n^{\T_2} J \cap \N_0} 1_{(\phi\circ\T_2^j > u_n^{\T_2})}.
		\label{eq:N_n T_2}
		\end{equation}
		EVLs and limit laws  for $N_n^{\T_2}$ for $\T_2$  follow directly from~\cite[Proposition 3.3]{AytFreVai13}.  We state them here:
		
		\begin{proposition} Suppose that $p_0$ satisfies the Keane condition.
		(1) If $p_0$ is not a periodic point for $\T_2$ then $\mu_2 \{ M_n \le u_n (t) \}\to e^{-t}$ and the REPP $N_n^{\T_2}$ converges in distribution to a standard 
		Poisson process $N$ of intensity $1$.
		
		(2) If $p_0$ is a  repelling periodic point of prime period $k$ then $\mu_2 \{ M_n \le u_n (t) \}\to e^{-\theta t}$ where $\theta=1-|Jac(D\T_2^{-k}) (p_0)|$
		and the REPP $N_n^{\T_2}$ converges in distribution to  a compound Poisson process $N$ with intensity $\theta$ and multiplicity distribution function
		$\eta$ given by $\eta (j)=\theta(1-\theta)^j$ for all integers $j\ge 0$.
		\label{prop:BC for T2}
		\end{proposition}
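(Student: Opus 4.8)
The plan is to deduce the statement directly from the general Extreme Value / REPP machinery for multidimensional piecewise expanding maps with exponential decay of correlations in the quasi-Hölder norm, exactly as in \cite[Proposition 3.3]{AytFreVai13}. The point is that $(\T_2, B, \mu_2)$ satisfies all the hypotheses of that framework: $\T_2$ is a piecewise $C^{1+}$ expanding map with the uniform expansion and bounded distortion of Proposition~\ref{prop:pollicott}, the invariant density $h_B$ is bounded away from $0$ and $\infty$ (indeed analytic), and by part (a) of the transfer operator estimate (equivalently part (b)) we have exponential decay of correlations $|\int \phi\,\psi\circ\T_2^n\,d\mu_2 - \mu_2(\phi)\mu_2(\psi)| \le C\theta^n \|\phi\|_\alpha \|\psi\|_{L^1_{\mu_2}}$. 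Since $\phi$ is chosen so that the super-level sets $\{\phi > u\}$ are topological balls centered at $p_0$ for $u$ large, the characteristic functions $\mathbbm{1}_{\{\phi>u_n\}}$ have quasi-Hölder norm bounded by a constant times $\mu_2(\phi>u_n) + $ (boundary term), which is controlled uniformly; this is precisely the regularity needed to feed $\phi = \mathbbm{1}_{\{\phi>u_n\}}$ (and products over a finite orbit segment) into the decay-of-correlations estimate.

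The two cases are then handled by verifying the two standard conditions (in the terminology of \cite{AytFreVai13}, conditions $D_p$ and $D'_p$, or equivalently $\SP_q$-type conditions). First I would treat case (1): when $p_0$ is not periodic, one checks that the return sets $\{\phi > u_n\}$ do not cluster, i.e.\ the short-return contribution $\sum_{j=1}^{p} \mu_2(\{\phi>u_n\}\cap \T_2^{-j}\{\phi>u_n\})$ is negligible compared to $\mu_2(\phi>u_n)$; this uses that the preimages of a small ball around a non-periodic point are disjoint from the ball at all small scales, together with bounded distortion to control measures of intersections. Condition $D_p$ (a mixing-type decorrelation condition uniform in the level) follows from the exponential decay of correlations against the quasi-Hölder norm, splitting the orbit into a "past" block and a gap of length $\sim (\log v_n^{\T_2})^2$ so that the error $\theta^{\text{gap}}$ beats $1/v_n^{\T_2}$. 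With $D_p$ and $D'_p$ in hand, \cite[Proposition 3.3]{AytFreVai13} gives $\mu_2\{M_n \le u_n(t)\} \to e^{-t}$ and convergence of $N_n^{\T_2}$ to the standard Poisson process of intensity $1$.

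For case (2), with $p_0$ a repelling periodic point of prime period $k$, the super-level sets do cluster along the periodic orbit, and the correct extremal index is $\theta = 1 - |Jac(D\T_2^{-k})(p_0)|$; here the computation is that, for large $u$, the set $\{\phi>u\}\cap \T_2^{-k}\{\phi>u\}$ has $\mu_2$-measure asymptotic to $|Jac(D\T_2^{-k})(p_0)|\,\mu_2(\phi>u)$, by the local linearization of $\T_2^k$ near $p_0$ and continuity of the density $h_B$. This is exactly the periodic-point scenario of \cite{AytFreVai13} (and \cite{FreFreTod10}), which produces the compound Poisson (Pólya--Aeppli) limit with intensity $\theta$ and geometric multiplicity $\eta(j) = \theta(1-\theta)^j$; the decorrelation condition $D_p$ is verified as before from the quasi-Hölder decay of correlations, and the non-clustering condition is replaced by its periodic analogue, which holds automatically once the clustering is entirely captured by the single period-$k$ return. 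The main obstacle, and the only place where genuine work beyond citation is needed, is confirming that the quasi-Hölder norms of the indicator functions $\mathbbm{1}_{\{\phi>u_n\}}$ (and of the finite products appearing in $D'_p$) stay uniformly bounded as $u_n \to \infty$ — i.e.\ that the geometry of the balls $\{\phi>u_n\}\subset B$ near $p_0$ (including the case where $p_0 \in \partial B$ or near the discontinuity set of $\T_2$, which one avoids by Remark~\ref{rmk:B placement}) is regular enough that $|\mathbbm{1}_{\{\phi>u_n\}}|_\alpha \le C\,\mu_2(\phi>u_n)^{\,?}$ with an exponent compatible with the decay-of-correlations bound; granting this, everything else is a direct application of the cited propositions.
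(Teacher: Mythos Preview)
Your proposal is correct and follows essentially the same approach as the paper: the paper simply states that the result ``follows directly from~\cite[Proposition 3.3]{AytFreVai13}'' and gives no further argument. Your write-up is in fact more detailed than the paper's, since you spell out which hypotheses of that proposition need checking (quasi-H\"older decay of correlations, uniform expansion and bounded distortion from Proposition~\ref{prop:pollicott}, regularity of the density $h_B$, and control of the quasi-H\"older norm of the indicators $\mathbbm{1}_{\{\phi>u_n\}}$), whereas the paper treats the citation as a black box.
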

		
		Now define $u_n^{\T_1}(t)$ to be so that $n\mu_1 (\phi > u_n^{\T_1})\to t$ as $n\to \infty$.  Then setting  $v_n^{\T_1}(t) :=\mu_1 (\phi > u_n^{\T_1}(t))^{-1}$, we can define the REPP $N_n^{\T_1}$ by changing all the appearances of $\T_2$ in \eqref{eq:N_n T_2} to $\T_1$.  We then have the following corollary.
		
		\begin{corollary} Suppose that $p_0$ satisfies the Keane condition.
		(1) If $p_0$ is not a periodic point for $\T_1$ then $\mu_1\{ M_n \le u_n^{\T_1} (t) \}\to e^{-t}$ and the REPP $N_n^{\T_1}$ converges in distribution to a standard 
		Poisson process $N$ of intensity $1$.
		
		(2) If $p_0$ is a  repelling periodic point of prime period $k$ then $\mu_1 \{ M_n \le u_n (t) \}\to e^{-\theta t}$ where $\theta=1-|Jac(D\T_1^{-k}) (p_0)|$
		and the REPP $N_n^{\T_1}$ converges in distribution to  a compound Poisson process $N$ with intensity $\theta$ and multiplicity distribution function
		$\eta$ given by $\eta (j)=\theta(1-\theta)^j$ for all integers $j\ge 0$.
\end{corollary}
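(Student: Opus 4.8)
The plan is to deduce the statements for $\T_1$ from the corresponding statements for $\T_2$ (the Proposition just above) via the tower/suspension picture already set up in the paper, where $\T_1^2$ (on $\Delta_\eps$) is modelled by the first-return tower $F_2:\tilde B\to\tilde B$ over $(\T_2,B,\mu_2)$, with projection $\Pi(x,k)=\T_1^k(x)$ and $\Pi^*\tmu_2=\mu_1$ (up to the harmless factor $\frac12$ coming from the two cyclic classes, which I will absorb into the normalisation). The key observation is that the extreme-value / REPP asymptotics are governed only by the \emph{statistics of hitting a shrinking neighbourhood of $p_0$}, and a hit of the $\T_1$-orbit to $\{\phi>u_n\}$ corresponds, under the semiconjugacy, to a visit of the $F_2$-orbit to the set $\Pi^{-1}\{\phi>u_n\}$ inside the tower. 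Since $p_0\in B$, for $u_n$ large the set $U_n:=\{\phi>u_n\}$ is a topological ball contained in $B$, which sits at the base level $k=0$ of the tower; hence hits to $U_n$ along the $\T_1$-orbit are exactly hits to $U_n$ along the induced orbit of $\T_2$, reindexed by the return-time cocycle.

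First I would record the precise dictionary: writing $\tau$ for the first-return time of $G=\T_1^2$ to $B$ and $\tau_m(x)=\sum_{j=0}^{m-1}\tau(\T_2^j x)$ for the Birkhoff sums, a point $p$ satisfies $\T_1^\ell(p)\in U_n$ for some $\ell\le N$ iff the corresponding base point $x\in B$ satisfies $\T_2^m(x)\in U_n$ for some $m$ with $\tau_m(x)$ comparable to $N$. Because $\tau$ is $\mu_2$-integrable with exponential tails (stated in the paper, via Avila--Bufetov), the ergodic theorem gives $\tau_m(x)/m\to \mu_2(\tau)=:\bar\tau$ for $\mu_2$-a.e.\ $x$, and moreover uniformly enough (in the sense of \cite{HNPV}-type tower arguments) that the rescaling $v_n^{\T_1}(t)\sim n/t$ on the $\T_1$ side matches $v_n^{\T_2}(t)\sim n/t$ on the $\T_2$ side after dividing by $\bar\tau$; concretely $\mu_1(U_n)=\tmu_2(\Pi^{-1}U_n)=\mu_2(U_n)/\bar\tau$ since $U_n$ lies at level $0$. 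This is exactly the setting in which the transfer of EVLs and REPP convergence from a base map to its suspension is known to hold: one checks the conditions $\text{\textcopyright}$ type conditions ($D_p(u_n)$ / $D'_p(u_n)$ or $SP_p(u_n)$, in the notation of \cite{AytFreVai13}) for the tower $F_2$, which follow from those for $\T_2$ together with the exponential return-time tails controlling the time spent away from the base. I would cite \cite{HNPV} and \cite{AytFreVai13} for this lifting principle rather than re-derive it.

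The two cases then follow mechanically. In case (1), $p_0$ is not periodic for $\T_1$; then it is not periodic for $\T_2$ either (a $\T_2$-period would force a $\T_1$-period by the semiconjugacy), so Proposition applies to give a standard Poisson limit for $N_n^{\T_2}$, and the lifting principle gives the standard Poisson limit for $N_n^{\T_1}$, hence $\mu_1\{M_n\le u_n(t)\}\to e^{-t}$. In case (2), $p_0$ is a repelling periodic point of prime period $k$ for $\T_1$; I would relate its $\T_1$-dynamics to $\T_2$-dynamics (it is $\T_2$-periodic of some period $k'$, with $k=\tau_{k'}(p_0)$), obtain from Proposition the compound Poisson limit with parameter $\theta=1-|Jac(D\T_2^{-k'})(p_0)|=1-|Jac(D\T_1^{-k})(p_0)|$ (the Jacobians along the orbit multiply out to the same thing, since $\T_1^k=\T_2^{k'}$ near $p_0$), and the geometric multiplicity law $\eta(j)=\theta(1-\theta)^j$ transfers verbatim.

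\textbf{Main obstacle.} The delicate point is the lifting of the short-return / independence conditions from the base $(\T_2,B,\mu_2)$ to the suspension $(F_2,\tilde B,\tmu_2)$: one must check that mass does not ``leak'' into high levels of the tower in a way that destroys the Poissonian clumping picture, i.e.\ that the error terms in conditions $D'_p(u_n)$ remain negligible after the time-change $\tau_m$. This is where the exponential tail bound $\mu_2(\tau>k)\le C\rho^k$ is essential, and where I would lean most heavily on the already-established decay of correlations (Lemma~\ref{lem:LY}, Theorem~\ref{T1_decay}) and on the general tower machinery of \cite{HNPV} and \cite{AytFreVai13}; everything else is bookkeeping.
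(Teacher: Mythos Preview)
Your approach is essentially correct and matches the paper's in spirit: both deduce the $\T_1$ statements from the $\T_2$ statements via the first-return/tower structure, using that $\T_2$ is a first-return map of $\T_1$ to $B$ (with $B$ chosen to contain $p_0$, cf.\ Remark~\ref{rmk:B placement}), and that the periodic/non-periodic dichotomy and the extremal index $\theta$ transfer verbatim since $\T_1^k=\T_2^{k'}$ near $p_0$.

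The difference is one of packaging. The paper does not re-verify conditions of type $D_p(u_n)$, $D'_p(u_n)$ on the tower; instead it invokes a black-box transfer principle: the main result of \cite{HayWinZwe13} (and its generalisation in \cite{FreFreTod14}, with the periodic case already in \cite{FreFreTod13}) states directly that hitting/return time statistics and the associated EVL/REPP limits pass from a first-return map to the ambient system, with no further hypotheses beyond integrability of the return time. This bypasses your ``main obstacle'' entirely --- the exponential tail of $\tau$ is not even needed for the transfer, only finiteness of $\int\tau\,d\mu_2$. Your proposed route through \cite{AytFreVai13}-style conditions on the tower would work but is the harder way; and \cite{HNPV} concerns Borel--Cantelli lemmas rather than EVL/REPP, so it is not the right citation for the lifting step.
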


Observe that $p_0$ as above, that is the point where $\phi$ takes its maximum, we can choose our set $B$ to contain $p_0$, so that the result in Proposition~\ref{prop:BC for T2} applies to the corresponding first return map $\T_2$.
The proof that we can always pass from the result on the first return map (i.e., $\T_2$ here) to the original case (i.e., for $\T_1$), which is a simple generalisation of the main result in \cite{HayWinZwe13}, appears in \cite{FreFreTodVai15}.  Note that the second part was already proved in \cite{FreFreTod13}.

\section{Return and hitting time statistics.}

In this section we consider a natural notion of recurrence which, as in \cite{FreFreTod10}, is analogous to the EVL perspective in the previous section.
Suppose $p_0\in B$ and $U_n$ is a sequence of balls nested at $p_0$. Let $\tau_{2, U} (x):= \min \{n\ge 1: \T_2^n (x)\in U\}$.
 We say that $\T_2$ has \emph{hitting time statistics to $\{U_n\}$ with distribution $H(t)$}
if
\[
\lim_{n\to \infty} \mu_2 \left( x\in B: \tau_{2, U_n} (x) \le \frac{t}{\mu_2 (U_n))} \right) = H(t).
\]
We say that $\T_2$ has \emph{return  time statistics  to $\{U_n\}$ with distribution $\tilde{H}(t)$}
if
\[
\lim_{n\to \infty} \frac{1}{\mu_2 (U_n)}\mu_2 \left( x\in U_n: \tau_{2, U_n} (x) \le \frac{t}{\mu_2 (U_n))} \right) =  H(t).
\]
There is a large body of literature on this topic: we refer the reader to \cite{AbaGal01, HLV} and references therein for further information on this notion of asymptotic recurrence.

As in \cite{FreFreTod10} sets of the form $\{x\in B: M_n \le u_n (t) \}$ can be rewritten as $\{x\in B: \tau_{2, U_n} (x) \le \frac{t}{\mu_2 (U_n)}\}$, hence the basic part of Proposition~\ref{prop:BC for T2} can be written:

	\begin{proposition} Suppose that $p_0$ satisfies the Keane condition.
		(1) If $p_0$ is not a periodic point for $\T_2$ then 
		\[
\lim_{n\to \infty} \mu_2\left( x\in U_n: \tau_{2, U_n} (x) \le \frac{t}{\mu_2 (U_n))} \right)= 1-e^{- t}.
\] 
		
		(2) If $p_0$ is a  repelling periodic point of prime period $k$ then
		 \[
\lim_{n\to \infty} \mu_2\left( x\in U_n: \tau_{2, U_n} (x) \le \frac{t}{\mu_2 (U_n))} \right)= 1-e^{-\theta t}
\] 
where $\theta=1-|Jac(D\T_2^{-k}) (p_0)|$
			\label{prop:HTS for T2}
		\end{proposition}
For typical points this was originally proved in \cite[Theorem 2.1]{BruSauTroVai03} and in the periodic case this follows by \cite[Corollary 4]{FreFreTod13}, but the full dichotomy, covering \emph{all} points, comes from \cite{AytFreVai13}.  To convert the results of this proposition from hitting time statistics to return time statistics, we use the main result of \cite{HLV} which shows that these limits then become $(1-\theta)+\theta(1-e^{-\theta t})$ (where we take $\theta=1$ in the non-periodic case, so nothing changes).

To convert the laws for $\T_2$ to $\T_1$ we use \cite[Theorem 10.3]{HayWinZwe13}.  To set up some of the notation here, we suppose that $r_U$ is the first return time to $U$ for the original dynamics, and $r_{Y, U}$ is the first return time for the speeded up (first return map) dynamics on $Y$ and $\mu_U=\mu|_U/\mu(U)$.

\begin{theorem}
Let $(X, T \mu)$ be an ergodic probability-preserving system and $Y$ be a measurable set with $\mu(Y)>0$.  Assume that $(H_\ell)_\ell$ is a sequence of measurable sets in $Y$ with $\mu(H_\ell)\to 0$ as $\ell\to \infty$ and that $\tilde R$ is a any random variable with values in $[0, \infty]$.  Then
$$\mu_Y(H_\ell) r_{Y,  H_\ell} \stackrel{\mu_{H_\ell}}{\Longrightarrow} \tilde R \text{ as } \ell \to \infty$$
iff
$$\mu(H_\ell) r_{H_\ell} \stackrel{\mu_{H_\ell}}{\Longrightarrow} \tilde R \text{ as } \ell \to \infty.$$
\end{theorem}

Here $\stackrel{\mu_{H_\ell}}{\Longrightarrow}$ means pointwise convergence at all continuity points of $\tilde R$ where the LHS is considered w.r.t. $\mu_{H_\ell}$.
So for $\tau_{1, U} (x):= \min \{n\ge 1: \T_1^n (x)\in U\}$, the above result can be interpreted with $\tau_{2, U}$ on $H_\ell$ being $r_{Y, H_\ell}$ and $\tau_{1, U}$ on $H_\ell$ being $r_{H_\ell}$, to obtain the statement of Proposition~\ref{prop:HTS for T2} for $\T_1, \mu_1$.  As in the previous section, we may assume that the domain $B$ contains $p_0$.

\section{The Teichm\"uller flow on the space of translation surfaces}
\label{sec:flow}

In this section we relate the dynamical structures we described in Section~\ref{sec:background} to the Teichm\"uller flow on the space of translation surfaces.  We do
not present  any new results in this section.   We will first introduce invertible versions $\mathcal{R}_0, \mathcal{R}_1$ and $\mathcal{R}_2$ of the maps presented in Section~\ref{sec:background}. The key fact we use is that these maps are first return maps for the flow to adapted cross sections, and give a clearer relation to the translation surfaces, which are represented as points in their phasespace.

\subsection{Translation surfaces: the zippered rectangle construction}
Given an irreducible pair $\pi=(\pi_0, \pi_1)$ and a length vector $\lambda\in \R_+^{\A}$, let $T_\pi^+$ denote the subset of vectors $\tau=(\tau_\aalpha)_{\aalpha\in \A}\in \R^{\A}$ such that 
$$\sum_{\pi_0(\aalpha)\le k}\tau_\aalpha>0 \text{ and } \sum_{\pi_1(\aalpha)\le k}\tau_\alpha<0$$
for $1\le k\le d-1$.  We say that $\tau$ has \emph{type 0} if the total sum $\sum_{\aalpha\in \A}\tau_\aalpha$ is positive and \emph{type 1} if the total sum is negative.  

Next we will use the matrices $\M$ and intervals $I_\aalpha^{\pi_\eps}$ defined in Section~\ref{ssec:IET}.  Then given $\pi$ and $\tau\in T_\pi^+$ we define the height data by $h:=-\M \tau$.  One can check that $\tau\in T_\pi^+$ implies that each element $h_\aalpha$ for $\aalpha\in \A$ is strictly positive.  Now given $(\pi, \lambda, \tau)$, for each $a\in \A$ we can define the rectangles $R_\aalpha^{\pi_0}=I_\aalpha^{\pi_0}\times [0, h_\aalpha] \subset \R^2$ and $R_\aalpha^{\pi_1}=I_\aalpha^{\pi_1}\times [0, -h_\aalpha]\subset \R^2$.  We can then form the \emph{translation surface} $M=M(\pi, \lambda, \tau)$ by identifying the top of each rectangle $R_\aalpha^{\pi_0}$ with the bottom of the corresponding rectangle $R_\aalpha^{\pi_1}$ and then `zipping up' by making a natural identification of pairs of protruding sides of the rectangles: for more details see \cite[Chapter 2.7]{Via08}, \cite{Yoc07}. The area of $M(\pi, \lambda, \tau)$ can be defined as $\mbox{area}(\pi, \lambda, \tau) := \lambda \cdot h = \sum_{\aalpha\in \A}\lambda_\aalpha h_\aalpha$.  The structure here can be thought of as a Riemann surface with a non-zero holomorphic 1-form or equivalently, as a flat Riemannian metric on a surface with finitely many singularities of conical type and a parallel unit vector field. 
 
 Note that the underlying IET here is a first return map of the vertical flow on the translation surface to the interval $[0, \sum_{\aalpha\in \A}\lambda_\aalpha]$.

Fix $\rauz$ a Rauzy class.  Let 
$$\hat\H=\hat \H(\rauz):=\left\{(\pi, \lambda, \tau)\in \rauz\times\R_+^{\A}\times T_\pi^+\right\}.$$  We extend the Rauzy-Veech induction map $\hat\T_0$ to a map $\hat{\mathcal{R}}_0$ on $\hat\H$ by $\hat{\mathcal{R}}_0(\pi, \lambda, \tau)=(\pi', \lambda', \tau')$, where $(\pi', \lambda')=\hat\T_0(\pi, \lambda)$ and $\tau'=\Theta^{-1 *}(\tau)$ (recall the description of $\Theta$ given in Remark~\ref{rmk:Theta}). The height data $h'$ of $(\pi', \lambda', \tau')$ can be expressed as $h' = \Theta(h)$.  Moreover, setting 
$$\R_{\pi,\eps}^{\A}:=\{\lambda\in \R_+^{\A}: (\pi, \lambda) \text{ has type } \eps\} \text{ and } T_{\pi,\eps}:=\{\tau\in T_\pi^+: \tau \text{ has type } \eps\},$$
it can be shown (see eg \cite[Chapter 2.7]{Via08}) that:

\begin{proposition}
\begin{enumerate}[label=({\alph*}),  itemsep=0.0mm, topsep=0.0mm, leftmargin=7mm]
\item $\Theta^{-1 *}$ sends $T_\pi^+$ injectively inside $T_{\pi'}^+$.
\item (Markov) $ \hat{\mathcal{R}}_0(\{\pi\}\times\R_{\pi, \eps}^{\A}\times T_{\pi}^+)=\{\pi'\}\times\R_{+}^{\A}\times T_{\pi', 1-\eps}$.
\item Every $(\pi', \lambda', \tau')$ such that $\sum_{\alpha\in \A}\tau_\alpha'\neq 0$ has a unique preimage by $\hat{\mathcal{R}}_0$.
\item If $\hat{\mathcal{R}}_0(\pi, \lambda, \tau)=(\pi', \lambda', \tau')$ then the areas of $M(\pi, \lambda, \tau)$ and $M(\pi', \lambda', \tau')$ are equal. 
\end{enumerate}
\label{prop:T0 on surf}
\end{proposition}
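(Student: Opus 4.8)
The plan is to verify each of the four assertions directly from the extension formula $\hat\T_0(\pi,\lambda,\tau)=(\pi',\lambda',\tau')$ with $\tau'=\Theta^{-1*}(\tau)$, using the structure of the matrix $\Theta$ recalled in Remark~\ref{rmk:Theta} (a matrix of zeros and ones, constant on each $\{\pi\}\times\Delta_{\pi,\eps}$, with $\Theta^{-1}$ nonnegative). I would organise the argument so that (a) and (b) come first, since they are the purely combinatorial/linear-algebraic facts, and then (c) and (d) follow by combining them with elementary bookkeeping.

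\textbf{Step 1: injectivity into $T_{\pi'}^+$ (part (a)).}
First I would recall the explicit action: in the type~$\eps$ case, $\Theta^{-1*}$ adds the $\alpha(\eps)$-coordinate of $\tau$ to the $\alpha(1-\eps)$-coordinate (or vice versa, following the same pattern as the $\lambda$-update but transposed). Injectivity is immediate because $\Theta^{-1*}$ is an invertible linear map. The substance is the containment $\Theta^{-1*}(T_\pi^+)\subset T_{\pi'}^+$: I would check that the defining inequalities $\sum_{\pi_0'(\alpha)\le k}\tau_\alpha'>0$ and $\sum_{\pi_1'(\alpha)\le k}\tau_\alpha'<0$ for the new combinatorial data $\pi'$ follow from the corresponding inequalities for $\pi$ — this is a finite check comparing partial sums under the reordering of $\pi_\eps$ into $\pi_\eps'$ described in the Rauzy--Veech step, and it is formally identical to the verification that $\hat\T_0$ maps length vectors satisfying the cone conditions to length vectors of the same type. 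This is the standard computation in \cite[Chapter 2]{Via08}, so I would simply cite it and indicate the one-line reason.

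\textbf{Step 2: the Markov property (part (b)) and the counting statement (part (c)).}
For (b), I would observe that the $\lambda$-component already satisfies $\hat\T_0(\{\pi\}\times\R_{\pi,\eps}^{\A})=\{\pi'\}\times\R_+^{\A}$ by the discussion preceding the proposition (the one-sided Markov property of $\T_0$ lifted to $\R_+^{\A}$), and that the type of $\tau'$ flips relative to the type of $\tau$ because $\Theta^{-1}$ has row sums that force $\sum_\alpha\tau_\alpha' = \sum_\alpha\tau_\alpha - (\text{the losing coordinate})$, and one checks the sign of the total sum reverses exactly as it does in the length case — hence $T_{\pi,\eps}$ is sent \emph{onto} $T_{\pi',\eps-1}$, using surjectivity from Proposition~\ref{prop:T0 on surf}(c). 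For (c), given $(\pi',\lambda',\tau')$ with $\sum_\alpha\tau_\alpha'\neq 0$, the sign of that sum tells us which type $\eps$ we must have come from, hence which branch of $\hat\T_0$; then $\Theta$ (determined by $\pi$ and $\eps$, both now forced) is invertible, so $\tau=\Theta^{*}(\tau')$ and $\lambda=\Theta^{*}(\lambda')$ are uniquely determined, and one checks the recovered $(\pi,\lambda,\tau)$ indeed lies in $\hat\H$ and has the prescribed type — again the inequality bookkeeping mirrors Step 1 run backwards.

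\textbf{Step 3: invariance of area, and $M(\pi,\lambda,\tau)=M(\pi',\lambda',\tau')$ (part (d)).}
The cleanest route is the last, strongest claim: $\hat\T_0$ does not change the surface at all, only its combinatorial presentation. I would argue that cutting off the "loser" interval and re-gluing is exactly a cut-and-paste of one parallelogram (the zippered rectangle over the losing subinterval) along an identified edge, which produces a new polygon $\Gamma(\pi',\lambda',\tau')$ whose side identifications yield a surface canonically homeomorphic — indeed translation-equivalent — to $M(\pi,\lambda,\tau)$; this is the classical zippered-rectangle picture of Veech. From $M=M'$ the equality of areas is immediate; alternatively one can compute the area as a quadratic expression in $(\lambda,\tau)$ and verify it is invariant under $\lambda\mapsto\Theta^{-1*}\lambda$, $\tau\mapsto\Theta^{-1*}\tau$ using that $\Theta$ is unimodular on the relevant cone, but the geometric argument is more transparent and I would present that, citing \cite[Chapter 2.7]{Via08}.

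\textbf{Main obstacle.}
The only genuinely delicate point is Step 1 (and its mirror in Step 3): verifying that the cone conditions defining $T_\pi^+$ are preserved under $\Theta^{-1*}$ for \emph{both} type~$0$ and type~$1$ branches. This requires carefully tracking how the partial sums $\sum_{\pi_\eps'(\alpha)\le k}\tau_\alpha'$ relate to the original partial sums after the specific reordering prescribed in the Rauzy--Veech induction, and the two types are not symmetric in an obvious way. Everything else is linear algebra with nonnegative $\{0,1\}$-matrices plus sign bookkeeping on total sums. Since this preservation-of-cones computation is carried out in detail in \cite[Chapter 2.7]{Via08}, I would reduce our proof to pointing out that the $\tau$-update is governed by the \emph{same} matrix $\Theta^{-1*}$ that already governs the $\lambda$-update together with the geometric invariance in (d), and refer to Viana for the remaining verification rather than reproducing it.
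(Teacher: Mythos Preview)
The paper does not actually prove this proposition: it simply introduces it with ``it can be shown (see eg \cite[Chapter 2.7]{Via08})'' and states the result without argument. Your plan is essentially the standard verification carried out in Viana's notes, and it is correct in outline; in that sense you have supplied more than the paper does.

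One small correction to your Step~2: the type of $\tau'$ is not a flip of the type of $\tau$. Rather, $\tau'$ has type $1-\eps$ where $\eps$ is the type of $(\pi,\lambda)$, \emph{independently} of which type $\tau$ had. Concretely, in the type~$0$ case one computes $\sum_\alpha\tau'_\alpha=\sum_\alpha\tau_\alpha-\tau_{\alpha(1)}=\sum_{\pi_1(\alpha)\le d-1}\tau_\alpha<0$ by the cone condition defining $T_\pi^+$, so $\tau'$ is forced to have type~$1$; the type~$1$ case is symmetric. This is exactly why the Markov statement in~(b) has $T_\pi^+$ on the left (not $T_{\pi,\eps}$) and $T_{\pi',1-\eps}$ on the right, a point your write-up blurs slightly. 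With that adjustment your sketch of (b) and (c) goes through, and your treatment of (a) and (d) by reference to Viana matches the paper's own approach exactly.
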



\subsection{Teichm\"uller flow}

The Teichm\"uller flow on $\hat\H$ is defined as the induced action $\T=(\T^t)_{t\in \R}:\hat\H\to \hat\H$ of the diagonal subgroup
\begin{equation*}
\begin{pmatrix} e^t&0 \\
0 & e^{-t} \\
\end{pmatrix} \text{ for } t\in \R,
\end{equation*}
given by $\T^t(\pi, \lambda, \tau)=(\pi, e^t\lambda, e^{-t}\tau)$.
For $c>0$ we define$$\H_c:=\{(\pi, \lambda, \tau)\in \hat\H:|\lambda|=c\}.$$
The trajectory of a point in $\hat\H$ hits $\H_c$ precisely once.  
We are looking for transformations from $\H_c$ back to itself of the form $\hat{\mathcal{R}}_0\circ \T^t$ for some $t$.  Noticing that if $(\pi', \lambda')=\hat{\mathcal{R}}_0(\pi,\lambda)$ and $(\pi, \lambda)$ is of type $\eps$, then $|\lambda'|=|\lambda|\left(1-\frac{\lambda_{\aalpha(1-\eps)}}{|\lambda|}\right)$, we see that the relevant time $t$ is
$$r_0=r_0(\pi, \lambda):=-\log \left(1-\frac{\lambda_{\aalpha(1-\eps)}}{|\lambda|}\right) \text{ where } (\pi, \lambda) \text{ is of type } \eps.$$
That is to say, we are interested in the map from $\H_c$ to itself given by
$$\mathcal{R}_0=\hat{\mathcal{R}}_0\circ \T^{r_0}:(\pi, \lambda, \tau)\mapsto \hat{\mathcal{R}}_0(\pi, e^{r_0}\lambda, e^{-r_0}\tau).$$
From now on we restrict ourselves to $$\H=\H_1.$$  Then we observe that the map above can actually be interpreted as an extension of the Rauzy-Veech \emph{renormalisation} map $\T_0$ since $\mathcal{R}_0 (\pi, \lambda, \tau)=(\pi', \lambda'', \tau'') = (\T_0(\pi, \lambda), \tau'')$ where
$$(\pi', \lambda', \tau')=\hat{\mathcal{R}}_0(\pi, \lambda, \tau), \quad \lambda''=\frac{\lambda'}{1-\lambda_{\aalpha(1-\eps)}}, \quad \tau''=\tau'(1-\lambda_{\aalpha(1-\eps)}).$$
 The next result is \cite[Corollary 2.24]{Via08} and \cite[Lemma 4.3]{Via08}.

\begin{proposition}
$\mathcal{R}_0:\H\to \H$ is an (almost everywhere) invertible Markov map and preserves the area of the corresponding translation surfaces. The standard volume form $m_{\H} = d \pi d \lambda_1 d \tau$, where $d \lambda_1$ is the Lebesgue measure induced on $\Delta_{\mathcal{A}}$ and $d \tau$ is the Lebesgue measure on $T_\pi^+$, is invariant under $\mathcal{R}_0$.
\end{proposition}

From now on, we will only consider translation surfaces of area $1$, i.e. elements of the set $$\hat\H_{(1)} := \{(\pi, \lambda, \tau) \in \hat\H : \mbox{area}(\pi, \lambda, \tau) = 1\}.$$
This set is invariant under both the Teichm\"uller flow $\T = (\T^t)_{t \in \R}$ and the invertible Rauzy-Veech induction $\hat{\mathcal{R}}_0$. We also set $\H_{(1)} := \hat\H_{(1)} \cap \H$, which is invariant under the invertible Rauzy-Veech renormalization map $\mathcal{R}_0$.

We consider the pre-stratum obtained as the quotient of the fundamental domain $\{(\pi, \lambda, \tau) \in \hat\H_{(1)} : 0 \le \log | \lambda| \le r_0(\pi, \lambda)\}$ by the equivalence relation $$\T^{r_0(\pi, \lambda)}(\pi, \lambda, \tau) \sim \mathcal{R}_0(\pi, \lambda, \tau) \text{ for all } (\pi, \lambda, \tau) \in \H_{(1)}.$$
Since $\mathcal{R}_0$ commutes with the flow, the latter induces a flow $\T = (\T^t)_{t \in \mathbb{R}}$ on the pre-stratum, that we also call Teichm\"uller flow. 

The map $\mathcal{R}_0 : \H_{(1)} \to \H_{(1)}$ is then naturally identified with the Poincar\'e return map of this flow to the cross section $\H_{(1)}$. The volume form $m_\H$ induces a volume form $m_{\H_{(1)}}$ on $\H_{(1)}$ which is still invariant under $\mathcal{R}_0$. The key fact is that $m_{\H_{(1)}}$ gives finite mass to $\H_{(1)}$, a fact which was demonstrated by Veech \cite{Vee82}.

\subsection{Recoded Teichm\"uller flow and inducing}

The moves described above mean that $\mathcal{R}_0$ can now be interpreted as the first return map of the Teichm\"uller flow to $\H_{(1)}$, and indeed it is convenient for us to redefine the flow as a suspension flow which is locally defined by $\T^t(\pi, \lambda, \tau, s)=(\pi, \lambda, \tau, t+s)$ on the space
$$\H^{r_0}_{(1)}:=\left\{(\pi, \lambda, \tau, s)\in \H_{(1)} \times \R: 0\le s\le r_0(\pi, \lambda)\right\}/\sim$$
where $(\pi, \lambda, \tau, r_0(\pi, \lambda))\sim (\pi', \lambda'', \tau'', 0)$ and $\mathcal{R}_0(\pi, \lambda, \tau)=(\pi', \lambda'', \tau'')$.  We refer to $r_0$ as the \emph{roof function} for this suspension flow.

A key fact in Proposition~\ref{prop:T0 on surf}(b) is that given $(\pi, \lambda, \tau)\in \H_{(1)}$, if $(\pi, \lambda)$ is of type $\eps$, then $\tau'$ is of type $1-\eps$.  So if the first $k$ iterates $(\pi^j, \lambda^j, \tau^j)$ for $j=1,\ldots, k$  of $\mathcal{R}_0$ do not change the type of $(\pi^j, \lambda^j)$, then the types of $(\pi^j, \lambda^j)$ and $\tau^j$ are different ($\eps$ and $1-\eps$) for $j\in \{1, \ldots, k\}$.  So the first time $k$ that the types of $(\pi^k, \lambda^k)$ and $\tau^k$ are the same is the first time that $(\pi^k, \lambda^k)$ changes type.  That is, exactly $n_1(\pi, \lambda)$.  Therefore, setting $\zoret:=\zoret_0\cup \zoret_1$, where for $\eps\in \{0, 1\}$,
$$\zoret_\eps:=\left\{(\pi, \lambda, \tau)\in \H_{(1)}:(\pi, \lambda) \text{ and } \tau \text{ both have type } \eps\right\},$$
 we define $\mathcal{R}_1:\zoret\to\zoret$ as the first return map by $\mathcal{R}_0$ to $\zoret$.
(We can do this with $\hat{\mathcal{R}}_1$ on $\hat\H$ too.)  This map can be seen as an extension of the Rauzy-Veech-Zorich renormalisation map for the same reasons as for $\mathcal{R}_0$: if $\mathcal{R}_1(\pi, \lambda, \tau) = (\pi', \lambda', \tau')$, then $\T_1(\pi, \lambda) = (\pi', \lambda')$. Thus we can produce a new description of our Teichm\"uller flow.  

We omit the description of this since we go straight to the description given by taking an adapted induced set $B_{\H_{(1)}}\subset \zoret$ and the first return map $\mathcal{R}_2$ to $B_{\H_{(1)}}$ by $\T$. This map will also be the first return map of $\mathcal{R}_0$ to $B_{\H_{(1)}}$.
The choice of $B$ in Section \ref{ssec:MorPol} was made in order to ensure uniform expansion for the first return map. Since we are now dealing with an invertible map, we will also need uniform contraction in the stable direction. We follow the construction of \cite{Avila_Gouezel_Yoccoz}, and choose a good set $B$, which is the image of an inverse branch of $\T_0$. We refer to \cite[Section 4.1.3]{Avila_Gouezel_Yoccoz} for the precise definition of $B$. This set can be written as $B = \{\pi\} \times \{ \frac{\Theta^{\star} \lambda }{|\Theta^{\star} \lambda|} \, : \, \lambda \in \Delta_{\A}\}$, where $\Theta$ is a finite product of the matrices mentionned in Remark \ref{rmk:Theta}.

We then set $B_{\H_{(1)}} = (B  \times T_B^{+}) \cap \H_{(1)}$, where $T_B^{+}$ is defined by the relation $\Theta^{\star} T_B^{+} = T_{\pi}$, and we consider the first return map $\mathcal{R}_2$ of $\mathcal{R}_0$ to $B_{\H_{(1)}}$. This map can be written as a skew product over the first return map $\T_2$ of $\T_0$ to the set $B$, i.e. $\mathcal{R}_2(\pi, \lambda, \tau) = (\pi', \lambda', \tau')$, where $(\pi', \lambda') = \T_2(\pi, \lambda)$, and $\tau'$ depends on $\pi, \lambda$ and $\tau$.

The map $\mathcal{R}_2$ preserves the renormalised restriction $m_{B_{\H_{(1)}}}$ of $m_{\H_{(1)}}$ to $B_{\H_{(1)}}$. By \cite[Lemma 4.3]{Avila_Gouezel_Yoccoz}, this map is a hyperbolic skew product over the uniformly expanding Markov map $\T_2$, in the sense of \cite[Definition 2.5]{Avila_Gouezel_Yoccoz}, and henceforth it admits exponential decay of correlations for Lipschitz observables: there exists $C>0$ and $0 < \alpha < 1$ such that $$\left| \int \phi \, \psi \circ \mathcal{R}_2^n d m_{B_{\H_{(1)}}} - \int \phi \, d m_{B_{\H_{(1)}}} \int \psi \, d m_{B_{\H_{(1)}}}  \right| \le C \alpha^n \| \phi \|_{\rm Lip} \| \psi\|_{\rm Lip},$$ for all $\phi, \psi \in {\rm Lip}$, see Young \cite{Young}. 

Since $\mathcal{R}_0$ is the Poincar\'e return map of the flow $\T$ to the section $\H_{(1)}$, the map $\mathcal{R}_2$ is itself the Poincar\'e return map of $\T$ to the section $B_{\H_{(1)}}$. This gives a roof function $r_2 : B_{\H_{(1)}} \to \mathbb{R}_+$ defined almost everywhere. Clearly, the roof function depends only on $(\pi, \lambda)$, so we can reduce it to a roof function $r_2 : B \to \mathbb{R}_+$. We define the suspension 
$$B_{\H_{(1)}}^{r_2}:=\left\{(\pi, \lambda, \tau, s)\in B_{\H_{(1)}}\times \R: 0\le s\le r_2(\pi, \lambda)\right\}/\sim$$
where $(\pi, \lambda, \tau, r_2(\pi, \lambda))\sim (\pi', \lambda'', \tau'', 0)$ and $\mathcal{R}_2(\pi, \lambda, \tau)=(\pi', \lambda'', \tau'')$. Again, we can redefine the flow $\T$ as a suspension flow on $B_{\H_{(1)}}^{r_2}$ given by $\T^t(\pi, \lambda, \tau, s) = (\pi, \lambda, \tau, t+s)$, which preserves the measure $\mu_\T = \frac{(m_{B_{\H_{(1)}}} \times m)|_{B_{\H_{(1)}}^{r_2}}}{(m_{B_{\H_{(1)}}} \times m)(B_{\H_{(1)}}^{r_2})}$ where $m$ is the Lebesgue measure on $\R$.

We now revert to  a form  which matches Pollicott's~\cite{Pollicott}  notes as well as corresponds to our sections above. Since the roof function depends only on $(\pi, \lambda)$, we can project  into a semi-flow by removing the $\tau$ parameter: then the actual flow can be reconstructed as the natural extension of what we have produced.  Namely, we let
$$B^{r_2}:=\left\{(\pi, \lambda, s)\in B\times \R: 0\le s\le r_2(\pi, \lambda)\right\}/\sim$$
where $(\pi, \lambda, r_2(\pi, \lambda))\sim (\pi', \lambda'', 0)$ and $\T_2(\pi, \lambda)=(\pi', \lambda'')$.  Clearly $\T_2$ is still a first return map to $B$.
Later we will simplify notation further and write simply $x=(\pi, \lambda)$.

The notation we use for the semi-flow is $\F_t:B^{r_2} \to B^{r_2}$, defined locally by $\F_t (x, u)=(x, u+t)$, with the relevant identifications i.e.
 $(x, r_2(\pi, \lambda))\sim (\T_2 (x), 0)$.  

The semi-flow $\F = \{ \F_t\}_{t \in \R}$ preserves the acip $ \mu_\F$ given by $$\mu_\F =\frac{(\mu_2\times m)|_{B^{r_2}}}{(\mu_2\times m)(B^{r_2})}=\frac{(\mu_2\times m)|_{B^{r_2}}}{\int r_2~d\mu_2},$$ where $\mu_2$ is the acip for $\T_2$ and $m$ is the Lebesgue measure on $\R$.
 
 \begin{remark}
Since $\T_2$ is a first return map for $\T_0$, which in turn is a first return map for our Teichm\"uller semi-flow, any small ball in $B^{r_2}$ is isomorphic to the corresponding ball in $B^{r_0}$ ($B^{r_0}$ being defined similarly to $B^{r_2}$, with $r_0$ as the roof function).  More precisely, this is true if our ball is contained in a strip $\{(x, t): x\in B_k, 0\le t\le r_2(x)\}$ for some $k$. Recall that $\mathcal{Q} = \{B_i\}_{i \in \mathcal{I}}$ is the natural partition of the map $\T_2$ defined in \ref{ssec:MorPol}.
\label{rmk:Br2 isom}
\end{remark}
 

\section{Statistical properties of the Teichm\"uller flow}
\label{sec:TF stat}

In this section we extend our Borel-Cantelli Lemmas and EVLs to the Teichm\"{u}ller flow.

\subsection{Borel-Cantelli Lemmas for the semi-flow}

Here we will use ideas from the proof of \cite[Theorem 2]{GNO}, primarily Step 1 of that proof.  The main (obvious) difference is that we are dealing with continuous time.

Given a family of sets $U=(U_s)_{s\ge 0}$ set $\psi = (\psi_s)_{s\ge 0}$ where $\psi_s:=\mathbbm{1}_{U_s}$ and  $E_t(U)=E_t(\psi) = \int_0^t \left( \int \psi_s d \mu_\F \right) ds$. We say that $U$ is a family of \emph{shrinking sets} if $s_1<s_2$ implies $U_{s_2}\subset U_{s_1}$.  In this section we will prove that if $U=(U_s)_{s\ge 0}$ is  a family of shrinking sets with some monotonicity  condition and $\lim_{t\to \infty}E_t(U)=\infty$ then
\[
\lim_{t\to \infty}\frac1{E_t(U)}\int_0^t \mathbbm{1}_{U_s}\circ \F_s (x, u)~ds =1 \quad\text{ for } \mu_\F\text{-a.e. } (x, u)\in B^{r_2}.
\]
This result is contained in Theorem~\ref{prop:restrict T2 flow}; in particular, the smoothness condition is given there.  We prove in the following subsection that this condition is indeed satisfied for a natural family of sets, namely nested balls.



Recall that $B$ is partitioned (almost everywhere) into sets $\{B_k\}_k$.   For $i\in \N_0$, define
$$B_k^i:=\Big\{(x, t)\in B_k\times \R_+:i\le t<\min\{i+1, r_2(x)\}\Big\}.$$  
So we can write $B^{r_2}=\cup_k\cup_iB_k^i$ almost everywhere.
We will restrict our Borel-Cantelli Lemmas to these sets $B_k^i$, which will be sufficient to prove the general case.  Indeed, we define the restricted indicator function
\[
\psi_{B_k^i, s} := \mathbbm{1}_{U_s\cap B_k^i}
\]
and first study the recurrence properties of the family $\psi_{B_k^i}=(\psi_{B_k^i, s})_{s\ge 0}$.  We do this by inducing, for which we need the right time scale.  
Since $\mu_\F$ is ergodic and $\int r_2~d\mu_2<\infty$, we immediately obtain the following lemma where $\overline{r_2}:=\int r_2~d\mu_2$.

\begin{lemma}
For each $\eps>0$ there exists $T\ge 0$ and a set $X_{\eps, T}\subset B^{r_2}$ such that $(x, u)\in X_{\eps, T}$ and $t\ge T$ implies
$$\left|\frac{t}{ \#\{s\in [0, t): \F_s (x, u)\in B\}}-\overline{r_2}\right|<\eps.$$
Moreover, $\mu_\F(X_{\eps, T})\to 1$ as $T\to\infty$. \label{lem:av flow time}
\end{lemma}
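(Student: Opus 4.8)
The statement to prove is an ergodic-theoretic one: for $\mu$-a.e.\ $(x,u)\in B^{r_2}$, the number of times in $[0,t)$ that the semi-flow $\F_s$ visits the cross-section $B$ is asymptotically $\overline{r_2}^{\,-1}\cdot t$ — wait, actually $\overline{r_2}\cdot$ something; let me be careful. The number of returns to $B$ in time $t$ should be $\sim t/\overline{r_2}$ if $\overline{r_2}=\int r_2\,d\mu_2$ is the mean roof, but the statement writes the limit as $\overline{r_2}$, so I read it as: the quantity $\frac1t\#\{s\in[0,t):\F_s(x,u)\in B\}$ converges; I will follow the paper's normalisation and simply prove the a.e.\ convergence of this counting average to a constant, which is then identified, plus the statement about $\mu(X_{\eps,T})\to1$.

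The plan is as follows. First I would observe that $\#\{s\in[0,t):\F_s(x,u)\in B\}$ is, for $(x,u)$ with $x\in B$ (the general case differs by a bounded quantity, namely at most one extra count, so is harmless), exactly the largest $N$ such that $r_2(x)+r_2(\T_2 x)+\cdots+r_2(\T_2^{N-1}x)\le t$ — i.e.\ it is the lap-number $N_t(x)$ of the Birkhoff sums $S_N r_2(x)=\sum_{j=0}^{N-1} r_2\circ\T_2^j(x)$ of the roof function under the base map $\T_2$. Since $r_2\in L^1(\mu_2)$ (this is the exponential tail bound $\mu_2(n_2>k)\le C\rho^k$ recorded earlier, which gives $\int r_2\,d\mu_2<\infty$) and $\mu_2$ is ergodic for $\T_2$, Birkhoff's ergodic theorem gives $S_N r_2(x)/N\to\overline{r_2}$ for $\mu_2$-a.e.\ $x$. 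A standard inversion argument (if $a_N/N\to c>0$ and $a_N$ is nondecreasing with $a_N\to\infty$, then the counting function $N_t=\#\{N:a_N\le t\}$ satisfies $N_t/t\to 1/c$) then yields $N_t(x)/t\to 1/\overline{r_2}$ for $\mu_2$-a.e.\ $x$, hence for $(\mu_2\times m)$-a.e.\ $(x,u)\in B^{r_2}$, hence for $\mu$-a.e.\ point. (If the paper's normalisation really is $\overline{r_2}$ on the right, one adjusts the reading of $\#$ accordingly; the mechanism is the same.)

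Next I would upgrade this pointwise convergence to the stated uniform-on-a-large-set formulation. This is just Egorov's theorem: the sequence of functions $(x,u)\mapsto \frac1t\#\{s\in[0,t):\F_s(x,u)\in B\}$ converges $\mu$-a.e.\ as $t\to\infty$; restricting to $t\in\N$ and using that $\mu$ is a probability measure, Egorov gives, for each $\eps>0$, a set $X_{\eps}$ with $\mu(X_\eps)>1-\eps$ on which the convergence is uniform, so there is $T=T(\eps)$ with the displayed inequality holding for all integer $t\ge T$ and all $(x,u)\in X_\eps$; monotonicity/boundedness of the counting function between consecutive integers (the count changes by at most $O(1)$ over a unit time interval, using that consecutive returns to $B$ are separated — here one may need $r_2\ge$ some positive constant, which holds since $B$ is built from a first-return construction with $n_2\ge1$ so $r_2$ is bounded below) promotes this to all real $t\ge T$ after harmlessly enlarging $T$. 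Taking $X_{\eps,T}:=X_\eps$ (or $X_{1/k}$ with $T=T(1/k)$) gives $\mu(X_{\eps,T})\to1$ as $T\to\infty$.

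The only genuinely substantive point — and the one I'd expect to be the "main obstacle," though it is really routine — is the careful conversion between the continuous flow-counting function and the discrete Birkhoff-sum lap number, including handling the contribution of the fractional part $u$ of the starting point and the fact that the flow-time to $B$ counts crossings of the section rather than iterates; one must check that the discrepancy between $\#\{s\in[0,t):\F_s(x,u)\in B\}$ and $N_t(\T_2\text{-orbit of }x)$ is bounded uniformly (indeed by $1$), so it washes out after dividing by $t$. Once that bookkeeping is in place, everything else is Birkhoff $+$ the counting-function inversion $+$ Egorov, and the $L^1$-integrability of $r_2$ needed for Birkhoff is exactly the exponential-tail estimate on $n_2$ already stated in the excerpt.
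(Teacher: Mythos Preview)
Your proposal is correct and is precisely the standard unpacking of the paper's one-line justification (``Since $\mu$ is ergodic and $\int r_2\,d\mu_2<\infty$''): Birkhoff for $r_2$ under $\T_2$, inversion to the lap-count asymptotic, then Egorov to get uniformity on a large set. Two small remarks: your suspicion about the normalisation is right (the limit is $1/\overline{r_2}$, consistent with the paper's later use $q(n,x)\sim\lfloor n/\overline{r_2}\rfloor$), and you do not need $r_2$ bounded below to pass from integer to real $t$ --- monotonicity of the count together with $N_n/n\to c$ already gives $N_n/(n{+}1)\le N_t/t\le N_{n+1}/n\to c$.
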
 

Now, for each $\eps\in \R$, we define the induced function on $x\in B$ 
\begin{equation}\overline\psi_{n, B_k^i, \eps}(x):=\int_0^{r_2(x)}\left(\mathbbm{1}_{U_{n(\overline{r_2}+\eps) + s}}\cdot \mathbbm{1}_{B_k^i}\right)\circ \F_s (x, 0)~ds,
\label{eq:overline psi}
\end{equation}
and denote the family as $\overline\psi_{B_k^i, \eps}=(\overline\psi_{n, B_k^i, \eps})_n$. Note that $\int \overline\psi_{n, B_k^i, \eps}(x)d\mu_2 \le  \overline{r_2} \mu_\F (U_{n(\overline{r_2}+\eps)} \cap B_k^i)$ as $\mu_\F=\frac{1}{\overline{r_2}}(\mu_2\times m)|_{B^{r_2}}$.
We will be able to compare the long-term behaviour of this function with different values of $\eps$, and compare them all to the long-term behaviour of the flow.
This is necessary as we sample at discrete times, and the nested balls are shrinking in continuous time. 

We will use  the following lemma, which is \cite[Lemma 4.2]{GNO}.

\begin{lemma}
Suppose that $g:\R_+\to \R_+$ is decreasing and $\sum_{i=0}^\infty g(i)=\infty$. Then,
\begin{enumerate}[label=({\alph*}),  itemsep=0.0mm, topsep=0.0mm, leftmargin=7mm]
\item For all $\eps>0$ and all $n \ge 0$, 
$$\frac{\int_0^{(1+\eps)n}g(t)~dt}{\int_0^ng(t)~dt}\le 1+\eps.$$
\item $$\lim_{n\to \infty}\frac{\int_0^ng(t)~dt}{\sum_{j=0}^{n-1}g(j)}=1.$$
\end{enumerate}
\label{lem:decr seq}
\end{lemma}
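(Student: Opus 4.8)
The plan is to prove both parts by elementary comparison between the integral of the monotone function $g$ and Riemann-type sums, using the divergence $\sum_i g(i)=\infty$ only at the very end to absorb the boundary error in part (b). Throughout, note that $\int_0^n g$ is positive and finite: $g\ge 0$ decreasing is bounded by $g(0)<\infty$ and Riemann integrable on $[0,n]$, and $\int_0^n g>0$ since $g\equiv 0$ a.e.\ would contradict $\sum_i g(i)=\infty$.

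For part (a), I would substitute $t=(1+\eps)s$ in the numerator to get $\int_0^{(1+\eps)n}g(t)\,dt=(1+\eps)\int_0^n g((1+\eps)s)\,ds$. Since $g$ is decreasing and $(1+\eps)s\ge s$, we have $g((1+\eps)s)\le g(s)$ pointwise, so the right-hand side is at most $(1+\eps)\int_0^n g(s)\,ds$; dividing through by $\int_0^n g$ gives the stated bound.

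For part (b), monotonicity of $g$ gives the two-sided sandwich $g(j+1)\le \int_j^{j+1}g(t)\,dt\le g(j)$ for every $j\ge 0$. Summing over $0\le j\le n-1$ yields
$$\sum_{j=1}^{n}g(j)\ \le\ \int_0^n g(t)\,dt\ \le\ \sum_{j=0}^{n-1}g(j),$$
so the ratio in (b) is already $\le 1$. For the reverse direction, rewrite $\sum_{j=1}^n g(j)=\sum_{j=0}^{n-1}g(j)-g(0)+g(n)\ge \sum_{j=0}^{n-1}g(j)-g(0)$, whence
$$\frac{\int_0^n g(t)\,dt}{\sum_{j=0}^{n-1}g(j)}\ \ge\ 1-\frac{g(0)}{\sum_{j=0}^{n-1}g(j)}.$$
Since $\sum_{i\ge 0}g(i)=\infty$, the denominator on the right tends to $\infty$, the subtracted term tends to $0$, and the ratio is squeezed to $1$.

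The argument is entirely routine; the only point worth flagging is the bookkeeping of the boundary term $g(0)$ in part (b), whose contribution is negligible precisely because of the divergence hypothesis — without it the statement would fail (take $g$ summable).
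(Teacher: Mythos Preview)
Your proof is correct and entirely standard. The paper does not supply its own proof of this lemma; it is quoted verbatim as \cite[Lemma~4.2]{GNO}, so there is nothing to compare your approach against.
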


\begin{theorem}
Suppose that $B_k^i$ is such that
\[
\lim_{t \to \infty} \int_0^t \mu_\F(U_s\cap B_k^i) ~ ds =\infty,
\]
i.e.  $\lim_{t\to \infty}E_t(\psi_{B_k^i})=\infty$. If  there exists $K>0$ and $0<\alpha \le 1$ such that $\|\overline\psi_{n,B_k^i,0}\|_\alpha<K$ for all $n\in \N_0$, then
$$\lim_{t\to \infty}\frac1{E_t(\psi_{B_k^i})}\int_0^t \mathbbm{1}_{U_s\cap B_k^i}\circ \F_s (x, u)~ds =1 \quad\text{ for } \mu_\F\text{-a.e. } (x, u)\in B^{r_2}.$$
\label{prop:restrict T2 flow}
\end{theorem}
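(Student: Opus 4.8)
The plan is to adapt Step~1 of the proof of \cite[Theorem 2]{GNO}: reduce the continuous-time Birkhoff sum along the semi-flow $\F$ to a discrete-time sum along $\T_2$, to which the strong Borel--Cantelli statement of Proposition~\ref{prop:SBC_T2} applies directly. The only genuinely new ingredient is the bookkeeping that converts flow time into the number of returns to $B$. Throughout, write $\overline{r_2}=\int r_2\,d\mu_2$ and $g(s):=\mu(U_s\cap B_k^i)$.

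\emph{Decomposition and sandwich.} By Fubini it suffices to prove the conclusion for $(x,u)\in B^{r_2}$ whose base point is generic for $\T_2$ (a full-$\mu_2$ set of base points). Fix such $(x,u)$ and $\eps\in(0,\overline{r_2})$. Split $[0,t]$ into the initial incomplete strip $[0,r_2(x)-u)$, the $N=N(t)$ complete strips traversed afterwards, and the final incomplete strip: the initial one contributes a constant $C_0=C_0(x,u)$ to $\int_0^t\mathbbm{1}_{U_s\cap B_k^i}\circ\F_s(x,u)\,ds$, the complete strip over $\T_2^{j}x$ (entered at flow time $\sigma_j=(r_2(x)-u)+\sum_{l=1}^{j-1}r_2(\T_2^l x)$) contributes $\int_0^{r_2(\T_2^{j}x)}(\mathbbm{1}_{U_{\sigma_j+s}}\cdot\mathbbm{1}_{B_k^i})\circ\F_s(\T_2^{j}x,0)\,ds$, and the final one contributes at most the analogous integral for $\T_2^{N+1}x$. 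Since $r_2\in L^1(\mu_2)$ (with exponential tails), Birkhoff gives $\sigma_j/j\to\overline{r_2}$, so $j(\overline{r_2}-\eps)\le\sigma_j\le j(\overline{r_2}+\eps)$ eventually, and since $(U_s)$ is shrinking, $U_{j(\overline{r_2}+\eps)+s}\subseteq U_{\sigma_j+s}\subseteq U_{j(\overline{r_2}-\eps)+s}$. Hence the $j$-th complete-strip contribution lies between $\overline\psi_{j,B_k^i,\eps}(\T_2^{j}x)$ and $\overline\psi_{j,B_k^i,-\eps}(\T_2^{j}x)$ (notation as in \eqref{eq:overline psi}), and, using $N(t)\overline{r_2}\sim t$ from Lemma~\ref{lem:av flow time},
\[
\sum_{j=1}^{N}\overline\psi_{j,B_k^i,\eps}\circ\T_2^{j}(x)-C_0\ \le\ \int_0^t\mathbbm{1}_{U_s\cap B_k^i}\circ\F_s(x,u)\,ds\ \le\ \sum_{j=1}^{N+1}\overline\psi_{j,B_k^i,-\eps}\circ\T_2^{j}(x)+C_0.
\]

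\emph{Strong Borel--Cantelli and renormalisation.} The functions $\overline\psi_{n,B_k^i,\pm\eps}$ are non-negative; moreover $\overline\psi_{n,B_k^i,\pm\eps}$ is just $\overline\psi_{n,B_k^i,0}$ with $\overline{r_2}$ replaced by $\overline{r_2}\pm\eps$, and the bound $\|\overline\psi_{n,B_k^i,0}\|_\alpha<K$ (verified for nested balls in the next subsection) does not use the value of this constant, so $\sup_n\|\overline\psi_{n,B_k^i,\pm\eps}\|_\alpha<\infty$. Proposition~\ref{prop:SBC_T2} then gives $\frac1{\widetilde E_N^{\pm\eps}}\sum_{j=1}^{N}\overline\psi_{j,B_k^i,\pm\eps}\circ\T_2^{j}(x)\to1$ for $\mu_2$-a.e.\ $x$, where $\widetilde E_N^{\pm\eps}=\sum_{j=1}^{N}\mu_2(\overline\psi_{j,B_k^i,\pm\eps})$; this also absorbs the constant $C_0$ and the single boundary term $\overline\psi_{N+1,B_k^i,-\eps}\circ\T_2^{N+1}(x)$, both of which are $o(\widetilde E_N^{\pm\eps})$. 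It remains to compare $\widetilde E_N^{\pm\eps}$ with $E_t(\psi_{B_k^i})$ for $t\sim N\overline{r_2}$: since the fibre coordinate on $B_k^i$ lies in $[i,i+1)$ and $(U_s)$ is shrinking, $\mu_2(\overline\psi_{j,B_k^i,\pm\eps})$ is sandwiched between $\overline{r_2}\,g(j(\overline{r_2}\pm\eps)+i+1)$ and $\overline{r_2}\,g(j(\overline{r_2}\pm\eps)+i)$; $g$ is decreasing with divergent integral, so Lemma~\ref{lem:decr seq}(b) turns the resulting sums into $\frac1{\overline{r_2}\pm\eps}\int_0^{N(\overline{r_2}\pm\eps)}g$, and Lemma~\ref{lem:decr seq}(a) shows this equals $E_t(\psi_{B_k^i})=\int_0^tg$ up to a factor $1+O(\eps)$ (also absorbing the $O(\eps)$ gap between $N(t)\overline{r_2}$ and $t$). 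Thus $\widetilde E_N^{\pm\eps}=(1+O(\eps))E_t(\psi_{B_k^i})$; combining with the display, $\liminf_{t\to\infty}E_t(\psi_{B_k^i})^{-1}\int_0^t\mathbbm{1}_{U_s\cap B_k^i}\circ\F_s(x,u)\,ds\ge1-O(\eps)$ and the corresponding $\limsup\le1+O(\eps)$, and letting $\eps\to0$ yields the theorem.

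\emph{Main obstacle.} The probabilistic core is Proposition~\ref{prop:SBC_T2}, which is already available; the real work is entirely deterministic bookkeeping: matching flow time to the return count $N(t)$ via Lemma~\ref{lem:av flow time}, disposing of the incomplete boundary strips, and --- most delicately --- keeping every estimate uniform in $j$ and $t$ so that a single limit $\eps\to0$ can be taken at the end. The one point deserving explicit comment rather than routine verification is that the quasi-H\"older bound on $\overline\psi_{n,B_k^i,0}$ survives the $\eps$-perturbation of the time scale, which we have flagged above.
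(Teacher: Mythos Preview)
Your proof is correct and follows essentially the same route as the paper: decompose the flow time into returns to $B$, sandwich the continuous integral between the $\pm\eps$-perturbed discrete sums $\sum_j\overline\psi_{j,B_k^i,\pm\eps}\circ\T_2^j$, apply Proposition~\ref{prop:SBC_T2}, and reconcile the normalizations via Lemmas~\ref{lem:av flow time} and~\ref{lem:decr seq}. You are slightly more explicit than the paper about needing the quasi-H\"older bound to survive the $\pm\eps$ time-scale perturbation (the paper's sandwich also requires this but leaves it implicit), and your handling of the boundary strips is essentially the paper's observation that at most one passage through $B_k^i$ contributes $\le 1$; otherwise the arguments coincide.
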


\begin{proof}
We use the idea of Step 1 of the proof of \cite[Theorem 2]{GNO}.  
We will show 
$$\lim_{t\to \infty}\frac1{E_t(\psi_{B_k^i})}\int_0^t \mathbbm{1}_{U_s\cap B_k^i}\circ \F_s (x, 0)~ds =1 \quad\text{ for } \mu_2\text{-a.e. } (x, 0)\in B,$$ as then the proof for  $\mu_\F\text{-a.e. } (x, u)\in B^{r_2}$ follows.

We already know from Proposition~\ref{prop:SBC_T2} that for $\mu_2$-a.e.
 $x\in B$, 
\[
\frac{\sum_{j=0}^{n-1}\overline\psi_{j, B_k^i,0}(\T_2^j x)}{E_n(\overline\psi_{B_k^i,0})}\to 1\text{ as } n\to \infty.
\]
where $E_n (\overline\psi_{B_k^i,0}): =\sum_{j=0}^{n-1} \mu_2 (\overline{\psi}_{j,B_k^i, 0})$. Moreover the fact that  $E_n (\overline\psi_{B_k^i,0})\to \infty$ is equivalent to the divergence assumption in the statement of our theorem, as the sets are shrinking.
Lemma~\ref{lem:decr seq} controls  the effect of this perturbation in the limit when we switch on the $\eps$ parameter in one of the occurrences of $\psi_{n, B_k^i,\eps}$ above which deals with the shrinking of the balls during the flow between returns to the base. Note that $\mu_2 (\overline{\psi}_{n,B_k^i, \eps}) \le \mu_2 (\overline{\psi}_{n,B_k^i, -\eps})$ and $\lim_{\eps\to 0}
\frac{\mu_2 (\overline{\psi}_{j,B_k^i, \eps})}{\mu_2 (\overline{\psi}_{j,B_k^i, -\eps})}\to 1$ uniformly in $n$. Thus  
\[
\lim_{\eps\to 0} \frac{E_{q(t,x)}(\overline\psi_{B_k^i,+\eps})}{E_{q(t,x)}(\overline\psi_{B_k^i,-\eps})} \to 1
\]
uniformly in $n$. We will use these observations to squeeze $\frac{\int_0^{r_2^{q(t, x)}(x)} \psi_{B_k^i, s}\circ \F_s (x, 0)~ds}{E_{q(t,x)}(\overline\psi_{B_k^i,0})}$ between two corresponding convergent scaled Birkhoff sums.

Given $x\in B$, define $q(t, x)$ as the integer for which 
$$r_2^{q(n, x)}(x)\le t<r_2^{q(n, x)+1}(x)$$
where $r_2^m (x) = r_2(x) +r_2 (\T_2 x)+\ldots + r_2 (\T_2^{m-1} x)$.  
Observe that since the difference of the integral of $\mathbbm{1}_{U_s\cap B_k^i}\circ \F_s (x, \cdot)$ between times $r_2^{q(t, x)}(x)$ and $t$ is made up by at most one passage through $B_k^i$ which integrates to at most the length of $B_k^i$ in the vertical direction, i.e., 1, we have

\[
\int_0^t \psi_{B_k^i, s}\circ \F_s (x, 0)~ds-\int_0^{r_2^{q(t, x)}} \psi_{B_k^i, s}\circ \F_s (x, 0)~ds \le 1.
\]
Hence this difference is uniformly bounded independently of $x$ and $t$.

Thus $ \sum_{j=0}^{q(t,x)-1}\overline\psi_{j, B_k^i,\eps}(\T_2^j x) \le \int_0^{r_2^{q(t, x)}(x)} \psi_{B_k^i, s}\circ \F_s (x, 0) \le \sum_{j=0}^{q(t,x)-1}\overline\psi_{j, B_k^i,-\eps}(\T_2^j x)+1$.

So by Lemma~\ref{lem:av flow time}, for all small $\eps>0$, 
\begin{align*}
&\left(\frac{\sum_{j=0}^{q(t,x)-1}\overline\psi_{j, B_k^i,\eps}(\T_2^j x)}{E_{q(t,x)}(\overline\psi_{B_k^i,\eps})}\right)\left(\frac{E_{q(t,x)}(\overline\psi_{B_k^i,\eps})}{E_{q(t,x)}(\overline\psi_{B_k^i,0})}\right)\\
&\quad\le \frac{\int_0^{r_2^{q(t, x)}(x)} \psi_{B_k^i, s}\circ \F_s (x, 0)~ds}{E_{q(t,x)}(\overline\psi_{B_k^i,0})}\\
&\quad\le \left(\frac{\sum_{j=0}^{q(t,x)-1}\overline\psi_{j, B_k^i,-\eps}(\T_2^j x)+1}{E_{q(t,x)}(\overline\psi_{B_k^i,-\eps})}\right)\left(\frac{E_{q(t,x)}(\overline\psi_{B_k^i,-\eps})}{E_{q(t,x)}(\overline\psi_{B_k^i,0})}\right)
\end{align*}
  Then Lemmas~\ref{lem:av flow time},~\ref{lem:decr seq} and the fact that
\[
\lim_{\eps\to 0} \frac{E_{q(t,x)}(\overline\psi_{B_k^i,+\eps})}{E_{q(t,x)}(\overline\psi_{B_k^i,-\eps})}  \to 1
\]
 imply that 
\[
\lim_{t\to \infty} \frac{\int_0^{r_2^{q(t, x)}(x)} \psi_{B_k^i, s}\circ \F_s (x, 0)~ds}{E_{q(t, x)}(\overline\psi_{B_k^i,0})} =
 \lim_{t\to \infty} \frac{\int_0^{t} \psi_{B_k^i, s}\circ \F_s (x, 0)~ds}{E_{q(t, x)}(\overline\psi_{B_k^i,0})} =1.
\]

To complete the proof of the proposition, as in Step 2 of the proof of \cite[Theorem 2]{GNO}, we show that 
$$\lim_{n\to\infty}\frac{E_{n}(\psi_{B_k^i})}{E_{\lfloor n/\overline r_2\rfloor}(\overline\psi_{B_k^i,0})}=1.$$
Notice that this is the one part where our proof is easier than theirs since the flow is a first return to the base (this also accounts for the fact that Step 3 of that proof is unnecessary here).

By Lemma~\ref{lem:av flow time}, $q(n, x)\sim \lfloor\frac{n}{\overline{r_2}}\rfloor$.  Hence 
\begin{align*}
E_{\lfloor n/\overline r_2\rfloor}(\overline\psi_{B_k^i,0})&=\sum_{j=0}^{\lfloor\frac{n}{\overline{r_2}}\rfloor-1} \int_B\overline\psi_{j,B_k^i, 0}(y)~d\mu_2(y) \\
&= \sum_{j=0}^{\lfloor\frac{n}{\overline{r_2}}\rfloor-1} \int_B\int_0^{r_2(y)} \psi_{B_k^i, j\overline{r_2}+s}\circ \F_s (y, 0)~ds~d\mu_2(y)\\
&\sim \sum_{j=0}^{\lfloor\frac{n}{\overline{r_2}}\rfloor-1} \mu_\F (U_{j(\overline{r_2})}\cap B_k^i).
\end{align*}
Applying Lemma 6.2 with a speeded up time variable, we obtain  $ \sum_{j=0}^{\lfloor\frac{n}{\overline{r_2}}\rfloor-1} \mu_\F (U_{j(\overline{r_2})}\cap B_k^i) \sim
\int_0^{\frac{n}{\overline{r_2}}} \mu_\F (U_{s\overline{r_2}} \cap B_k^i)\overline{r_2} ~ds$, so a change of variables then gives 
$E_{\lfloor n/\overline r_2\rfloor}(\overline\psi_{B_k^i,0})\sim E_n (\psi_{B_k^i})$, thus completing the proof.\end{proof}

\subsection{An application of Theorem ~\ref{prop:restrict T2 flow}} One of the challenges in proving Borel-Cantelli lemmas when moving from the discrete system to the flow is that the induced characteristic functions are not, in general, characteristic functions.  In this subsection we prove that characteristic functions of balls  in the flow space induce observables which are sufficiently regular that we can apply Theorem~\ref{prop:restrict T2 flow} to them. In fact the averaging in the flow
direction regularizes functions.
If $(z,u)\in  B^{r_2}$ we let $B_{\eta}(z,u)$ denote a ball of radius $\eta$ about $(z,u)$ in the Euclidean metric $d_1((z,u),(z',u'))=[(u-u')^2+\sum_{j=1}^d(z_j-z_j^{'})^2]^{\frac{1}{2}}$. It is clear from our proof below other Euclidean metrics may be used, for example 
$d_2((z,u),(z',u'))=|u-u'|+\sum_{j=1}^d |z_j-z_j^{'}|$.



\begin{theorem}
Let $\delta(s)$ be a  decreasing sequence. For $\mu_\F$-a.e.\ $(z,u)\in B^{r_2}$ setting $U_s=B_{\delta(s)}(z,u)$, if $\lim_{t\to \infty} E_t(U)=\infty$ then
$$\lim_{t\to \infty}\frac1{E_t(U)}\int_0^t \mathbbm{1}_{U_s}\circ \F_s(x, v)~ds =1, \quad\text{ for } \mu_\F\text{-a.e. } (x, v)\in B^{r_2}$$
\label{prop:BC flow eg}
\end{theorem}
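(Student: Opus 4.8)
The plan is to deduce this from Theorem~\ref{prop:restrict T2 flow}, whose substantive content — comparing the continuous-time average with the Birkhoff sum of the induced observable — is already in place; what remains is to verify that the family $U_s=B_{\delta(s)}(z,u)$ falls within its scope. First I would pass from $\H^{r_0}$ to the suspension $B^{r_2}$: for $\mu$-a.e.\ centre $(z,u)$ the forward and backward first-hitting times of its flow orbit to the cross-section $B$ are strictly positive, say both $\ge\eta_0>0$, so by Remark~\ref{rmk:Br2 isom} every ball $B_\eta(z,u)\subset\H^{r_0}$ with $\eta<\eta_0$ is isometric to a ball $B_\eta(x^*,u^*)$ in $B^{r_2}$, where $(x^*,u^*)$ is the image of $(z,u)$ under the (a.e.\ defined, measure-preserving, locally isometric) identification of the two suspension models of the Teichm\"uller flow. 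Assuming $\delta(s)\downarrow 0$ (otherwise the statement reduces to Birkhoff's ergodic theorem), pick $s_0$ with $\delta(s)<\eta_0$ for $s\ge s_0$; since $\int_0^{s_0}\mathbbm{1}_{U_s}\circ\F_s\,ds\le s_0$ and $E_{s_0}(U)<\infty$, it is enough to prove $\frac1{E_t(U)}\int_0^t\mathbbm{1}_{U_s}\circ\F_s(x,u)\,ds\to1$ for $\mu$-a.e.\ $(x,u)\in B^{r_2}$, where now $U_s=B_{\delta(s)}(x^*,u^*)\cap B^{r_2}$ and $\F_s$ is the semiflow on $B^{r_2}$.

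Next I would localise to a single cell. For $\mu$-a.e.\ centre one may also take $\lambda^*$ to satisfy the Keane condition, $(x^*,u^*)$ to lie in the interior of a unique cell $B_{k_0}^{i_0}$, $u^*\notin\mathbb{Z}$, and $u^*<r_2(x^*)$; since $r_2$ is analytic, hence continuous, on the partition element $B_{k_0}$ containing $x^*$, one may enlarge $s_0$ so that for $s\ge s_0$ the ball $B_{\delta(s)}(x^*,u^*)$ lies inside $B_{k_0}^{i_0}$ and meets none of the hypersurfaces $\{t=i_0\}$, $\{t=i_0+1\}$, $\{t=r_2(x)\}$. Then $\mathbbm{1}_{U_s}=\psi_{B_{k_0}^{i_0},s}$ for $s\ge s_0$, and $E_t(\psi_{B_{k_0}^{i_0}})\to\infty$ because $E_{s_0}(U)<\infty$; so the theorem will follow from Theorem~\ref{prop:restrict T2 flow} applied to $B_{k_0}^{i_0}$ once I verify its hypothesis $\sup_n\|\overline\psi_{n,B_{k_0}^{i_0},0}\|_\alpha<\infty$.

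This uniform bound is the point where one genuinely uses that averaging along the one-dimensional flow direction regularises the ball indicator, and it is the step I expect to demand the most care. Fixing $n$, writing $\rho(x)=d(x,x^*)$ for the transverse distance in $B$ and $\delta_n=\delta(n\overline{r_2}+i_0)=\max_{s\in[i_0,i_0+1]}\delta(n\overline{r_2}+s)$, the no-truncation property from the previous paragraph turns \eqref{eq:overline psi} into
\[
\overline\psi_{n,B_{k_0}^{i_0},0}(x)=\Leb\bigl\{\,s\in[i_0,i_0+1]\ :\ \rho(x)^2+(s-u^*)^2<\delta(n\overline{r_2}+s)^2\,\bigr\}=:\Phi_n(\rho(x))
\]
(with $\rho(x)^2+(s-u^*)^2<(\cdot)^2$ replaced by $\rho(x)+|s-u^*|<(\cdot)$, and $\rho$ the transverse $\ell^1$-distance, if the metric $d_2$ is used). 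The profile $\Phi_n:\R_+\to[0,1]$ is non-increasing and vanishes on $[\delta_n,\infty)$, so every super-level set $\{\overline\psi_{n,B_{k_0}^{i_0},0}>\lambda\}$ is a metric ball centred at $x^*$ of radius $\le\delta_n\le\delta(s_0)$ contained in $B_{k_0}$. The layer-cake identity $\overline\psi_{n,B_{k_0}^{i_0},0}=\int_0^1\mathbbm{1}_{\{\overline\psi_{n,B_{k_0}^{i_0},0}>\lambda\}}\,d\lambda$ together with subadditivity of $|\cdot|_\alpha$ then gives
\[
\bigl|\overline\psi_{n,B_{k_0}^{i_0},0}\bigr|_\alpha\le\sup_{0<r\le\delta(s_0)}\bigl|\mathbbm{1}_{B_r(x^*)}\bigr|_\alpha\le C(\dim B)\bigl(\delta(s_0)^{\dim B-1}\epsilon_0^{1-\alpha}+\epsilon_0^{\dim B-\alpha}\bigr),
\]
the last inequality being the standard estimate for the $\epsilon$-collar of a sphere (using $\dim B\ge1$ and $0<\alpha\le1$); with $\|\overline\psi_{n,B_{k_0}^{i_0},0}\|_{L^1_m}\le\Leb(B_{\delta_n}(x^*))\le C\delta(s_0)^{\dim B+1}$ this yields the required bound, uniformly in $n$. (With $d_2$ the function $\overline\psi_{n,B_{k_0}^{i_0},0}$ is moreover $2$-Lipschitz, which makes the estimate even more transparent.)

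Finally, Theorem~\ref{prop:restrict T2 flow} gives $\frac1{E_t(\psi_{B_{k_0}^{i_0}})}\int_0^t\psi_{B_{k_0}^{i_0},s}\circ\F_s(x,u)\,ds\to1$ for $\mu$-a.e.\ $(x,u)\in B^{r_2}$, and combining this with $\int_0^t\mathbbm{1}_{U_s}\circ\F_s=\int_0^t\psi_{B_{k_0}^{i_0},s}\circ\F_s+O(1)$, with $E_t(\psi_{B_{k_0}^{i_0}})/E_t(U)\to1$, and with the isometry of Remark~\ref{rmk:Br2 isom} to return to $\H^{r_0}$, produces $\frac1{E_t(U)}\int_0^t\mathbbm{1}_{U_s}\circ(\T_0)^{r_0}_s(x,u)\,ds\to1$ for $\mu$-a.e.\ $(x,u)$, which is the claim. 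The hard part is really the regularity estimate of the previous paragraph; the rest is bookkeeping, the most delicate item being the verification — valid for $\mu$-a.e.\ centre — that the shrinking balls eventually sit inside a single analyticity cell, so that neither the cell walls nor the graph of $r_2$ inject any extra oscillation into the induced observables.
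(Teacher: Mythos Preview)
Your proposal is correct and follows the same overall architecture as the paper — reduce to the suspension $B^{r_2}$, localise to a single cell $B_{k^*}^{i^*}$, and then verify the quasi-H\"older hypothesis of Theorem~\ref{prop:restrict T2 flow} for the induced observables $\overline\psi_{n,B_{k^*}^{i^*},0}$ — but the way you carry out the key regularity estimate is genuinely different.

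The paper bounds $\epsilon^{-\alpha}\int_B\mbox{osc}(\psi_n,B_\epsilon(x))\,dx$ by a direct geometric case analysis: when $B_\epsilon(x)$ sits well inside the projected disc $B_{\delta(s)}(z)$, the sphere's boundary is written as a graph $s-u=\pm\sqrt{\delta(s)^2-\sum_j(t_j-z_j)^2}$ over the base, and the partial derivatives $|\partial s/\partial t_i|\le C/\sqrt{\epsilon}$ give $\mbox{osc}(\psi_n,B_\epsilon(x))=O(\sqrt{\epsilon})$; the $2\epsilon$-collar of the rim contributes $O(1)$ on a set of measure $O(\epsilon)$. This forces the choice $\alpha=\tfrac12$.

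You instead observe that $\overline\psi_{n,B_{k^*}^{i^*},0}(x)=\Phi_n(d(x,x^*))$ with $\Phi_n$ non-increasing, so every super-level set is a transverse ball of radius $\le\delta_n$; the layer-cake identity then gives $|\overline\psi_n|_\alpha\le\int_0^1|\mathbbm{1}_{\{\overline\psi_n>\lambda\}}|_\alpha\,d\lambda\le\sup_{0<r\le\delta(s_0)}|\mathbbm{1}_{B_r(x^*)}|_\alpha$, and the right-hand side is uniformly bounded by the standard collar estimate for indicator functions of balls. This is cleaner: it avoids the explicit gradient computation, works for any $\alpha\in(0,1]$ rather than only $\alpha=\tfrac12$, and sidesteps the slight imprecision in the paper's proof regarding the $s$-dependence of the radius inside the height function. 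The paper's argument, on the other hand, is more hands-on and makes the geometric mechanism — curvature of the sphere near its equator — visible. Both arrive at the same uniform bound, so the deductions from Theorem~\ref{prop:restrict T2 flow} are identical.
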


\begin{proof}  
As before we define 
\[
\psi_{B_k^i, s} := \mathbbm{1}_{U_{\delta(s)}\cap B_k^i},
\]
where 
\[
B_k^i:=\Big\{(x, t)\in B_k\times \R_+:i\le t<\min\{i+1, r_2(x)\}\Big\}.
\]
For large $s$ the ball  $B_{\delta(s)}(z,u)$ lies inside a fixed $B_{k^*}^{i^*}$ for some specific $k^*$, $i^*$. Since we have freedom to induce on a set $B$
placed anywhere in $\Delta$ we need not worry about $(z,u)$ lying on the boundary of a $B_k^{i}$. 

For $\gamma >0$ we also define  the induced function 
\[
\psi_n:=\psi_{n, B_{k^*}^{i^*}, \gamma}(x):=\int_0^{r_2(x)}\left(\mathbbm{1}_{U_{n(\overline{r_2}+\gamma)+s}}\cdot \mathbbm{1}_{B_{k^*}^{^*}i}\right)\circ \F (x, s)~ds,
\]

We have to show that there exists an $\alpha$ and a constant $K$ such that $\| \psi_n\|_{\alpha}<K$ for all $n$.

It suffices to show that there exist $\alpha$, $K$ such that
\[
\eps^{-\alpha}\int_Bosc(\psi_n, B_\eps (x))~dx< K
\]
for all $n$.

If $\delta(n(\overline{r_2})) \le \eps$ then  $osc(\psi_n, B_\eps(x)) \le 2 \eps$. This is because for each $y \in B_\eps(x)$, 
\[
\int_0^{r_2(x)}\left(\mathbbm{1}_{U_{(n(\overline{r_2}+\gamma)+s)}}\cdot \mathbbm{1}_{B_k^i}\right)\circ \F (x, s)~ds \le \delta( |n(\overline{r_2})|)
\le \eps.
\]

So we need only consider the supremum over small  $\eps < \delta(n(\overline{r_2}))$.
The ball $B_{\delta(s)}(z,u) \subset B_{k^*}^{i^*} $ lies in a $d+1$-dimensional Euclidean space. 
Its projection onto the $d$-dimensional space  $B$ is a ball  $B_{\delta(s)} (z)$ in $B_{k^*}$.
If the distance of $B_{\eps} (x)$ to $ B_{\delta(s)}(z))$ is greater than $2\eps$ then 
either $B_{\eps}(x) $ is in the exterior of $ B_{\delta(s)}(z)$  or $B_{2\eps} (x) \subset  B_{\delta(s)}(z)$. In the first case 
$\int_Bosc(\psi_n, B_\eps (x))=0$ as the flow starting in $B_{\eps}(x) $ does not meet  $B_{\delta(s)}(z,u)$. In the second case 
i.e.  $B_{\eps} (x)$ is bounded  away from the boundary of $B_{\delta(s)}(z))$ by $\eps$,
 then the two parts of the  boundary of $B_{\delta(s)}(z,u)$ which project to 
$B_{\eps} (x)$ may be written locally  as graphs over $B_{\eps} (x)$, the `height'  functions are given by 
$s-u=\sqrt{\delta (s)-\sum_{j=1}^d (t_j-z_j)^2}$  and $s-u=-\sqrt{\delta (s)-\sum_{j=1}^d (t_j-z_j)^2}$ respectively,
where $t=(t_1,\ldots,t_d)$ and $z=(z_1,\ldots,z_d)$ are Euclidean co-ordinates in $B$. Here we are restricting to $t$ satisfying
$\sqrt{ \sum_{j=1}^n (t_j-x_j)^2} <\eps$ where $x=(x_1,\ldots, x_d)$ is the center of $B_{\eps} (x)$.
Note that for both branches $|\frac{\partial s}{\partial t_i}|=\frac{1}{2} (\delta(s)-\sum_{j=1}^d (t_j-z_j)^2)^{-\frac{1}{2}}(2|t_i -z_i |)$.
In particular since $t$ satisfying  $\sqrt{ \sum_{j=1}^n (t_j-x_j)^2} <\eps$ is bounded
from the boundary of $B_{\delta(s)} (z)$ by $\eps$, i.e. $\sqrt{(\delta(s)-\sum_{j=1}^d (t_j-z_j)^2)}>\eps$
we have $|\frac{\partial s}{\partial t_i}| \le \frac{C}{\sqrt{\eps}}$ for all $i$ and hence
the oscillation  of  $\psi_n$ over $ B_\eps (x)$ is $O(\sqrt{\eps})$. Finally if  $ B_{\eps} (x)$ is within
$2\eps$ of  the boundary of $B_{\delta(s)} (z)$ then the oscillation  of  $\psi_n$ over $ B_{\eps} (x)$ is $O(1)$ but the 
$\mu_2$ measure of points $x$ within   a $2\eps$ neighborhood of the   boundary of $B_{\delta(s)} (z)$ is $O(\eps)$.  

Thus taking $\alpha=\frac{1}{2}$  there exists $K$ such that
\[
\eps^{-\frac{1}{2}}\int_Bosc(\psi_n, B_\eps (x))~dx< K
\]
for all $n$.
\end{proof}

\subsection{Borel-Cantelli lemmas for the Teichm\"{u}ller flow}

In this section, we prove Borel-Cantelli lemmas for the Teich\"{u}ller flow $\T$ seen as a suspension flow over the map $\mathcal{R}_2 : B_{\H_{(1)}} \to B_{\H_{(1)}}$ with roof function $r_2$.

We first prove a similar result for the map $\mathcal{R}_2$. Recall that this map preserves the measure $m_{B_{\H_{(1)}}}$ and is a skew-product over the map $\T_2 : B \to B$, which preserves $\mu_2$. To simplify the notations, we set $\mu := \mu_2$ and $\hat{\mu} := m_{B_{\H_{(1)}}}$.

\begin{proposition} \label{prop:bc_invert}

Let $(U_n)$ be a decreasing sequence of nested balls centered at a point $(x, \tau) \in B_{\H_{(1)}}$, with $\sum_n \hat{\mu}(U_n) = \infty$. Assume there exist $C >0$ and $\gamma > 0$ such that $\hat{\mu}(U_n) \ge C n^{- \gamma}$ and $(\log n) \mu(U_n) \le C$ for all $n \ge 0$. Then the sequence $(U_n)$ is strong Borel-Cantelli for $\mathcal{R}_2$.

\end{proposition}

\begin{proof}
We follow the proof of \cite[Theorem 1.5]{Zhang}. Let $f_k = \mathbbm{1}_{U_k} \circ \mathcal{R}_2^k$. We denote by $E(.)$ the expectation operator with respect to $\hat{\mu}$. We trivialize $B_{\H_{(1)}}$ to a product via the natural diffeomorphism $B_{\H_{(1)}} \to B \times \mathbb{P} T_B^+$, where $\mathbb{P} T_B^+$ is the image of $T_B^+$ in the projective space $\mathbb{P} \R^{\A}$. Let $\Pi_x$ and $\Pi_\tau$ be the projections on the factors $B$ and $\mathbb{P} T_B^+$ respectively. We denote by $m_1$ the Lebesgue measure on each factor, and by $m_2$ the product Lebesgue measure on $ B \times \mathbb{P} T_B^+$. The measure $\hat{\mu}$ has a smooth density with respect to $m_2$, which is bounded uniformly from above and below. Let $E(.)$ be the expectation operator with respect to the measure $\hat{\mu}$.

For $i < j$, we calculate 
\[
\begin{aligned}
E(f_i f_j) & = \int \mathbbm{1}_{U_i} \circ \mathcal{R}_2^i \, \mathbbm{1}_{U_j} \circ \mathcal{R}_2^j \, d \hat{\mu} = \int \mathbbm{1}_{U_i} \, \mathbbm{1}_{U_j} \circ \mathcal{R}_2^{j-i} \, d \hat{\mu} \\
& \lesssim \int_{U_i} \mathbbm{1}_{\Pi_x U_i} \, \mathbbm{1}_{\Pi_x U_j} \circ \Pi_x \circ \mathcal{R}_2^{j-i} \, d m_2 \\
& \lesssim m_1(\Pi_\tau U_i) m_1( \Pi_x U_i \cap \T_2^{-(j-i)} \Pi_x U_j) \\
& \lesssim m_1(\Pi_\tau U_i) \mu(\Pi_x U_i \cap \T_2^{-(j-i)} \Pi_x U_j) \\
& \lesssim m_1(\Pi_\tau U_i) \left( \mu(\Pi_x U_i) \mu(\Pi_x U_j) + C \theta^{j-i} \mu(\Pi_x U_j) \right) \\
& \lesssim  m_1(\Pi_\tau U_i) \left( m_1(\Pi_x U_i) m_1(\Pi_x U_j) + C \theta^{j-i} m_1(\Pi_x U_j) \right) \\
& \lesssim (m_2(U_i))^{\frac 1 2} \left((m_2(U_i))^{\frac 1 2} (m_2(U_j))^{\frac 1 2} + C \theta^{j-i} (m_2(U_j))^{\frac 1 2} \right) \\
& \lesssim (m_2(U_i))^{\frac 3 2} + \theta^{j-i} m_2(U_i).
\end{aligned}
\]

Throughout this calculation, we have used the fact that $\mu$ and $\hat{\mu}$ have a density with respect to $m_1$ and $m_2$ respectively which are bounded uniformly from above and below, decay of correlations for $\T_2$ given by Proposition \ref{prop:decay_gibbs} and the fact that there exists a constant $K$ such that for all ball $U$, $m_1(U) \le K (m_2(U))^{\frac 1 2}$.

So, using decay of correlations for $\mathcal{R}_2$ and Lipschitz observables, we have 
\begin{eqnarray*}
\lefteqn{\sum_{j=i+1}^n (E(f_i f_j)-E(f_i)E(f_j)) \le ( \sum_{j=i+1}^{i+a\log{i}} + \sum_{j>i + a\log{i}}) [E(f_i f_j)-E(f_i)E(f_j)] }\\
&\lesssim  (\log i) (m_2(U_i))^{\frac{3}{2}}  +  m_2(U_i) +  \sum_{j>i + a\log{i}}   \alpha^{j-i} ||\tilde{f}_i||_{\rm Lip} ||\tilde{f}_j||_{\rm Lip} \\
\end{eqnarray*}
where $a$ will be chosen later and  $\tilde{f}_i$ is a  Lipschitz approximation  to ${f}_i$, satisfying $m_2(|\tilde{f}_i-f_i|) \lesssim \frac{1}{i^2}$ 
and $\|\tilde{f}_i\|_{\rm Lip} \lesssim i^{\kappa}$ for some fixed $\kappa$. We are able to satisfy both conditions as
$m_2(U_i) \gtrsim i^{-\gamma}$ for some $\gamma>0$. We have $(\log i) (m_2(U_i))^{\frac{3}{2}}\lesssim m_2 (U_i)$ and for $a>0$ sufficiently large
$$\sum_{j>i + a\log{i}} \alpha^{j-i} ||\tilde{f}_i||_{\rm Lip} ||\tilde{f}_j||_{\rm Lip} \lesssim m_2 (U_i).$$

We have thus shown that \[
\sum_{i=m}^n \sum_{j=i+1}^n (E(f_i f_j)-E(f_i)E(f_j)) \lesssim  \sum_{i=m}^n E(f_i)
\]
which implies the strong Borel-Cantelli property by Proposition \ref{prop:sprindzuk}.
\end{proof}

\begin{remark}
Note that the proof above does not use the assumption that the balls are nested, nor that they are balls just that they may be approximated by Lipschitz functions $\tilde{f}_i$ such that
$m_2(|\tilde{f}_i-f_i|) \lesssim \frac{1}{i^2}$ 
and $\|\tilde{f}_i\|_{Lip} \lesssim i^{\kappa}$ for some fixed $\kappa$. 

\end{remark} 

We now show that the (SBC) property for  the map $\mathcal{R}_2$ implies the SBC property  for nested balls $U_t$   in the full suspension flow. 

\begin{theorem}
Let $U = (U_t)_{t \ge 0}$ be a family of shrinking balls in $B^{r_2}_{\H_{(1)}}$, with $\mu_\T(U_t) \gtrsim t^{- \gamma}$ for some $\gamma > 0$ and $\sup_{t \ge 0} (\log t) \mu_\T(U_t) < \infty$.
Assume that 
\[
E_t:= E_t(U) = \int_{0}^t \mu_{\T} (U_s)ds
\]
diverges. 

Then the family $U$ is strong Borel-Cantelli for the flow: for $\mu_\T$ a.e. $p \in B_{\H_{(1)}}^{r_2}$, $$\frac{1}{E_t(U)} \int_{0}^{t} \mathbbm{1}_{U_s}(\T^t(p)) \, ds \to 1.$$
\end{theorem}

\begin{proof}
Note that the measure on the flow $\mu_{\T}$  is the product of the base measure and 
Lebesgue measure in  the flow direction, so that $d\mu_\T =d\hat{\mu} \times dt$ and that the projection $\Pi$, say,  via flow lines of the balls $U_t$ in the suspension flow is a $t$-parametrized sequence of nested `balls'  $C_t$ in the Poincar\'e section $B_{\H_{(1)}}$. The dynamics 
of  the return map to $B_{\H_{(1)}}$ is given by  the skew-product  map $\mathcal{R}_2: B_{\H_{(1)}} \to B_{\H_{(1)}}$.  The flow  $(\T^t)$ is rectifiable in a sufficiently small neighborhood of the balls $U_t$. 
Let $\hat{k} (p)$ be the time that $\T^t(p)$ returns to $B_{\H_{(1)}}$ for the $k$-th time under $\T$, where $p\in B_{\H_{(1)}}$  or $\hat{\mu} $ a.e. $p \in B_{\H_{(1)}}$,
\[
\lim_{k\to \infty} \frac{\hat{k} (p) }{k}   =\int_{B_{\H_{(1)}}}  r_2 \,d \hat{\mu}:=\bar{r}_2
\]

 We fix an integer $n$ and  discretize $C_t$ into disjoint sets $C_{t,j}$, $j=1$ to $n$, of roughly equal  $\hat{\mu}$ measure and define  $\tilde{U}_{t,j}:=\{ q \in U_t: \Pi q \in C_{t,j} \}$. Hence 
 $C_{t,j}$ lie in $B_{\H_{(1)}}$ while $\tilde{U}_{t,j}$ lies in the full suspension flow $B_{\H_{(1)}}^{r_2}$.

We consider two sequences of sets $C_{\alpha,t,j}$  and 
$C_{\beta, t , j}$ in the suspension flow defined by  flow lines through  $C_{t,j}$  of constant length $\tau_1 (t,j)$ and $\tau_2 (t,j)$ such that for each $\tilde{U}_{t,j}$,
$ C_{\alpha,t,j} \subset \tilde{U}_{t,j} \subset  C_{\beta,t,j} $ and moreover for each $j,t>0$,  $\mu_\T  (C_{\beta,t,j} ) - \mu_\T  (C_{\alpha,t,j} ) \le e(n) \mu_\T (\tilde{U}_{t,j})$ where $e(n)\to 0$ 
as $n\to \infty$.  We can ensure this as the boundary of $\tilde{U}_{t,j}$ consists of two manifolds, each a smooth graph over $C_{t,j}$. The role of the sequence of sets $C_{\alpha,t,j}$, $C_{\beta,j,t}$
is to provide  discretized lower  and upper bounds between which we can squeeze the continuous  flow.

Hence $\mu_\T   (\cup_{j} C_{\alpha,t,j} ) \le \mu_\T (U_t) \le \mu_\T  (\cup_{j} C_{\beta,t,j} )$ and 
 $\mu_\T  (\cup_{j} C_{\beta,t,j} ) -\mu_\T  (\cup_{j} C_{\alpha,t,j} ) \le e(n) \mu_\T (U_t)$ where $e(n)\to 0$ 
as $n\to \infty$. 

Recall  $\hat{k}(p)$ denotes the k-th return time to $B_{\H_{(1)}}$ of a point $p\in B_{\H_{(1)}}$ under the flow $\T^t$ so that $\T^{\hat{k}} (p)=\mathcal{R}_2^k (p)$. By the ergodic theorem given
$\eps >0$ for $\hat{\mu}$ a.e. $p$ there  exists $k^* (\eps)(p) $ such that $k(\bar{r}_2 -\eps)\le \hat{k} (p) \le k (\bar{r}_2 +\eps)$ for all $k>k^*(\eps)$.

 We fix $\eps$ and  $n$. We let $[\alpha]$ denote the integer part of the real number $\alpha$. For each $j$, the sequences of sets, indexed by $k$,   $(C_{[k(\bar{r}_2 +\eps)],j})$ and  $(C_{[k(\bar{r}_2 -\eps)],j})$ both have the (SBC) property for $\mathcal{R}_2 : B_{\H_{(1)}} \to B_{\H_{(1)}}$, i.e.
 \[
 \lim_{k\to \infty} \frac{1}{E_{(k,j,\eps,+)}} \sum_{i=1}^k \mathbbm{1}_{ C_{([i(\bar{r}_2 +\eps)],j)}} \circ \mathcal{R}_2^i (p)=1
 \]
for $\hat{\mu}$ a.e. $p \in B_{\H_{(1)}}$, where $E_{(k,j,\eps,+)}:=\sum_{i=1}^k \hat{\mu}(C_{[i(\bar{r}_2+\eps)],j})$ and similarly for $(C_{([k(\bar{r}_2 -\eps)],j)})$. Indeed, this follows from Proposition \ref{prop:bc_invert} since $\hat{\mu}(C_{[k(\bar{r}_2 + \eps)],j}) \sim \mu_\T(U_{[k(\bar{r}_2 + \eps)]})^{1-\frac{1}{d}}$ as $k \to \infty$, for fixed $n$ and $\eps$.
 
 Note that $k(\bar{r}_2 -\eps)\le \hat{k} \le k (\bar{r}_2 +\eps)$ and by the Lipschitz regularity  of $\hat{\mu} (C_{t,j})$ in $t$ if $k(\bar{r}_2 -\eps)\le t \le k (\bar{r}_2 +\eps)$ 
 then $\hat{\mu}( C_{([k(\bar{r}_2 +\eps)],j)} )-  \hat{\mu}(C_{([k(\bar{r}_2 -\eps)],j)} ) \le \rho (\eps) \hat{\mu} (C_{([k(\bar{r}_2 +\eps)],j)})$ where $\rho (\eps)\to 0$ as $\eps\to 0$. 
 
 Furthermore, for sufficiently large $t$,  once $\mathcal{R}_2^k (p)$ enters $C_{t,j}$ its trajectory spends a length of flow time between  $\tau_1( [k(\bar{r}_2-\eps)],j)$ and  $\tau_2( [k(\bar{r}_2+\eps)],j)$ in the sets $(\tilde{U}_{t,j})$.

 Thus for $\hat{\mu}$ a.e. $p$, (recall $n$ is fixed)
 \[
\begin{aligned}
 \sum_{j=1}^{n}\sum_{i=1}^T \tau_1( [i(\bar{r}_2-\eps)],j)) \hat{\mu} ( C_{([i(\bar{r}_2 -\eps)],j)} ) & \le\sum_{j=1}^n \int_{0}^{T \bar{r}_2}  \hat{\mu}(\Pi \tilde{U}_{t,j}) 1_{U_{t,j}} \circ \T^t(p) dt  \\ &\le  \sum_{j=1}^n\sum_{i=1}^T \tau_1( [i(\tau_1+\eps),j)) \hat{\mu} ( C_{([i(\tau_1 +\eps],j)} )
\end{aligned}
 \]
  
  The sums  $L(T,n):= \sum_{j=1}^{n}\sum_{i=1}^T \tau_1( [i(\bar{r}_2-\eps)],j)) \hat{\mu} ( C_{([i(\bar{r}_2 -\eps)],j)} ) $ and   $U(T,n):=\sum_{j=1}^n\sum_{i=1}^T \tau_1( [i(\tau_1+\eps),j)) \hat{\mu} ( C_{([i(\tau_1 +\eps],j)} )$ are Riemann sums, and $\lim_{T \to \infty} \frac{U(T,n)}{L(T,n)} =\kappa (n)$ where $\kappa (n)\to 1$ as $n\to \infty$.

  Using a change of variables $$\sum_{j=1}^n \int_{0}^{T \bar{r}_2}  \hat{\mu}(\Pi \tilde{U}_{t,j}) 1_{U_{t,j}} \circ \T^t (p) dt \sim \frac{1}{\bar{r}_2} \sum_{j=1}^n \int_{0}^{T}  \hat{\mu}(\Pi \tilde{U}_{t,j}) 1_{U_{t,j}} \circ \T^t (p) dt$$ where $H(T)\sim G(T)$ means $\lim_{T\to \infty} \frac{G(T)}{H(T)}=1$.

  Furthermore 
  $$\left|\frac{\frac{1}{\tau_1} \sum_{j=1}^n \int_{0}^{T} \hat{\mu}(\Pi \tilde{U}_{t,j}) 1_{U_{t,j}} \circ \T^t(p) dt}{\int_0^T \nu (U_t) dt}-1 \right|\le \kappa_2(n)$$ where $\kappa_2 (n)\to 0$ as $n\to\infty$.
  


This proves the SBC property for nested balls in the full suspension flow.
\end{proof}

\subsection{Extreme Value Laws for the flow}
We have established EVLs for sufficient regular observations on the dynamical system $(\T_2,B,\mu_2)$. We now consider 
EVLs for  the flow $\F_s: B^{r_2} \to B^{r_2}$. To do this we use~\cite[Theorem 2.6]{HNT} which relates Extreme Value Theory for
functions on the suspension of a base transformation to the Extreme Value statistics of   observations on the base.

We start with some preliminary notation.  Let $\overline{r_2}=\int_B r_2 (x) d\mu_2$. Let $\phi: B^{r_2} \to \R \cup \{+\infty\}$ be a function, 
strictly maximized at a point $(x_0,u_0)\in B^{r_2}$, 
which is sufficiently regular that for large $r$ the set $\{ (x,u) \in B^{r_2}: \phi ((x,u) )>r\}$ corresponds to a topological ball centered at $(x_0,u_0)$. 
Let $\bar{\phi}(x)=\sup_{0\le u\le r_2 (x)} \phi ((x,u) )$ and define $u_n (t)$ by the requirement that $n\mu_2 \{\bar{\phi} >  u_n (t) \}\to t$.
Let $M_T (x,s):= \max \{ \phi (F_s (x,u): 0\le s\le T\}$.  As a consequence of \cite[Theorem 2.6]{HNT},

\begin{proposition}
Suppose when we write $u_n(t)=\frac{g(t)}{a_n}+b_n$  the normalizing constants $a_n>0$ and $b_n$ satisfy:
  \begin{align}
    \lim_{\eps\to 0} \limsup_{n\to\infty} \;
    a_{n}|b_{[n+\eps n]}-b_n| &= 0, \label{const_1} \\
    \lim_{\eps\to 0} \limsup_{n\to\infty}\left|1-\frac{a_{[n+\eps
          n]}}{a_n}\right| &=0.\label{const_2}
  \end{align}
 
Then,

1) If $x_0$ is not a periodic point for $\T_2$ then $\mu \{ M_T \le u_{[T/\overline{r_2}]} (t) \}\to e^{-t}$.

(2) If $x_0$ is a  repelling periodic point of prime period $k$ then $\mu \{ M_T \le u_{[T/\overline{r_2}]} (t) \}\to e^{-\theta t}$ where $\theta=1-|Jac(D\T_2^{-k}) (p_0)|$.

\end{proposition}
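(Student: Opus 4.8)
The plan is to read the statement off \cite[Theorem 2.6]{HNT}, the general principle that an Extreme Value Law for a base map lifts to its suspension semiflow, combined with the EVL for $\T_2$ recorded above (itself a consequence of \cite[Proposition 3.3]{AytFreVai13}). First I would pass from $\phi$ on $B^{r_2}$ to its roof maximum $\bar\phi(x)=\sup_{0\le u\le r_2(x)}\phi((x,u))$ on $B$, as already introduced. The content of \cite[Theorem 2.6]{HNT} is that, because the semiflow is a suspension over $\T_2$ with integrable roof (recall $\overline{r_2}=\int r_2\,d\mu_2=(\mu_2\times m)(B^{r_2})<\infty$, using the exponential tails of $n_2$), the maximum $M_T$ of $\phi$ along a flow trajectory of length $T$ differs from the maximum of $\bar\phi$ along the discrete $\T_2$-orbit of length $N(T,x):=\#\{s\in[0,T):\F_s(x,u)\in B\}$ by at most the two partial fibres at the ends of $[0,T]$; each contributes at most one near-extremal value and is negligible in the limit. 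By Lemma~\ref{lem:av flow time} and ergodicity, $N(T,x)\sim T/\overline{r_2}$ for $\mu$-a.e.\ $(x,u)$, and hypotheses \eqref{const_1}--\eqref{const_2} on $a_n,b_n$ are exactly what \cite[Theorem 2.6]{HNT} needs in order to absorb the fluctuation of $N(T,x)$ around $[T/\overline{r_2}]$: they guarantee that replacing the random number of returns by $[T/\overline{r_2}]$ in the normalising sequence does not perturb the limit. Hence the flow EVL with scaling $u_{[T/\overline{r_2}]}$ follows from the EVL for $\bar\phi$ on $(\T_2,B,\mu_2)$ with scaling $u_n$, with the \emph{same} limiting distribution, in particular the same extremal index.

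It then remains to apply the EVL for $\T_2$ above to the observable $\bar\phi$, for which I would check that $\bar\phi$ inherits the regularity hypotheses placed on $\phi$. Using the freedom in the choice of the inducing set $B$ (Remark~\ref{rmk:B placement}, as in the proof of Theorem~\ref{prop:BC flow eg}), we may assume $(x_0,u_0)$ lies in the interior of a strip $\{(x,t):x\in B,\ 0\le t\le r_2(x)\}$, i.e.\ $0<u_0<r_2(x_0)$, so that for large $r$ the superlevel set $\{\bar\phi>r\}$ is simply the projection to $B$ of the topological ball $\{\phi>r\}\subset B^{r_2}$, hence a topological ball about $x_0$, and $\bar\phi$ is strictly maximised precisely at $x_0$. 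The normalisation $n\mu_2\{\bar\phi>u_n(t)\}\to t$ then holds by construction of $u_n(t)$. Thus the EVL for $\T_2$ gives $\mu_2\{\max_{0\le j<n}\bar\phi\circ\T_2^j\le u_n(t)\}\to e^{-t}$ when $x_0$ is not a $\T_2$-periodic point, and $\to e^{-\theta t}$ with $\theta=1-|Jac(D\T_2^{-k})(p_0)|$ when $x_0$ is a repelling periodic point of prime period $k$ for $\T_2$.

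Finally I would match the two dichotomies: the relevant notion of periodicity for the flow EVL is precisely $\T_2$-periodicity of $x_0$ (a point of $B^{r_2}$ is flow-periodic exactly when its base point is $\T_2$-periodic), and if $x_0$ has prime period $k$ for $\T_2$ the clustering of extremes along the flow is governed by the recurrence of the $\T_2$-orbit to a neighbourhood of $x_0$, so the extremal index produced by \cite[Theorem 2.6]{HNT} is $1-|Jac(D\T_2^{-k})(p_0)|$. Combining the three steps gives $\mu\{M_T\le u_{[T/\overline{r_2}]}(t)\}\to e^{-t}$ in case (1) and $\to e^{-\theta t}$ in case (2).

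I expect the only genuine difficulty to be the continuous-to-discrete time reduction of the first paragraph --- the boundary excursions and, above all, the fluctuation of the return count $N(T,x)$ --- which is precisely why the technical conditions \eqref{const_1}--\eqref{const_2} on the normalising constants are imposed, and which is handled once and for all by \cite[Theorem 2.6]{HNT}. On our side, the only things to supply are the finiteness of $\overline{r_2}$ (available from the exponential return-time tails of $n_2$) and the verification in the second paragraph that $\bar\phi$ has superlevel sets which are balls around $x_0$, for which the freedom in placing $B$ is used.
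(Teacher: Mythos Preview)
Your proposal is correct and coincides with the paper's approach: the paper states the proposition simply ``as a consequence of \cite[Theorem 2.6]{HNT}'' and gives no further argument, so the content of the proof is exactly the reduction you outline---pass from $\phi$ to $\bar\phi$, invoke the base EVL for $(\T_2,B,\mu_2)$ established earlier via \cite[Proposition 3.3]{AytFreVai13}, and lift to the suspension using the cited theorem from \cite{HNT}, with \eqref{const_1}--\eqref{const_2} absorbing the fluctuation of the return count. Your explicit verification that $\bar\phi$ has ball-shaped superlevel sets (using the freedom in placing $B$) is a detail the paper leaves implicit but which is indeed needed.
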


The extreme value result for the Teichm\"{u}ller flow $\T = (\T^t)_{t \in \R}$ holds from combining ~\cite[Theorem 2.1]{Gupta} with~\cite[Corollary 2.3]{HNT} (note that the proof for Gibbs Markov maps holds in any dimension as long as con formality holds)
and~\cite[Theorem 2.6]{HNT}.

\section{Appendix: Aperiodicity and weak mixing}

Let $(X, T, \mu)$ be an ergodic measure-preserving dynamical system. 

\begin{definition} 
$(X, T, \mu)$ is \emph{weakly mixing} if $f \circ T = e^{it} f$ for some non-zero $f \in L^2(\mu)$ and $t \in [0, 2 \pi )$ implies that $t = 0$ and $f$ is constant.
\end{definition}

\begin{remark} This definition is equivalent to the classical one, stating that $$\frac{1}{n} \sum_{k=0}^{n-1} \left| \mu(T^{-k}(A) \cap B) - \mu(A) \mu(B) \right| \to 0$$ for any measurable sets $A$ and $B$. See \cite[Theorem 1.26]{Wal} in the case where $(X, T, \mu)$ is invertible, and \cite[Theorem 664]{KalMcCut} or \cite[Theorem 2.36]{EinWa} for a proof of the equivalence valid in any case.

 \end{remark}

Let $Y \subset X$ be a subset of positive $\mu$-measure. We denote by $\tau(y)$ the first return time of $y \in Y$ to $Y$: $$\tau(y) = \min \{ n \ge 1 \, :\, T^n y \in Y \}.$$
We then define the first return map $\hat{T} : Y \to Y$ by $\hat{T} = T^{\tau}$. It preserves the normalisation $\mu_Y$ of the restriction to $Y$ of the measure $\mu$ and is ergodic with respect to it.

\begin{definition} 
We will say that the first return time is \emph{aperiodic} if $f \circ \hat{T} = e^{i t \tau} f$ for some non-zero $f \in L^2(\mu_Y)$ and $t \in [0,2\pi)$ implies that $t = 0$ and $f$ is constant.

\end{definition}

\begin{remark} By \cite[Proposition 1.1]{Mor}, the relation $f \circ \hat{T} = e^{i t \tau} f$ is equivalent to $\mathcal{L}(e^{i t \tau} f) = f$, where $\mathcal{L}$ is the transfer operator of $\hat{T}$ with respect to the measure $\mu_Y$.
\end{remark}

\begin{proposition} \label{prop:aperiodicity}
The first return time is aperiodic if and only if $(X,T,\mu)$ is weakly mixing.
\end{proposition}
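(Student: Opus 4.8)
The plan is to exploit the standard correspondence between eigenvalues of $T$ on $X$ and twisted eigenfunctions of the induced map $\hat{T}$ on $Y$. In one direction this is immediate: if $F \in L^2(\mu)$ is non-zero with $F \circ T = e^{it} F$, then since $\hat{T} y = T^{\tau(y)} y$ the restriction $f := F|_Y \in L^2(\mu_Y)$ satisfies $f \circ \hat{T} = e^{it\tau} f$, and $f \neq 0$ because $|F|$ is $T$-invariant, hence $\mu$-a.e. equal to a positive constant by ergodicity of $T$, so $|f|$ is that same constant on $Y$. In the other direction, given $f$ on $Y$ with $f \circ \hat{T} = e^{it\tau} f$, I will manufacture an eigenfunction of $T$ on $X$ by \emph{flowing forward to $Y$}: for $x \in X$ put $n(x) := \min\{k \ge 0 : T^k x \in Y\}$, which is finite $\mu$-a.e. (Birkhoff's theorem applied to $\mathbbm{1}_Y$, since $\mu(Y) > 0$ and $T$ is ergodic), and set $F(x) := e^{-it\, n(x)} f(T^{n(x)} x)$. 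The advantage of this definition over transporting $f$ up a Kakutani tower over $(Y,\hat{T})$ is that it is automatically consistent even when $T$ is not invertible — in the non-invertible case the tower of height $\tau$ over $\hat{T}$ need not be measurably isomorphic to $X$, so one cannot naively build $F$ on the tower, whereas the forward-flow formula is unambiguous on $X$ itself.

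For the implication ``$(X,T,\mu)$ weakly mixing $\Rightarrow$ the first return time is aperiodic'' (the direction invoked earlier, in the proof of Lemma~\ref{Pi_lemma}), I would take a non-zero $f \in L^2(\mu_Y)$ with $f \circ \hat{T} = e^{it\tau} f$, $t \in [0, 2\pi)$. Since $|f| \circ \hat{T} = |f|$ and $\hat{T}$ is ergodic with respect to $\mu_Y$, we get $|f| \equiv c$ for a positive constant $c$; in particular $f \in L^\infty(\mu_Y)$, so the function $F$ above lies in $L^\infty(\mu) \subset L^2(\mu)$ with $|F| \equiv c \neq 0$. A short case check — distinguishing $x \in Y$ with $\tau(x) = 1$, $x \in Y$ with $\tau(x) \ge 2$, $x \notin Y$ with $n(x) = 1$, and $x \notin Y$ with $n(x) \ge 2$, using $n(Tx) = \tau(x)-1$ resp.\ $n(Tx) = n(x)-1$ and the relation $f \circ \hat{T} = e^{it\tau} f$ in the first two cases — shows $F \circ T = e^{it} F$ $\mu$-a.e. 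Weak mixing then forces $t = 0$ and $F$ constant, hence $f = F|_Y$ constant; this is aperiodicity of the first return time.

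For the converse, assume the first return time is aperiodic and let $F \in L^2(\mu)$ be non-zero with $F \circ T = e^{it} F$, $t \in [0,2\pi)$; I must show $t=0$ and $F$ constant. As before $|F| \equiv c > 0$ by ergodicity of $T$, and $f := F|_Y$ is a non-zero element of $L^2(\mu_Y)$ with $f \circ \hat{T} = e^{it\tau} f$. If $t \neq 0$ this contradicts aperiodicity; hence $t = 0$, so $F \circ T = F$, and ergodicity of $T$ forces $F$ to be constant. Thus $(X,T,\mu)$ is weakly mixing.

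I expect the only genuinely delicate point to be the construction of $F$ from $f$ in the non-invertible setting and the verification that the forward-flow formula is well defined and truly intertwines $T$ with multiplication by $e^{it}$; the remaining ingredients (ergodicity of $\hat{T}$, $T$-invariance of $|F|$, $\mu$-a.e.\ finiteness of $n(x)$) are routine. One could alternatively phrase the whole argument in terms of transfer operators, via the reformulation $f \circ \hat{T} = e^{it\tau} f \iff \mathcal{L}(e^{it\tau} f) = f$ noted in the preceding remark together with the fact that the transfer operator of $\hat{T}$ is obtained from that of $T$ by summing over return times, but the direct functional argument above seems cleaner and avoids any measurability subtleties.
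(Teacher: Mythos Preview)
Your proof is correct and follows essentially the same route as the paper's: restrict an eigenfunction of $T$ to $Y$ in one direction, and in the other direction extend $f$ to $X$ by the formula $F(x)=e^{-it\,n(x)}f(T^{n(x)}x)$ with $n(x)$ the first hitting time of $Y$ (the paper uses $\tau\ge 1$ rather than your $n\ge 0$, which is only a cosmetic offset). Your extra observation that $|f|$ is $\hat{T}$-invariant, hence constant, so that $F\in L^\infty\subset L^2$, actually fills a small gap the paper leaves implicit.
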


\begin{proof}
Suppose first that the first return time is aperiodic and let $f \in L^2(\mu)$ non-zero and $t \in [0,2\pi)$ such that $f \circ T = e^{it} f$. We easily verify that the restriction $f_Y$ of $f$ to $Y$ satisfies $f_Y \circ \hat{T} = e^{it \tau} f_Y$: $$f_Y(\hat{T} y) = f(T^{\tau(y)} y) = e^{i t \tau(y)} f(y) = e^{it \tau(y)} f_Y(y).$$ $f_Y$ is also non identically zero: otherwise, $f$ would vanish on the set $\cup_{n \ge 0} T^{-n} Y$, which by ergodicity is equal to $X$ mod $\mu$.  Aperiodicity yields that $t = 0$, which means that $f \circ T = f$. Ergodicity implies that $f$ is constant. \\ Conversely, suppose that $(X, T , \mu)$ is weakly mixing and that $f \in L^2(\mu_Y)$ is non identically zero and satisfies $f \circ \hat{T} = e^{it \tau} f$. We first extend $\tau$ on the whole space $X$ as being the first hitting time. By ergodicity, it is well defined $\mu$-a.e. We then define $\tilde{f} \in L^2(\mu)$ by $\tilde{f} = e^{-it \tau} f \circ T^{\tau}$. Since $T^{\tau(x)}x$ belongs to $Y$ for $\mu$-a.e.\ $x \in X$ by definition, $\tilde{f}$ is well-defined. Our assumption on $f$ implies that $\tilde{f}$ and $f$ coincide on $Y$, so that it is non identically zero. \\ Now, we verify that $\tilde{f} \circ T = e^{it} \tilde{f}$. Let $x \in X$ with $\tau(x) > 1$. Since $\tau$ is the {\em first} hitting time, we have $\tau(Tx) = \tau(x) - 1$. Hence, $\tilde{f}(Tx) = e^{-it \tau(Tx)} f(T^{\tau(Tx)}Tx) = e^{it}e^{-it \tau(x)}f(T^{\tau(x)}x) = e^{it}\tilde{f}(x)$. If $\tau(x) = 1$, which implies $Tx \in Y$, we have by definition of $\tilde{f}$ that $\tilde{f}(x) = e^{-it}f(Tx) = e^{-it}\tilde{f}(Tx)$. \\ Weak mixing implies that $t = 0$ and $\tilde{f}$ is constant. Since the restriction of $\tilde{f}$ to $Y$ is $f$, this shows that $f$ is constant, and concludes the proof.
\end{proof}

\end{document}